\newtheorem{theorem}{Theorem}[section] 
\newtheorem{lemma}[theorem]{Lemma}
\newtheorem{corollary}[theorem]{Corollary}
\newtheorem{proposition}[theorem]{Proposition}
\theoremstyle{definition}
\newtheorem{definition}[theorem]{Definition}
\theoremstyle{remark}
\numberwithin{equation}{section}
\begin{document}

\title{Wave Front Sets of Reductive Lie Group Representations}

%    Information for first author
\author{Benjamin Harris}
%    Address of record for the research reported here
\address{Department of Mathematics, Oklahoma State University, Stillwater, Oklahoma 74078}
%    Current address
%\curraddr{Department of Mathematics and Statistics,
%Case Western Reserve University, Cleveland, Ohio 43403}
\email{Benjamin.Harris@math.okstate.edu}
%    \thanks will become a 1st page footnote.
\thanks{The first author was an NSF VIGRE postdoc at LSU while this research was conducted.}

%    Information for second author
\author{Hongyu He}
\address{Department of Mathematics, Louisiana State University, Baton Rouge, Louisiana 70803}
\email{hongyu@math.lsu.edu}
%\thanks{Support information for the second author.}

\author{Gestur \'{O}lafsson}
\address{Department of Mathematics, Louisiana State University, Baton Rouge, Louisiana 70803}
\email{olafsson@math.lsu.edu}
\thanks{The third author was supported by NSF grant 1101337 while this research was conducted.}

%    General info
\subjclass[2010]{22E46, 22E45, 43A85}

\date{March 19, 2015}

%\dedicatory{This paper is dedicated to our advisors.}

\keywords{Wave Front Set, Singular Spectrum, Analytic Wave Front Set, Reductive Lie Group, Induced Representation, Tempered Representation, Branching Problem, Discrete Series, Reductive Homogeneous Space}

\begin{abstract}
If $G$ is a Lie group, $H\subset G$ is a closed subgroup, and $\tau$ is a unitary representation of $H$, then the authors give a sufficient condition on $\xi\in i\mathfrak{g}^*$ to be in the wave front set of $\operatorname{Ind}_H^G\tau$. In the special case where $\tau$ is the trivial representation, this result was conjectured by Howe. If $G$ is a real, reductive algebraic group and $\pi$ is a unitary representation of $G$ that is weakly contained in the regular representation, then the authors give a geometric description of $\operatorname{WF}(\pi)$ in terms of the direct integral decomposition of $\pi$ into irreducibles. Special cases of this result were previously obtained by Kashiwara-Vergne, Howe, and Rossmann. The authors give applications to harmonic analysis problems and branching problems.
\end{abstract}

\maketitle

\section{Introduction}
If $u$ is a distribution on a smooth manifold $X$, then the \emph{wave front set} of $u$, denoted $\operatorname{WF}(u)$, is a closed subset of $iT^*X$ that microlocally measures the smoothness of the distribution $u$ (see Section \ref{sec:def_wf} for a definition). Similarly, if $\zeta$ is a hyperfunction on an analytic manifold $Y$, then the \emph{singular spectrum} of $\zeta$, denoted $\operatorname{SS}(\zeta)$, is a closed subset of $iT^*Y$ that microlocally measures the analyticity of the hyperfunction $\zeta$ (see Section \ref{sec:def_wf} for a definition). The singular spectrum is also called the \emph{analytic wave front set}.

Suppose $G$ is a Lie group, $(\pi,V)$ is a unitary representation of $G$, and $(\cdot,\cdot)$ is the inner product on the Hilbert space $V$. Then the \emph{wave front set of} $\pi$ and the \emph{singular spectrum of} $\pi$ are defined by
$$\operatorname{WF}(\pi)=\overline{\bigcup_{u,v\in V}\operatorname{WF}_e(\pi(g)u,v)},\ \  \operatorname{SS}(\pi)=\overline{\bigcup_{u,v\in V}\operatorname{SS}_e(\pi(g)u,v)}.$$
Here the subscript $e$ means we are only considering the piece of the wave front set (or the singular spectrum) of the matrix coefficient $(\pi(g)u,v)$ in the fiber over the identity in $iT^*G$. 

In the case where $G$ is compact, a notion equivalent to the singular spectrum of a unitary representation was introduced by Kashiwara and Vergne on the top of page 192 of \cite{KV}. This notion was later used by Kobayashi in \cite{Ko2} to prove a powerful sufficient condition for discrete decomposability. Our definition of the wave front set of a representation is equivalent to $i$ times the definition of $\operatorname{WF}^0(\pi)$ first introduced by Howe in \cite{How} (see Proposition \ref{howeusequiv} for the equivalence of the two definitions). The wave front set and singular spectrum of a representation are always closed, invariant cones in $i\mathfrak{g}^*$, the dual of the Lie algebra of $G$. 

Suppose $G$ is a Lie group, $H\subset G$ is a closed subgroup, and $\tau$ is a unitary representation of $H$. Then we may form the unitarily induced representation $\operatorname{Ind}_H^G\tau$, which is a unitary representation of $G$ (See Section \ref{sec:wf_induced} for the definition). Let $\mathfrak{g}$ (resp. $\mathfrak{h}$) denote the Lie algebra of $G$ (resp. $H$), and let $q\colon i\mathfrak{g}^*\rightarrow i\mathfrak{h}^*$ be the pullback of the inclusion. If $S\subset i\mathfrak{h}^*$ is a subset, we will denote 
$$\operatorname{Ind}_H^G S=\overline{\operatorname{Ad}^*(G)\cdot q^{-1}(S)}$$
and we will call this the set induced by $S$ from $i\mathfrak{h}^*$ to $i\mathfrak{g}^*$.

\begin{theorem} \label{inducedintro} Suppose $G$ is a Lie group, $H\subset G$ is a closed subgroup, and $\tau$ is a unitary representation of $H$. Then 
$$\operatorname{WF}(\operatorname{Ind}_H^G\tau)\supset \operatorname{Ind}_H^G\operatorname{WF}(\tau)$$
and 
$$\operatorname{SS}(\operatorname{Ind}_H^G\tau)\supset \operatorname{Ind}_H^G\operatorname{SS}(\tau).$$
\end{theorem}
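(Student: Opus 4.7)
The plan is to exploit the closedness and $\operatorname{Ad}^*(G)$-invariance of $\operatorname{WF}(\operatorname{Ind}_H^G\tau)$ and $\operatorname{SS}(\operatorname{Ind}_H^G\tau)$ to reduce the inclusion to a fiberwise statement: it is enough to show $q^{-1}(\operatorname{WF}(\tau)) \subset \operatorname{WF}(\operatorname{Ind}_H^G\tau)$, and similarly for the singular spectrum. Fix $\eta \in \operatorname{WF}(\tau)$ and $\xi \in q^{-1}(\eta)$; the task then becomes producing $F_1, F_2$ in the Hilbert space of $\operatorname{Ind}_H^G\tau$ whose matrix coefficient $f(g_0) = (\operatorname{Ind}_H^G\tau(g_0) F_1, F_2)$ has $\xi$ in its wave front set at $e$.

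For the local setup I would choose a vector-space complement $\mathfrak{s}$ to $\mathfrak{h}$ in $\mathfrak{g}$ and use the product map $(X, h) \mapsto \exp(X)\, h$ as coordinates on a neighborhood of $e$. Realize $\operatorname{Ind}_H^G\tau$ on square-integrable sections of $G\times_H V_\tau$, so that a section is determined by its values on the local transversal $\Sigma = \exp(\mathfrak{s})$. Picking $u, v \in V_\tau$ witnessing $\eta \in \operatorname{WF}_e((\tau(h)u, v))$, take $F_1, F_2$ supported in the tube $\Sigma\cdot H$ with $F_1|_\Sigma = \phi_1 u$ and $F_2|_\Sigma = \phi_2 v$ for smooth cutoffs $\phi_i$ nonzero at $e$. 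The matrix coefficient then takes the form
$$f(g_0) = \int_\Sigma \bigl(\tau(h(g_0,\sigma))^{-1} u,\, v\bigr)_{V_\tau}\, \phi_1\bigl(\sigma'(g_0,\sigma)\bigr)\overline{\phi_2(\sigma)}\, d\sigma,$$
where $g_0^{-1}\sigma = \sigma'(g_0,\sigma)\cdot h(g_0,\sigma)$ is the unique $\Sigma\cdot H$ factorization.

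For $g_0 = h_0 \in H$ near $e$, and with $\mathfrak{s}$ chosen $\operatorname{Ad}(H)$-stable where possible, one reads off $h(h_0,\sigma) = h_0^{-1}$, giving $f|_H(h_0) = (\tau(h_0)u, v)\,\psi(h_0)$ with $\psi$ smooth and $\psi(e)\neq 0$. By H\"ormander's pullback theorem for wave front sets applied to the inclusion $\iota\colon H\hookrightarrow G$, any $\eta \in \operatorname{WF}_e(f|_H) \supset \operatorname{WF}_e((\tau(h)u, v))$ must lift to some $\xi_0\in\operatorname{WF}_e(f)$ with $q(\xi_0)=\eta$, placing at least one point of $q^{-1}(\eta)$ into $\operatorname{WF}(\operatorname{Ind}_H^G\tau)$.

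The main obstacle is filling out the entire affine fiber $q^{-1}(\eta)$: the restriction-plus-pullback step produces only a single covector over each $\eta$, and the $\operatorname{Ad}^*(G)$-orbit of $\xi_0$ need not exhaust $q^{-1}(\eta)$ (already visible when $G$ is abelian, where the orbits are points). To reach a prescribed $\xi = \xi_s + \eta$ in the fiber I expect to modulate the transversal parts of the test vectors by a symmetric pair of oscillatory factors $e^{\pm i\langle \xi_s,\log\sigma\rangle/2}$ and then analyze the resulting oscillatory integral microlocally, using the factorization $(g_0,\sigma)\mapsto(\sigma',h)$ to transfer the prescribed $\mathfrak{s}^*$-component into the wave front set while the $\tau$-matrix coefficient continues to supply the $\mathfrak{h}^*$-component $\eta$. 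The singular spectrum statement follows the same outline, with analytic cutoffs (or Sato hyperfunction representatives) in place of smooth ones and the analytic version of the pullback theorem in place of H\"ormander's.
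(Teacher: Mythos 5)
Your reduction to showing $q^{-1}(\operatorname{WF}(\tau))\subset\operatorname{WF}(\operatorname{Ind}_H^G\tau)$ and your local model (transversal $\Sigma=\exp(\mathfrak{s})$, sections determined by their values on $\Sigma$, restriction of the matrix coefficient to $H$ recovering $(\tau(h_0)u,v)\psi(h_0)$) match the geometry of the paper's argument. But the obstacle you flag at the end --- producing the \emph{entire} affine fiber $q^{-1}(\eta)$ rather than a single unlocated covector over $\eta$ --- is the whole content of the theorem, and the fix you propose does not work. Multiplying $F_1,F_2$ by fixed oscillatory factors $e^{\pm i\langle\xi_s,\log\sigma\rangle/2}$ only translates the Fourier transform of the resulting matrix coefficient by a bounded amount (of order $|\xi_s|$), and the wave front set is a conic, asymptotic notion: rapid decay of $\mathcal{F}[\varphi f](t\eta)$ as $t\to\infty$ in an open cone is unaffected by any fixed translation of the frequency variable. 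So the modulated matrix coefficient has exactly the same wave front set at $e$ as the unmodulated one, and you are back to a single covector per $\eta$. To move the transverse component you would have to let the modulation frequency grow with $t$, i.e.\ change the vectors along the limit, which is not available for a single matrix coefficient; and since $\operatorname{WF}(\pi)$ is the closure of a union of closed \emph{cones}, taking closures over a sequence of modulations does not help either, for the same reason. (A secondary issue: your restriction-to-$H$ step invokes H\"ormander's pullback theorem, which requires $\operatorname{WF}_e(f)$ to miss the conormal directions $q^{-1}(0)$; this is not verified, and is exactly the sort of hypothesis one cannot expect here, since for $\tau=\mathbbm{1}$ the theorem itself asserts that $q^{-1}(0)$ lies in $\operatorname{WF}(\operatorname{Ind}_H^G\mathbbm{1})$.)

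The paper resolves the fiber problem by replacing your smooth cutoffs $\phi_i$ with Dirac masses, i.e.\ by working with the distribution vectors $\delta_H(w_i,\omega_i)$ supported at the base point of $G/H$. This is legitimate because of Proposition \ref{wfdistvec}: every distribution vector has the form $Du$ with $D\in\mathcal{U}(\mathfrak{g})$ and $u\in V$, and differential operators can only shrink wave front sets, so generalized matrix coefficients compute the same $\operatorname{WF}(\pi)$ while giving you much more singular test objects. Lemma \ref{identityvector} then computes the generalized matrix coefficient of the two Dirac distribution vectors exactly: in product coordinates $S\times\mathfrak{h}$ it is a nonvanishing analytic function times $\delta_0\otimes(\tau(\exp Y)w_1,w_2)$. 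The transverse factor $\delta_0$ contributes all of $iS^*$ to the wave front set (its Fourier transform is constant, hence decays in no direction), so the tensor-product formula yields $iS^*\times\operatorname{WF}_e(\tau(h)w_1,w_2)=q^{-1}(\operatorname{WF}_e(\tau(h)w_1,w_2))$ in one stroke --- precisely the full fiber you could not reach. If you want to salvage your approach, the missing ingredient is this passage to distribution vectors (or an equivalent quantitative lower bound uniform over an approximate identity shrinking to $\delta_\Sigma$); the oscillatory modulation should be discarded.
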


When $\tau=\mathbbm{1}$ is the trivial representation, we have $\operatorname{WF}(\mathbbm{1})=\{0\}$ and we obtain
$$\operatorname{WF}(\operatorname{Ind}_H^G \mathbbm{1})\supset \overline{\operatorname{Ad}^*(G)\cdot i(\mathfrak{g}/\mathfrak{h})^*}\supset i(\mathfrak{g}/\mathfrak{h})^*.$$
This special case was conjectured by Howe on page 128 of \cite{How}. In the case where $G$ is compact, the equality $\operatorname{SS}(\operatorname{Ind}_H^G\tau)=\operatorname{Ind}_H^G\operatorname{SS}(\tau)$ was obtained by Kashiwara and Vergne in Proposition 5.4 of \cite{KV}. In the case where $G$ is a connected semisimple Lie group with finite center, $H=P=MAN\subset G$ is a parabolic subgroup, and $\tau$ is an irreducible, unitary representation of $MA$ extended trivially to $P$, the equality $\operatorname{WF}(\operatorname{Ind}_P^G\tau)=\operatorname{Ind}_P^G\operatorname{WF}(\tau)$ follows from work of Barbasch-Vogan (see page 39 of \cite{BV}) together with the principal results of \cite{R5}, \cite{SV}. Note that when $\Gamma\subset G$ is a discrete subgroup of a unimodular group $G$, we obtain $$\operatorname{WF}(L^2(G/\Gamma))=\operatorname{SS}(L^2(G/\Gamma))=i\mathfrak{g}^*.$$
%To prove this: show (easy exercise) that WF(\tau) as an MA rep is = to WF(\tau) as a P rep; use that P=MAN, N normal, acts trivially on \tau. Then use expression for AS(\Theta_{\nu}) on page 39 of B-V and use Rossmann's WF=AC twice, for MA and for G.

Let $G$ be a real, reductive algebraic group. The irreducible representations occurring in the direct integral decomposition of $L^2(G)$ are called irreducible, tempered representations of $G$; we denote by $\widehat{G}_{\text{temp}}$ the subspace of the unitary dual consisting of these representations. This subspace is closed in the Fell topology on the unitary dual. To each irreducible tempered representation $\sigma$ of $G$, Duflo and Rossmann associated a finite union of coadjoint orbits $\mathcal{O}_{\sigma}\subset i\mathfrak{g}^*$ \cite{Du},\cite{R1},\cite{R2}. In the generic case, when $\sigma$ has regular infinitesimal character, $\mathcal{O}_{\sigma}$ is a single coadjoint orbit.

If $G$ is a real, reductive algebraic group and $(\pi,V)$ is a unitary representation of $G$, then we say $\pi$ is \emph{weakly contained in the regular representation} if $\operatorname{supp} \pi\subset \widehat{G}_{\text{temp}}$. 
For such a representation $\pi$, we define the \emph{orbital support} of $\pi$ to be
$$\mathcal{O}\operatorname{-}\operatorname{supp}\pi:=\bigcup_{\sigma\in \operatorname{supp} \pi}\mathcal{O}_{\sigma}.$$

If $W$ is a finite-dimensional vector space and $S\subset W$, then we define the \emph{asymptotic cone} of $S$ to be
$$\operatorname{AC}(S):=\{\xi\in W|\ \mathcal{C}\ \text{an\ open\ cone\ containing}\ \xi\Rightarrow \mathcal{C}\cap S\ \text{is\ unbounded}\}\cup \{0\}.$$
One notes that $\operatorname{AC}(S)$ is a closed cone. 

\begin{theorem} \label{regularintro} If $G$ is a real, reductive algebraic group and $\pi$ is weakly contained in the regular representation of $G$, then
$$\operatorname{SS}(\pi)=\operatorname{WF}(\pi)=\operatorname{AC}\left(\mathcal{O}\operatorname{-}\operatorname{supp}\pi\right).$$
\end{theorem}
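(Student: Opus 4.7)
The plan is to establish two inclusions, $\operatorname{AC}(\mathcal{O}\operatorname{-}\operatorname{supp}\pi)\subset\operatorname{WF}(\pi)$ and $\operatorname{SS}(\pi)\subset\operatorname{AC}(\mathcal{O}\operatorname{-}\operatorname{supp}\pi)$; combined with the universal containment $\operatorname{WF}(\pi)\subset\operatorname{SS}(\pi)$, these force the triple equality. A preliminary step handles a single irreducible tempered $\sigma$: by the Langlands/Knapp-Zuckerman classification, $\sigma$ is a direct summand of $\operatorname{Ind}_P^G\tau$ for some cuspidal parabolic $P=MAN$ and some (limit of) discrete series $\tau$ of $MA$ extended trivially across $N$. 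Direct matrix-coefficient analysis together with Rossmann's character formula yields $\operatorname{WF}(\tau)=\operatorname{AC}(\mathcal{O}_\tau)$ for such $\tau$, and Theorem \ref{inducedintro} then produces $\operatorname{WF}(\sigma)\supset\operatorname{Ind}_P^G\operatorname{WF}(\tau)$; a coadjoint-orbit calculation identifies the right-hand side with $\operatorname{AC}(\mathcal{O}_\sigma)$.

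For the lower bound on $\operatorname{WF}(\pi)$, any $\sigma\in\operatorname{supp}\pi$ is weakly contained in $\pi$, and by a standard weak-containment argument (or by working directly with the distributional character $\Theta_\sigma$, following Howe) this yields $\operatorname{WF}(\sigma)\subset\operatorname{WF}(\pi)$, hence $\bigcup_{\sigma\in\operatorname{supp}\pi}\operatorname{AC}(\mathcal{O}_\sigma)\subset\operatorname{WF}(\pi)$. To upgrade this to $\operatorname{AC}\bigl(\bigcup_\sigma\mathcal{O}_\sigma\bigr)\subset\operatorname{WF}(\pi)$ I would use closedness and homogeneity of $\operatorname{WF}(\pi)$ via a diagonal argument: any $\xi\in\operatorname{AC}(\mathcal{O}\operatorname{-}\operatorname{supp}\pi)$ is a limit $\xi=\lim t_n\xi_n$ with $t_n\to 0^+$ and $\xi_n\in\mathcal{O}_{\sigma_n}$ for suitable $\sigma_n\in\operatorname{supp}\pi$; each $t_n\xi_n$ lies in $\operatorname{AC}(\mathcal{O}_{\sigma_n})\subset\operatorname{WF}(\pi)$, and closedness passes to the limit.

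For the upper bound, I would use the direct integral decomposition $\pi=\int^{\oplus}H(\sigma)\otimes\sigma\,d\mu(\sigma)$, available since $G$ is Type I, and write matrix coefficients as
$$(\pi(g)u,v)=\int (\sigma(g)u_\sigma,v_\sigma)\,d\mu(\sigma).$$
Harish-Chandra's Schwartz-space Plancherel theorem controls the integrand uniformly, and combined with the Rossmann-Schmid-Vilonen identification $\operatorname{SS}_e(\Theta_\sigma)=\operatorname{AC}(\mathcal{O}_\sigma)$ for tempered $\sigma$, together with a Bros-Iagolnitzer/FBI criterion for the singular spectrum, one obtains a microlocal decay estimate for the FBI transform of $\varphi\cdot(\pi(g)u,v)$, exponential in any closed conic direction missing $\operatorname{AC}(\mathcal{O}\operatorname{-}\operatorname{supp}\pi)$ (here $\varphi$ is an analytic cutoff near $e$). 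The main obstacle will be making this decay estimate uniform enough in $\sigma$ to survive integration against $d\mu(\sigma)$ while preserving analyticity; the singular spectrum is considerably more fragile than the smooth wave front set under direct integrals, and I expect the bulk of the technical effort to concentrate in this step.
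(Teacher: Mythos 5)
Your overall architecture (prove $\operatorname{AC}(\mathcal{O}\operatorname{-}\operatorname{supp}\pi)\subset\operatorname{WF}(\pi)$ and $\operatorname{SS}(\pi)\subset\operatorname{AC}(\mathcal{O}\operatorname{-}\operatorname{supp}\pi)$, then close the loop with $\operatorname{WF}\subset\operatorname{SS}$) is exactly the paper's, and your observation that weak containment of $\sigma$ in $\pi$ gives $\operatorname{WF}(\sigma)\subset\operatorname{WF}(\pi)$ is fine (it follows from Howe's operator-norm criterion). But the ``diagonal argument'' upgrading $\bigcup_{\sigma}\operatorname{AC}(\mathcal{O}_{\sigma})\subset\operatorname{WF}(\pi)$ to $\operatorname{AC}\bigl(\bigcup_{\sigma}\mathcal{O}_{\sigma}\bigr)\subset\operatorname{WF}(\pi)$ is false as stated: if $\xi=\lim t_n\xi_n$ with $\xi_n\in\mathcal{O}_{\sigma_n}$, the point $t_n\xi_n$ does \emph{not} lie in $\operatorname{AC}(\mathcal{O}_{\sigma_n})$ --- the asymptotic cone of a set records only its directions to infinity and does not contain (rescalings of) the set itself. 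The discrepancy between $\bigcup_{\sigma}\operatorname{AC}(\mathcal{O}_{\sigma})$ and $\operatorname{AC}(\bigcup_{\sigma}\mathcal{O}_{\sigma})$ is not a technicality but the entire content of the lower bound: for $G$ compact every $\operatorname{AC}(\mathcal{O}_{\sigma})=\{0\}$ while $\operatorname{WF}(L^2(G))=i\mathfrak{g}^*$, and already for $G=\operatorname{SL}(2,\mathbb{R})$ the paper's Section 8.1 computes $\operatorname{WF}(\sigma_n^+)=\mathcal{N}^+$ for every holomorphic discrete series but $\operatorname{WF}(\bigoplus_n\sigma_n^+)=\overline{(i\mathfrak{g}^*_{\text{ell}})^+}$, strictly larger than the union $\mathcal{N}^+$. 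No argument that only uses the wave front sets of the individual constituents can produce the lower bound; one must exploit the superposition. The paper does this by forming the wave packet of characters $\int\Theta_{\sigma}\,d\mu_{\pi}$, realizing it (after damping by $(I+\Omega_K)^{-N}$) as $\operatorname{Tr}(\pi(g)T)$ for a trace class $T$ so that Howe's trace-class definition applies, and then bounding the Gaussian-localized Fourier transform of $\int\mathcal{O}_{\sigma}\,d\mu_{\pi}$ from below along a sequence $t_m\eta_m\in\mathcal{O}\operatorname{-}\operatorname{supp}\pi$, via volume estimates comparing the canonical and Euclidean measures on the orbits. Your preliminary step also has a soft spot: Theorem \ref{inducedintro} bounds $\operatorname{WF}(\operatorname{Ind}_P^G\tau)$ from below, but $\sigma$ is only one summand/quotient of the induced representation, and distinct summands can have distinct wave front sets (again $\sigma^+$ versus $\sigma^-$), so the formula $\operatorname{WF}(\sigma)=\operatorname{AC}(\mathcal{O}_{\sigma})$ for irreducibles does not follow this way; it is, however, available from Rossmann and Schmid--Vilonen, as the paper notes --- though, per the above, it does not help with the general lower bound anyway.

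For the upper bound your sketch is closer in spirit to the paper, and you correctly identify uniformity in $\sigma$ as the obstacle, but the mechanism you propose (integrating the per-$\sigma$ identity $\operatorname{SS}_e(\Theta_{\sigma})=\operatorname{AC}(\mathcal{O}_{\sigma})$ against $d\mu$) runs into the dual version of the same problem: the singular spectrum of a superposition is not controlled by the singular spectra of the pieces. The paper instead applies Cauchy--Schwarz and Howe's Hilbert--Schmidt identity to reduce everything to the single distribution $u=\int\theta_{\sigma}|u_{\sigma}|^2\,d\mu_{\pi}$, whose Fourier transform is supported in $\mathcal{O}\operatorname{-}\operatorname{supp}\pi$; H\"ormander's Lemma 8.4.17 then gives $\operatorname{SS}_0(u)\subset\operatorname{AC}(\mathcal{O}\operatorname{-}\operatorname{supp}\pi)$ directly from the support of the Fourier transform, with no per-$\sigma$ microlocal input, and the remaining work (the paper's Lemma \ref{boundaryvalues}) is a uniform boundary-value estimate in the group translation parameter. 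I would encourage you to rebuild both halves around superposed characters and orbital measures rather than around the irreducible constituents.
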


When $G$ is compact and connected, an equivalent formula for $\operatorname{SS}(\pi)$ was obtained by Kashiwara and Vergne in Corollary 5.10 of \cite{KV}. Using similar ideas, Howe obtained the same formula for $\operatorname{WF}(\pi)$ when $G$ is compact in Proposition 2.3 of \cite{How}. Related results concerning wave front sets and compact groups $G$ appeared in \cite{HHK}. Finally, one can deduce the above formula for $\operatorname{WF}(\pi)$ when $\pi$ is irreducible from Theorems B and C of Rossmann's paper \cite{R5}. %(\textbf{Check again})

Note that when $K\subset G$ is a maximal compact subgroup of a semisimple Lie group, it is known that $L^2(G/K)$ is a direct integral of principal series representations (see \cite{HC58a}, \cite{HC58b}, \cite{HC65}, \cite{He70}, \cite{He73} for the original papers; see Section 1 of \cite{OS} for an expository introduction). Combining this knowledge with Theorem \ref{regularintro}, we obtain 
$$\operatorname{WF}(L^2(G/K))=\operatorname{SS}(L^2(G/K))=i\overline{\mathfrak{g}^*_{\text{hyp}}}=\overline{\operatorname{Ad}^*(G)\cdot i(\mathfrak{g}/\mathfrak{k})^*}.$$
Here $\mathfrak{g}^*_{\text{hyp}}$ denotes the set of hyperbolic elements in $\mathfrak{g}^*$.
\bigskip

Next, we consider two classes of applications of the above Theorems. First, suppose $G$ is a real, semisimple algebraic group and $H\subset G$ is a reductive subgroup. In Theorem 4.1 of \cite{BK}, Benoist and Kobayashi give a concrete and computable necessary and sufficient condition for $\operatorname{Ind}_H^G \mathbbm{1}=L^2(G/H)$ to be weakly contained in the regular representation. Putting together Theorems \ref{inducedintro} and \ref{regularintro}, we obtain the following Corollary.

\begin{corollary} \label{BKintro} If $G$ is a real, reductive algebraic group, $H\subset G$ is a closed subgroup, and $L^2(G/H)$ is weakly contained in the regular representation, then 
$$\operatorname{AC}\left(\mathcal{O}\operatorname{-}\operatorname{supp}L^2(G/H)\right)\supset \overline{\operatorname{Ad}^*(G)\cdot i(\mathfrak{g}/\mathfrak{h})^*}.$$
\end{corollary}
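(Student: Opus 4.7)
The plan is to obtain the corollary by chaining Theorem \ref{inducedintro} and Theorem \ref{regularintro}. First I would specialize Theorem \ref{inducedintro} to the case $\tau = \mathbbm{1}$, the trivial representation of $H$, so that $\operatorname{Ind}_H^G\mathbbm{1}$ is (up to the usual half-density normalization when $G/H$ carries no $G$-invariant measure) the regular representation on $L^2(G/H)$. Since $\operatorname{WF}(\mathbbm{1}) = \{0\}$ and since the preimage of $0\in i\mathfrak{h}^*$ under the pullback $q\colon i\mathfrak{g}^* \to i\mathfrak{h}^*$ is exactly the annihilator $i(\mathfrak{g}/\mathfrak{h})^*$, the definition of the induced set specializes to $\operatorname{Ind}_H^G\{0\} = \overline{\operatorname{Ad}^*(G)\cdot i(\mathfrak{g}/\mathfrak{h})^*}$, and Theorem \ref{inducedintro} yields
$$\operatorname{WF}(L^2(G/H)) \;\supset\; \overline{\operatorname{Ad}^*(G)\cdot i(\mathfrak{g}/\mathfrak{h})^*}.$$

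Next I would use the hypothesis that $L^2(G/H)$ is weakly contained in the regular representation of $G$ to apply Theorem \ref{regularintro} with $\pi = L^2(G/H)$. This gives the identity
$$\operatorname{WF}(L^2(G/H)) \;=\; \operatorname{AC}\bigl(\mathcal{O}\operatorname{-}\operatorname{supp}L^2(G/H)\bigr).$$
Substituting into the previous inclusion immediately yields the claim of the corollary.

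There is no real obstacle here, since the statement is a formal consequence of the two main theorems of the introduction. The only bookkeeping point is to verify that the standard identification $\operatorname{Ind}_H^G\mathbbm{1} \cong L^2(G/H)$ is indeed the one used in Theorem \ref{inducedintro}, and that the resulting wave front set on the $G$-side agrees with the one appearing in Theorem \ref{regularintro}; both are routine and are presumably handled once and for all in the relevant sections on induced representations and on the definition of $\operatorname{WF}(\pi)$.
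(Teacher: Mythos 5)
Your proposal is correct and is exactly the paper's argument: the authors obtain Corollary \ref{BKintro} by specializing Theorem \ref{inducedintro} to $\tau=\mathbbm{1}$ (noting $\operatorname{WF}(\mathbbm{1})=\{0\}$ and $q^{-1}(0)=i(\mathfrak{g}/\mathfrak{h})^*$) and then substituting the equality of Theorem \ref{regularintro} for $\operatorname{WF}(L^2(G/H))$. No gaps.
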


From Example 5.6 of \cite{BK}, we see that if $G=\operatorname{SO}(p,q)$ and $H=\prod_{i=1}^r \operatorname{SO}(p_i,q_i)$ with $p=\sum_{i=1}^r p_i$, $q=\sum_{i=1}^r q_i$, and $2(p_i+q_i)\leq p+q+2$ whenever $p_iq_i\neq 0$, then $L^2(G/H)$ is weakly contained in the regular representation. To the best of the authors' knowledge, Plancherel formulas are not known for the vast majority of these cases. An elementary computation shows that if in addition, $2p_i\leq p+1$ and $2q_i\leq q+1$ for every $i$ and $p+q>2$, then $$i\mathfrak{g}^*=\overline{\operatorname{Ad}^*(G)\cdot i(\mathfrak{g}/\mathfrak{h})^*}.$$
Corollary \ref{BKintro} now implies that $\operatorname{supp} L^2(G/H)$ is ``asymptotically equivalent to'' $\operatorname{supp}L^2(G)$ (we make this notion precise in Section \ref{sec:examples_apps}). In particular, suppose $p$ and $q$ are not both odd and $\mathcal{F}$ is one of the families of discrete series of $G=\operatorname{SO}(p,q)$ associated to a conjugacy class of Weyl chambers in the dual of a fundamental Cartan subalgebra of $\mathfrak{g}$. Then 
$$\operatorname{Hom}_G(\sigma,L^2(G/H))\neq \{0\}$$
for infinitely many different $\sigma\in \mathcal{F}$ (more details appear in Section \ref{sec:examples_apps}). 

In passing, we recall that Kobayashi previously obtained some partial results concerning the discrete spectrum of $L^2(G/H)$ for certain $G$ and $H$ when $G$ is reductive \cite{Ko5}. While there is some small amount of overlap between this paper and \cite{Ko5}, most of the results in each paper cannot be deduced from the results of the other paper.

Next, we utilize Theorem \ref{regularintro} together with an analogue of Theorem \ref{inducedintro} for restriction due to Howe in order to analyze branching problems for discrete series representations. First, we recall Howe's result (see page 124 of \cite{How}). If $\pi$ is a unitary representation of a Lie group $G$, $H\subset G$ is a closed subgroup, and $q:i\mathfrak{g}^*\rightarrow i\mathfrak{h}^*$ is the pullback of the inclusion, then $$\operatorname{WF}(\pi|_H)\supset q(\operatorname{WF}(\pi)).$$

\begin{corollary} \label{resintro1} Suppose $G$ is a real, reductive algebraic group, suppose $H\subset G$ is a closed reductive algebraic subgroup, and suppose $\pi$ is a discrete series representation of $G$. Let $\mathfrak{g}$ (resp. $\mathfrak{h}$) denote the Lie algebra of $G$ (resp. $H$), and let $q\colon i\mathfrak{g}^*\rightarrow i\mathfrak{h}^*$ be the pullback of the inclusion. Then 
$$\operatorname{AC}\left(\mathcal{O}\operatorname{-}\operatorname{supp}(\pi|_H)\right)\supset q(\operatorname{WF}(\pi))=q(\operatorname{AC}(\mathcal{O}_{\pi})).$$
\end{corollary}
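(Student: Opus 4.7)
The plan is to chain three results: Theorem \ref{regularintro} applied to $\pi$ itself, Howe's restriction inequality for wave front sets (quoted just above the corollary), and Theorem \ref{regularintro} applied to the restriction $\pi|_H$.

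First, a discrete series representation is irreducible and tempered, so $\operatorname{supp}\pi=\{\pi\}$ and $\mathcal{O}\operatorname{-}\operatorname{supp}\pi=\mathcal{O}_\pi$. Theorem \ref{regularintro} applied to $\pi$ then gives $\operatorname{WF}(\pi)=\operatorname{AC}(\mathcal{O}_\pi)$, which is the second equality $q(\operatorname{WF}(\pi))=q(\operatorname{AC}(\mathcal{O}_\pi))$ in the statement. Second, Howe's inequality gives $\operatorname{WF}(\pi|_H)\supset q(\operatorname{WF}(\pi))$. To conclude, I want to replace $\operatorname{WF}(\pi|_H)$ by $\operatorname{AC}(\mathcal{O}\operatorname{-}\operatorname{supp}(\pi|_H))$; this requires applying Theorem \ref{regularintro} to $\pi|_H$, which in turn requires $\pi|_H$ to be weakly contained in the regular representation of $H$.

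This temperedness of the restriction is the main obstacle. My approach would be to exploit that a discrete series has matrix coefficients that are strongly $L^p$ on $G$ for some $p<2$, and to combine this with volume-growth estimates comparing $H$ and $G$ in order to bound matrix coefficients of $\pi|_H$ by a multiple of Harish-Chandra's spherical function on $H$, which is the standard $L^2$-characterization of temperedness. Alternatively, one could invoke an off-the-shelf result asserting that the restriction of a tempered representation of a real reductive $G$ to a closed reductive algebraic subgroup $H$ remains tempered. Once $\pi|_H$ is known to be weakly contained in the regular representation of $H$, Theorem \ref{regularintro} identifies $\operatorname{WF}(\pi|_H)$ with $\operatorname{AC}(\mathcal{O}\operatorname{-}\operatorname{supp}(\pi|_H))$, and combining with Howe's inequality yields the desired inclusion $\operatorname{AC}(\mathcal{O}\operatorname{-}\operatorname{supp}(\pi|_H))\supset q(\operatorname{WF}(\pi))$.
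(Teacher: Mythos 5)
Your proposal follows exactly the paper's argument: Theorem \ref{regularintro} applied to $\pi$ (giving $\operatorname{WF}(\pi)=\operatorname{AC}(\mathcal{O}_{\pi})$) and again to $\pi|_H$, linked by Howe's restriction inequality, with the temperedness of $\pi|_H$ as the only nontrivial input. For that input the paper simply cites Theorem 3 of \cite{OV}, so your second alternative of invoking a known result is precisely what is done (the quickest justification being that $\pi|_H\subset L^2(G)|_H$, which is a multiple of the regular representation of $H$); the $L^p$/volume-growth route you sketch first is unnecessary and would take more work to make rigorous.
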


Let $S$ be an exponential, solvable Lie group, let $T\subset S$ be a closed subgroup, and let $q\colon i\mathfrak{s}^*\rightarrow i\mathfrak{t}^*$ be the pullback of the inclusion of Lie algebras. Every irreducible, unitary representation $\pi\in \widehat{S}$ (resp. $\sigma\in \widehat{T}$) can be associated to a coadjoint orbit $\mathcal{O}_{\pi}$ (resp. $\mathcal{O}_{\sigma}$). Fujiwara proved that $\sigma$ occurs in the decomposition of $\pi|_H$ into irreducibles iff $\mathcal{O}_{\sigma}\subset q(\mathcal{O}_{\pi})$ \cite{Fu}. %Unfortunately, Fujiwara's statement is not true in the reductive case. 
The above Corollary can be viewed as (half of) an asympototic version of Fujiwara's statement for reductive groups.

%This Corollary follows directly from Theorem \ref{regularintro}, Proposition 1.2 of \cite{How}, and the fact that the restriction of a discrete series to a reductive subgroup is weakly contained in the regular representation. %(\textbf{Discussion of Duflo's Conjecture}). 
We take note of a special case of Corollary \ref{resintro1} that may be of particular interest.

\begin{corollary} \label{resintro2} Suppose $G$ is a real, reductive algebraic group, $H\subset G$ is a reductive algebraic subgroup, and $\pi$ is a discrete series representation of $G$. Let $\mathfrak{g}$ (resp. $\mathfrak{h}$) denote the Lie algebra of $G$ (resp. $H$), and let $q\colon i\mathfrak{g}^*\rightarrow i\mathfrak{h}^*$ be the pullback of the inclusion. If $\pi|_H$ is a Hilbert space direct sum of irreducible representations of $H$, then 
$$q(\operatorname{WF}(\pi))\subset i\overline{\mathfrak{h}^*_{\text{ell}}}.$$
Here $i\mathfrak{h}^*_{\text{ell}}\subset i\mathfrak{h}^*$ denotes the subset of elliptic elements. 
%On the other hand, suppose $T\subset H$ is a compact Cartan subgroup, and $C\subset \mathfrak{t}^*$ is an open Weyl chamber in the dual of the Lie algebra of $T$. If $$q(\operatorname{WF}(\pi))\cap C\neq \{0\},$$
%then $\pi|_H$ must contain infinitely many discrete series representations whose Harish-Chandra parameters lie in $C$.
\end{corollary}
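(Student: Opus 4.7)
My plan is to apply Corollary \ref{resintro1} and then invoke Kobayashi's theory of admissible restrictions to control the orbital support. Since Corollary \ref{resintro1} already gives
$$q(\operatorname{WF}(\pi)) \subset \operatorname{AC}\!\left(\mathcal{O}\operatorname{-}\operatorname{supp}(\pi|_H)\right),$$
it will suffice to show that the orbital support of $\pi|_H$ lies in $i\overline{\mathfrak{h}^*_{\text{ell}}}$.

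Writing the hypothesized decomposition as $\pi|_H = \bigoplus_i m_i \sigma_i$ with each $\sigma_i$ irreducible, my main step will be to prove that $\mathcal{O}_{\sigma_i} \subset i\overline{\mathfrak{h}^*_{\text{ell}}}$ for every $i$. For this I plan to invoke Kobayashi's admissible restriction theorem \cite{Ko2}: because $\pi$ is a discrete series of $G$ (in particular an $A_\mathfrak{q}(\lambda)$-module with $\lambda$ in the good range) and $\pi|_H$ is discretely decomposable, each irreducible summand $\sigma_i$ is itself cohomologically induced of $A_{\mathfrak{q}'}(\lambda')$ type for $H$. Such representations are attached under the orbit correspondence to coadjoint orbits of the form $\operatorname{Ad}^*(H)\cdot(\lambda' + \rho(\mathfrak{u}'))$ with $\lambda'$ elliptic, and these orbits lie in the closure of $i\mathfrak{h}^*_{\text{ell}}$.

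Once the orbit-theoretic step is in place, I will finish with a purely formal computation: $\mathcal{O}\operatorname{-}\operatorname{supp}(\pi|_H) = \bigcup_i \mathcal{O}_{\sigma_i} \subset i\overline{\mathfrak{h}^*_{\text{ell}}}$, and since $i\overline{\mathfrak{h}^*_{\text{ell}}}$ is a closed cone in $i\mathfrak{h}^*$, the elementary observation that $\operatorname{AC}(S) \subset C$ whenever $S$ is contained in a closed cone $C$ immediately gives $\operatorname{AC}\!\left(\mathcal{O}\operatorname{-}\operatorname{supp}(\pi|_H)\right) \subset i\overline{\mathfrak{h}^*_{\text{ell}}}$, and the corollary follows. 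The hard part will be that middle step: the passage from discrete decomposability to the orbit-theoretic conclusion that each $\mathcal{O}_{\sigma_i}$ is elliptic does not follow from the wave front formalism alone — indeed, irreducible principal series representations are trivially discretely decomposable but have non-elliptic orbits — and is precisely the structural input supplied by Kobayashi's theory.
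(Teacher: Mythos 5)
Your overall architecture is exactly the paper's: reduce via Corollary \ref{resintro1} to showing $\operatorname{AC}\bigl(\mathcal{O}\operatorname{-}\operatorname{supp}(\pi|_H)\bigr)\subset i\overline{\mathfrak{h}^*_{\text{ell}}}$, show each summand's orbit is elliptic, and finish with the (correct) formal observation that the asymptotic cone of a subset of a closed cone lies in that cone. You also correctly identify that the whole weight of the argument sits on the structural claim about the summands $\sigma_i$, not on the wave front formalism.

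The gap is in how you justify that structural claim. The statement you invoke --- that every irreducible summand of a discretely decomposable restriction of an $A_{\mathfrak{q}}(\lambda)$-module is itself of $A_{\mathfrak{q}'}(\lambda')$ type --- is not what \cite{Ko2} proves (that paper gives a \emph{sufficient condition} for discrete decomposability via asymptotic $K$-support, and says nothing about the structure of the summands), and in the generality you state it, it is a conjecture of Kobayashi rather than a theorem. There is also a mismatch of categories: your hypothesis is a Hilbert-space direct sum decomposition of the unitary representation $\pi|_H$, whereas the $A_{\mathfrak{q}'}(\lambda')$ machinery concerns algebraic discrete decomposability of the underlying Harish-Chandra module, and passing between the two requires additional argument; moreover, even granted an $A_{\mathfrak{q}'}(\lambda')$ structure, you would still need to identify the cohomological-induction orbit with the Duflo--Rossmann orbit $\mathcal{O}_{\sigma_i}$ used to define the orbital support in this paper. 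The input the paper actually uses is both weaker and available: since $\pi$ is a discrete series, $\pi|_H$ is weakly contained in the regular representation of $H$ (Theorem 3 of \cite{OV}), and by Corollary 8.7 of \cite{Ko5} only \emph{discrete series} of $H$ can occur as discrete summands of $\pi|_H$; each such discrete series corresponds under Rossmann's parametrization \cite{R1} to a (regular) elliptic coadjoint orbit, so $\mathcal{O}\operatorname{-}\operatorname{supp}(\pi|_H)\subset i\overline{\mathfrak{h}^*_{\text{ell}}}$. Substituting that citation for your middle step makes your argument coincide with the paper's.
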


Let $G$ be a real, reductive algebraic group with Lie algebra $\mathfrak{g}$, let $K\subset G$ be a maximal compact subgroup with Lie algebra $\mathfrak{k}$ and complexification $K_{\mathbb{C}}$, and let $\mathcal{N}(\mathfrak{g}_{\mathbb{C}}/\mathfrak{k}_{\mathbb{C}})^*$ denote the set of nilpotent elements of $\mathfrak{g}_{\mathbb{C}}^*$ in $(\mathfrak{g}_{\mathbb{C}}/\mathfrak{k}_{\mathbb{C}})^*$. In \cite{V}, Vogan introduced the \emph{associated variety} of an irreducibe, unitary representation $\pi\in \widehat{G}$, denoted $\operatorname{AV}(\pi)$. It is a closed, $K$ invariant subset of $\mathcal{N}(\mathfrak{g}_{\mathbb{C}}/\mathfrak{k}_{\mathbb{C}})^*$. For an irreducible, unitary representation $\pi$ of $G$, there is a known procedure for producing $\operatorname{AV}(\pi)$ from $\operatorname{WF}(\pi)$ and vice versa \cite{SV}, \cite{R5}, \cite{BV}. In particular, these notions give equivalent information about $\pi$.

Now, suppose $H\subset G$ is a real, reductive algebraic subgroup such that $K\cap H\subset H$ is a maximal compact subgroup. Let $(\pi,V)$ be an irreducible, unitary representation of $G$, and let $V_K$ be the set of $K$ finite vectors of $V$. Note $V_K$ is a $\mathfrak{g}$ module. In Corollary 3.4 of \cite{Ko3} (see also Corollary 5.8 of \cite{Ko4}), Kobayashi showed that if $V_K|_{\mathfrak{h}}$ is discretely decomposable as an $\mathfrak{h}$ module, then $$q(\operatorname{AV}(\pi))\subset \mathcal{N}(\mathfrak{h}_{\mathbb{C}}/(\mathfrak{h}_{\mathbb{C}}\cap \mathfrak{k}_{\mathbb{C}}))^*.$$
Here $q\colon \mathfrak{g}_{\mathbb{C}}^*\rightarrow \mathfrak{h}_{\mathbb{C}}^*$ is the pullback of the inclusion. Corollary \ref{resintro2} can be viewed as an analogue of Kobayashi's statement with $\operatorname{AV}(\pi)$ replaced by $\operatorname{WF}(\pi)$ and in the special case where $\pi$ is a discrete series representation. 

We end this introduction by remarking that an earlier version of this paper proved all of the main results for reductive Lie groups of Harish-Chandra class rather than real, reductive algebraic groups. The authors made this switch in order to simplify the exposition.

\section{The Definition of the Wave Front Set}
\label{sec:def_wf}

In this section, we give definitions of the wave front set of a distribution, the singular spectrum of a hyperfunction, the wave front set of a unitary Lie group representation, and the singular spectrum of a unitary Lie group representation. In addition, we collect a few facts about these objects to be used later in the paper.

\bigskip

There are two types of distributions (resp. tempered distributions) on a smooth manifold $X$. First, there is the set of generalized measures, which is the set of continuous linear functionals on the space of smooth, compactly supported functions on $X$. Second, there is the set of generalized functions, which is the set of continuous linear functionals on the space of smooth, compactly supported densities on $X$. In the special case where $X$ is a real, finite dimensional vector space $W$, we can similarly talk about tempered generalized measures and tempered generalized functions. We will refer to both (tempered) generalized functions and (tempered) generalized measures as (tempered) distributions in this paper; the reader will be able to tell the difference from context.

Suppose $W$ is a real, finite dimensional vector space. If $u$ is a tempered generalized measure on $iW^*$, define the Fourier transform of $u$ to be 
$$\mathcal{F}[u](\xi):=\langle u_x,e^{\langle x,\xi\rangle }\rangle,$$
a tempered generalized function on $W$. Further, if $v$ is a tempered generalized function on $W$, define the Fourier transform of $v$ to be $\mathcal{F}[v]=u$ where $u$ is the unique tempered generalized measure on $iW^*$ whose Fourier transform is $v$. In what follows, we will often wish to make estimates on $\mathcal{F}[v]$. For this purpose, we will fix an inner product on $W$, and we let $|\cdot|$ denote the corresponding norm on $W$ and $dx$ the corresponding Lebesgue measure on $W$. We will utilize these, together with division by $i$ to identify $\mathcal{F}[v]$ with a generalized function on $W$ in order to make estimates.

We say a subset $\mathcal{C}$ of a finite-dimensional vector space $W$ is a \emph{cone} if $tv\in W$ whenever $v\in W$ and $t>0$ is a positive real number. If $f$ is a smooth function on a real vector space $W$ and $\mathcal{C}\subset W$ is an open cone, then we say $f$ is \emph{rapidly decaying} in $\mathcal{C}$ if for every $N\in \mathbb{N}$ there exists a constant $C_N>0$ such that
$$|f(x)|\leq  C_N|x|^{-N}$$
for all $x\in \mathcal{C}$. Colloquially, $f$ is rapidly decaying in $\mathcal{C}$ if it decays faster than any rational function in $\mathcal{C}$.

The definition of the (smooth) wave front set of a distribution was first given by H\"{o}rmander on page 120 of \cite{Hor1}. Here we give the most elementary definition (see pages 251-270 of \cite{Hor} for the standard exposition). 

\begin{definition}\label{wfdef} Suppose $u$ is a generalized function on an open subset $X\subset W$, and suppose $(x,\xi)\in X\times iW^*\cong iT^*X$ is a point in the cotangent bundle of $X$. The point $(x,\xi)$ is not in the \emph{wave front set} of $u$ if, and only if there exists an open cone $\xi\in \mathcal{C}\subset iW^*$ and a smooth compactly supported function $\varphi\in C_c^{\infty}(X)$ with $\varphi(x)\neq 0$ such that $\mathcal{F}[\varphi u]$ is rapidly decaying in $\mathcal{C}$. The wave front set of $u$ is denoted $\operatorname{WF}(u)$.
\end{definition}

Many authors use the convention that $(x,0)$ is never in the wave front set for any $x\in X$. However, we will use the convention that the zero section of $iT^*X$ is always in the wave front set because it will make the statements of our results cleaner.

There are several (equivalent) variants of this definition that we will sometimes use. First, instead of a cone $\xi \in\mathcal{C}\subset iW^*$, one may take an open subset $\xi\in \Omega\subset iW^*$ and require $$\mathcal{F}[\varphi u](t\eta)$$ to be rapidly decaying in the variable $t$ for $t>0$ uniformly in the parameter $\eta\in \Omega$. Second, suppose $U\subset X$ is an open set and $\mathcal{C}_1\subset iW^*$ is a closed cone. Then $(U\times \mathcal{C}_1)\cap \operatorname{WF}(u)=U\times \{0\}$ iff for every $\varphi\in C_c^{\infty}(U)$ and every compact subset $0\notin K\subset iW^*-\mathcal{C}_1$, the expression $\mathcal{F}[\varphi u](t\eta)$ is rapidly decaying in $t$ for $t>0$ uniformly for $\eta\in K$ (see page 262 of \cite{Hor}).
%instead of the existence of a single $\varphi\in C_c^{\infty}(X)$ with $\varphi(x)\neq 0$ such that $\mathcal{F}[\varphi u]$ is rapidly decaying, one may require there to be an open subset $x\in U\subset X$ such that for every $\varphi\in C_c^{\infty}(U)$, the function $\mathcal{F}[\varphi u]$ is rapidly decaying. 
Third, instead of a smooth, compactly supported function $\varphi$, one may take an even Schwartz function $\varphi$ that does not vanish at zero and form the family of Schwartz functions
$$\varphi_t(y)=t^{n/4}\varphi(t^{1/2}(y-x))$$
for $t>0$. Then $(x,\xi)$ is not in the wave front set of $u$ iff there exists an open subset $\xi\in \Omega\subset iW^*$ such that
$\mathcal{F}[\varphi_t u](t\eta)$
is rapidly decaying in the variable $t$ for $t>0$ uniformly in $\eta\in \Omega$. This third variant is nontrivial. It is due to Folland (see page 155 of \cite{Fo}); the case where $\varphi$ is a Gaussian was obtained earlier by Cordoba and Fefferman \cite{CF}.

Now, if $\psi\colon X\rightarrow Y$ is a diffeomorphism between two open sets in $W$ and $u$ is a distribution on $Y$, then (see page 263 of \cite{Hor})
$$\psi^*\operatorname{WF}(u)=\operatorname{WF}(\psi^*u).$$
One sees immediately from this functoriality property that the notion of the wave front set of a distribution on a smooth manifold is independent of the choice of local coordinates and is therefore well defined.

We note that the original definition of the wave front set involved pseudodifferential operators instead of abelian harmonic analysis. See page 89 of \cite{Hor2} for a proof that the original definition and the one above are equivalent.
 
\bigskip

The notion of the singular spectrum of a hyperfunction was first introduced by Sato in \cite{Sa}, \cite{KKS}. It was originally called the singular support; however, there is already a standard notion of singular support in the theory of distributions. Therefore, we use the term singular spectrum, which is now widely used. The book \cite{Mo} is a readable introduction to Sato's work. 

Years after Sato's work, Bros and Iagolnitzer introduced the notion of the essential support of a hyperfunction \cite{Ia}. Their definition was subsequently shown to be equivalent to Sato's \cite{Bo}. In his book \cite{Hor}, H\"{o}rmander introduced the notion of the analytic wave front set of a hyperfunction, and he proved that his notion is equivalent to the essential support of Bros and Iagolnitzer. 

We say that a smooth function $f$ on $\mathbb{R}$ is \emph{exponentially decaying} for $t>0$ if there exist constants $\epsilon>0$ and $C>0$ such that
$$|f(t)|\leq Ce^{-\epsilon t}$$
\noindent for $t>0$. We define a family of Gaussians on $\mathbb{R}$ by $$\mathcal{G}_t(s)=e^{-ts^2}.$$
We first give a definition of the singular spectrum that is a variant of the one given by Bros and Iagolnitzer for the essential support.

\begin{definition} \label{ssdefbi} Suppose $u$ is a distribution on an open subset $X\subset W$, and suppose $(x,\xi)\in X\times iW^*\cong iT^*X$ is a point in the cotangent bundle of $X$. The point $(x,\xi)$ is not in the \emph{singular spectrum} of $u$ if, and only if for some (equivalently any) smooth function $\varphi\in C_c^{\infty}(X)$ that is real analytic and nonzero in a neighborhood of $x$, there exists an open set $\xi\in \Omega\subset iW^*$ such that
$$\mathcal{F}[\mathcal{G}_t(|x-y|)\varphi(y)u(y)](t\eta)$$ is exponentially decaying in $t$ for $t>0$ uniformly for $\eta\in \Omega$. The singular spectrum of $u$ is denoted $\operatorname{SS}(u)$.
\end{definition}

In fact, one can extend this definition to hyperfunctions (see Chapter 9 of \cite{Hor}), but we will not need to consider hyperfunctions in this paper. In passing, we note that if $u$ happens to be a tempered distribution, then one need not multiply by the smooth compactly supported function $\varphi$ in the above definition. The nice thing about the above definition is that it is a clear analytic analogue of the Cordoba-Feffermann definition of the smooth wave front set. One simply replaces rapid decay by exponential decay in the definition. However, exponential decay can sometimes be inconvenient to check in some situations. Because of this, we now give an alternate definition of H\"{o}rmander.

For this definition, we need a remark. Fix a basis $\{v_1,\ldots,v_n\}$ of the finite dimensional, real vector space $W$. Suppose $U_1\subset U\subset W$ are precompact open sets with $U_1$ compactly contained in $U$. For every multi-index $\alpha=(\alpha_1,\ldots,\alpha_n)$, define the differential operator $$D^{\alpha}=\partial^{\alpha_1}_{v_1}\cdots \partial^{\alpha_n}_{v_n},$$
and define $|\alpha|:=\alpha_1+\cdots+\alpha_n$. Then there exists (see pages 25-26, 282 of \cite{Hor}) a sequence $\varphi_{N,U_1,U}$ of smooth functions supported in $U$ together with a family of positive constants $\{C_{\alpha}\}$ for every multi-index $\alpha=(\alpha_1,\ldots,\alpha_n)$ such that $\varphi_{N,U_1,U}(y)=1$ whenever $y\in U_1$ and 
\begin{equation}\label{eq:varphi_family_bound}
\sup_{y\in U}|D^{\alpha+\beta}\varphi_{N,U_1,U}(y)|\leq C_{\alpha}^{|\beta|+1} (N+1)^{|\beta|}
\end{equation}
whenever $|\beta|\leq N$. For each such pair of precompact open subsets $U_1\subset U\subset W$, we fix such a sequence $\varphi_{N,U_1,U}$.

We now give a variant of H\"{o}rmander's definition of the analytic wave front set of a distribution (see pages 282-283 of \cite{Hor}).

\begin{definition} \label{ssdefh} Suppose $u$ is a distribution on an open set $X\subset W$, and suppose $(x,\xi)\in X\times iW^*\cong iT^*X$ is a point in the cotangent bundle of $X$. The point $(x,\xi)$ is not in the \emph{singular spectrum} of $u$ if, and only if there exists a pair of precompact open sets $x\in U_1\subset U\subset X$ with $U_1$ compactly contained in $U$, an open set $\xi\in \Omega\subset iW^*$, and a constant $C>0$ such that for every $N\in \mathbb{N}$, we have the estimate
$$|\mathcal{F}[\varphi_{N,U_1,U} u](t\eta)|\leq C^{N+1}(N+1)^N t^{-N}$$
uniformly for $\eta\in \Omega$. The singular spectrum of $u$ is denoted $\operatorname{SS}(u)$.
\end{definition}

One key disadvantage of the definitions of Bros-Iagolnitzer and H\"{o}rmander is that they are not obviously invariant under analytic changes of coordinates. This is certainly an advantage of the original definition of Sato. However, in this paper, we will use the close relationship between the analytic wave front set of a distribution and the ability to write the distribution as the boundary value of a complex analytic function. This relationship is originally due to Sato \cite{KKS}, \cite{Mo}; however, we will follow the treatment in Sections 8.4, 8.5 of \cite{Hor}. We will use this theory in Section \ref{sec:regular_part2}. For now, we remark on the following application.

If $\psi\colon X\rightarrow Y$ is a bianalytic isomorphism between two open sets in $W$ and $u$ is a distribution on $Y$, then (see page 296 of \cite{Hor})
$$\psi^*\operatorname{SS}(u)=\operatorname{SS}(\psi^*u).$$
One sees immediately from this functoriality property that the notion of the singular spectrum of a distribution on an analytic manifold is independent of the choice of analytic local coordinates and is therefore well defined.

Finally, we remark that if $u$ is a distribution on an analytic manifold, then we have 
$$\operatorname{SS}(u)\supset \operatorname{WF}(u).$$
This is obvious from the above definitions. Roughly speaking, it means that it is tougher for $u$ to be analytic than smooth.

\bigskip

Suppose $G$ is a Lie group, $(\pi,V)$ is a unitary representation of $G$, and $(\cdot,\cdot)$ is the inner product on the Hilbert space $V$. As in the introduction, we define the \emph{wave front set of} $\pi$ and the \emph{singular spectrum of} $\pi$ by
$$\operatorname{WF}(\pi)=\overline{\bigcup_{u,v\in V}\operatorname{WF}_e(\pi(g)u,v)},\ \  \operatorname{SS}(\pi)=\overline{\bigcup_{u,v\in V}\operatorname{SS}_e(\pi(g)u,v)}.$$
Here the subscript $e$ means that we are only taking the piece of the wave front set (or singular spectrum) in the fiber over the identity in $iT^*G$. One might ask why we add this restriction. Utilizing the short argument on page 118 of \cite{How}, one observes that 
$$\overline{\bigcup_{u,v\in V}\operatorname{WF}(\pi(g)u,v)},\ \ \overline{\bigcup_{u,v\in V}\operatorname{SS}(\pi(g)u,v)}$$
are $G\times G$ invariant, closed subsets of $iT^*G\cong G\times i\mathfrak{g}^*$. In particular, they are simply $G\times \operatorname{WF}(\pi)$ and $G\times \operatorname{SS}(\pi)$. Therefore, if we did not add the the subscript $e$ in our definitions of the wave front set and singular spectrum of $\pi$, then we would simply be taking the product of our sets with $G$. This would be more cumbersome and no more enlightening. 

We note in passing that the above digression together with the above definitions of the wave front set and singular spectrum of a distribution imply that $\operatorname{WF}(\pi)$ and $\operatorname{SS}(\pi)$ are closed, $\operatorname{Ad}^*(G)$-invariant cones in $i\mathfrak{g}^*$. We also note that
$$\operatorname{SS}(\pi)\supset \operatorname{WF}(\pi)$$ for every unitary Lie group representation $\pi$ since $\operatorname{SS}_e(u)\supset \operatorname{WF}_e(u)$ whenever $u$ is a distribution on an analytic manifold.

Let $\mathcal{B}^1(V)$ denote the Banach space of trace class operators on $V$. Given a trace class operator $T\in \operatorname{End}V$, one can define a continuous function on $G$ by
$$\operatorname{Tr}_{\pi}(T)(g)=\operatorname{Tr}(\pi(g)T).$$

We define
$$\widetilde{\operatorname{WF}(\pi)}=\overline{\bigcup_{T\in \mathcal{B}^1(V)} \operatorname{WF}_e(\operatorname{Tr}_{\pi}(T)(g))},\ \widetilde{\operatorname{SS}(\pi)}=\overline{\bigcup_{T\in \mathcal{B}^1(V)} \operatorname{SS}_e(\operatorname{Tr}_{\pi}(T)(g))}.$$

The definition on the left was $i$ times the original definition used by Howe for $\operatorname{WF}^0(\pi)$ \cite{How}. Notice that when $T=(\cdot,u)v$ is a rank one operator, $\operatorname{Tr}_{\pi}(T)(g)=(\pi(g)u,v)$ is a matrix coefficient. Therefore, it is clear from our definitions that $\operatorname{WF}(\pi)\subset \widetilde{\operatorname{WF}(\pi)}$ and $\operatorname{SS}(\pi)\subset \widetilde{\operatorname{SS}(\pi)}$. The primary purpose of the remainder of this section is to prove equality.

\begin{proposition}\label{howeusequiv} We have 
$$\operatorname{WF}(\pi)=\widetilde{\operatorname{WF}(\pi)}\ \text{and}\ \operatorname{SS}(\pi)=\widetilde{\operatorname{SS}(\pi)}.$$
\end{proposition}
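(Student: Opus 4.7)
The containments $\operatorname{WF}(\pi)\subseteq\widetilde{\operatorname{WF}(\pi)}$ and $\operatorname{SS}(\pi)\subseteq\widetilde{\operatorname{SS}(\pi)}$ have already been noted. For the reverse inclusions, write $F_T:=\operatorname{Tr}_\pi(T)$ for $T\in\mathcal{B}^1(V)$ and $F_{u,v}(g):=(\pi(g)u,v)$. My plan rests on the identity, valid in exponential coordinates at $e$,
$$\mathcal{F}[\varphi F_T](\eta)=\operatorname{Tr}\bigl(\pi(\varphi_\eta)\,T\bigr),\qquad \varphi_\eta(g):=\varphi(g)e^{-\langle g,\eta\rangle},$$
combined with the trace/operator-norm duality $|\mathcal{F}[\varphi F_T](\eta)|\leq \|\pi(\varphi_\eta)\|_{op}\,\|T\|_1$. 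It will therefore suffice to show: whenever $\xi\notin\operatorname{WF}(\pi)$, there exist one $\varphi\in C_c^\infty(G)$ with $\varphi(e)\neq 0$ and one closed subcone $K$ of an open cone around $\xi$ such that $\|\pi(\varphi_{t\eta})\|_{op}$ decays faster than any polynomial in $t$, uniformly for $\eta\in K$ (and exponentially in $t$, for the singular-spectrum statement).

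The first step will produce pointwise-in-$(u,v)$ rapid decay of matrix coefficient Fourier transforms with a cutoff and cone that do \emph{not} depend on $u,v$. The excerpt has already recorded (via the short argument on p.~118 of \cite{How}) that $\overline{\bigcup_{u,v}\operatorname{WF}(F_{u,v})}=G\times\operatorname{WF}(\pi)$ in left-trivialized coordinates on $iT^*G$; in particular $\operatorname{WF}(F_{u,v})\subseteq G\times\operatorname{WF}(\pi)$ for every $u,v\in V$. I pick an open cone $\mathcal{C}$ about $\xi$ disjoint from $\operatorname{WF}(\pi)$, a closed subcone $K\subset\mathcal{C}\setminus\{0\}$ containing $\xi$, and an arbitrary $\varphi\in C_c^\infty(G)$ with $\varphi(e)\neq 0$ supported in an exponential chart. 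Since $\varphi F_{u,v}$ is then a compactly supported distribution whose wave front set lies in $\operatorname{supp}\varphi\times\operatorname{WF}(\pi)$, the standard theorem on Fourier decay of compactly supported distributions with controlled wave front set (Theorem 8.1.8 of \cite{Hor}) yields
$$\sup_{t\geq 1,\,\eta\in K}\,t^N\bigl|\mathcal{F}[\varphi F_{u,v}](t\eta)\bigr|<\infty\quad\text{for every }u,v\in V\text{ and }N\in\mathbb{N}.$$

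The second step upgrades this pointwise-in-$(u,v)$ bound to a uniform operator-norm bound by applying Banach-Steinhaus twice. Since $(\pi(\varphi_{t\eta})v,u)=\mathcal{F}[\varphi F_{v,u}](t\eta)$, for each fixed $v\in V$ and $N$ the family of continuous linear functionals $\{u\mapsto t^N(\pi(\varphi_{t\eta})v,u):t\geq 1,\,\eta\in K\}$ on $V$ is pointwise bounded, so uniformly bounded in norm; this gives $\|\pi(\varphi_{t\eta})v\|=O_v(t^{-N})$ uniformly for $\eta\in K$. A second application of Banach-Steinhaus to the operator family $\{t^N\pi(\varphi_{t\eta}):t\geq 1,\,\eta\in K\}$ on $V$ then delivers $\|\pi(\varphi_{t\eta})\|_{op}=O(t^{-N})$ uniformly for $\eta\in K$, as required. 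The singular-spectrum case runs on the same rails: Definition \ref{ssdefbi} is used in place of Definition \ref{wfdef}, the cutoff $\varphi$ is replaced by the Gaussian-damped cutoff $\mathcal{G}_t(|x-y|)\varphi(y)$, ``rapid decay'' is replaced by ``exponential decay,'' and the analogous containment $\operatorname{SS}(F_{u,v})\subseteq G\times\operatorname{SS}(\pi)$ takes over.

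I expect the main obstacle to be Step 1: producing one cutoff $\varphi$ and one cone $K$ that work simultaneously for \emph{every} matrix coefficient, since without this uniformity there is no fixed family of operators to which Banach-Steinhaus could be applied. The crucial input that makes Step 1 succeed is the identification $\overline{\bigcup_{u,v}\operatorname{WF}(F_{u,v})}=G\times\operatorname{WF}(\pi)$, which follows from the $\operatorname{Ad}^*(G)$-invariance and closedness of $\operatorname{WF}(\pi)$; once this is in hand, Step 2 is a routine functional-analytic device, and trace/operator duality passes the resulting operator-norm bound to arbitrary trace-class $T$.
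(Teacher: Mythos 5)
Your argument for the wave front set half is correct and is essentially the paper's proof: both reduce to showing that $\|\pi(\varphi(g)e^{t\eta(\log g)})\|_{op}$ is rapidly decaying in $t$ uniformly on a neighborhood of $\xi$, obtain the pointwise-in-$(u,v)$ decay from the definition of $\operatorname{WF}(\pi)$ (your route through $\operatorname{WF}(F_{u,v})\subseteq G\times\operatorname{WF}(\pi)$ and the paper's route through the ``second variant'' of Definition \ref{wfdef} are the same standard fact about Fourier decay of compactly supported distributions), upgrade it via the uniform boundedness principle, and finish with $|\operatorname{Tr}(AT)|\leq\|A\|_{op}|T|_1$, which is exactly the easy implication in Howe's Lemma \ref{howelemmawf}.

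The singular spectrum half, however, does not ``run on the same rails,'' and this is where your proposal has a genuine gap. For $\operatorname{WF}$ you apply Banach--Steinhaus to the single family $\{t^N\pi(\varphi_{t\eta})\}_{t,\eta}$, which is pointwise bounded because rapid decay is quantified by a fixed exponent $N$ common to all $(u,v)$. For $\operatorname{SS}$ the hypothesis only gives $|(\pi(\varphi\,\mathcal{G}_t(|\log|)e^{t\eta(\log)})v,u)|\leq C_{u,v}e^{-\epsilon_{u,v}t}$ with a rate $\epsilon_{u,v}$ depending on the pair of vectors; there is no single family $\{e^{\epsilon t}\pi(\cdots)\}$ known in advance to be pointwise bounded, so Banach--Steinhaus does not apply directly. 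One must first extract a uniform rate. The paper does this with a Baire category argument applied twice to the closed sets $S_n=\{w\in V:\ |I(\varphi,\eta,v,w)(t)|\leq ne^{-t/n}\ \text{for all}\ t>0,\ \eta\in\Omega\}$, whose union over $n$ is all of $V$: some $S_n$ contains a ball, whence $C_{u,v}$ and $\epsilon_{u,v}$ may be taken independent of $u,v$ on the unit sphere, and only then does one get the operator-norm estimate needed to invoke Lemma \ref{howelemmassbi}. This uniformization is the one non-routine step of the proof and needs to be supplied explicitly in your write-up.
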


To prove the Proposition, we will need to recall some facts about wave front sets of representations from \cite{How}. 
%If $f$ is a smooth function on $\mathbb{R}$, then we say $f$ is \emph{rapidly decaying} if for every $N\in \mathbb{N}$ there exists a constant $C_N>0$ such that
%$$|f(t)|\leq  C_Nt^{-N}\ \text{for}\ t>0.$$
%We say $f$ is \emph{exponentially decaying} if there exist constants $\epsilon>0$ and $C>0$ such that
%$$|f(t)|\leq Ce^{-\epsilon t}\ \text{for}\ t>0.$$
If $T\in \operatorname{End}V$ is a bounded linear operator, let $|T|_{\infty}$ denote the operator norm of $T$. If $T\in \mathcal{B}^1(V)$ is a trace class operator, let $|T|_1$ denote the trace class norm of $T$. 
%Recall that if $\{e_i\}$ is an orthonormal basis for $V$, then $|T|_1=\sum_i |(Te_i,e_i)|$.
%Define a family of Gaussians on $\mathbb{R}$ by $$\mathcal{G}_t(s)=e^{-ts^2}.$$

\begin{lemma} [Howe] \label{howelemmawf} Suppose $G$ is a Lie group, and $(\pi,V)$ is a unitary representation of $G$. The following are equivalent:\\
\begin{enumerate}[(a)]
\item $\xi\notin \widetilde{\operatorname{WF}(\pi)}$
\item For every $T\in \mathcal{B}^1(V)$, there exists an open set $e\in U\subset G$ on which the logarithm is a well-defined diffeomorphism onto its image and an open set $\xi\in \Omega\subset i\mathfrak{g}^*$ such that for every $\varphi\in C_c^{\infty}(U)$, the absolute value of the integral
$$I(\varphi,\eta,T)(t)=\int_G \operatorname{Tr}_{\pi}(T)(g) e^{t\eta(\log g)}\varphi(g)dg$$
is rapidly decaying in $t$ for $t>0$ uniformly for $\eta\in \Omega$.
\item There exists an open set $e\in U\subset G$ on which the logarithm is a well-defined diffeomorphism onto its image and an open set $\xi\in \Omega\subset i\mathfrak{g}^*$ such that for every $\varphi\in C_c^{\infty}(U)$ there exists a family of constants $C_N(\varphi)>0$ such that 
$$|I(\varphi,\eta,T)(t)|\leq C(\varphi)|T|_1t^{-N}$$
for $t>0$, $\eta\in \Omega$, and $T\in \mathcal{B}^1(V)$. (The constants $C(\varphi)$ may be chosen independent of both $\eta\in W$ and $T\in \mathcal{B}^1(V)$).
\item There exists an open set $e\in U\subset G$ on which the logarithm is a well-defined diffeomorphism onto its image and an open set $\xi\in \Omega\subset i\mathfrak{g}^*$ such that for every $\varphi\in C_c^{\infty}(U)$, the quantity
$$|\pi(\varphi(g) e^{t\eta(\log g)})|_{\infty}$$
is rapidly decaying in $t$ for $t>0$ uniformly in $\eta\in \Omega$.
\end{enumerate}
\end{lemma}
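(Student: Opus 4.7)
The central analytic tool is the Banach--Steinhaus principle applied to the family of continuous linear functionals
$$L_{t,\eta}(T):=I(\varphi,\eta,T)(t)=\operatorname{Tr}\bigl(\pi(\psi_{t,\eta})T\bigr),\qquad \psi_{t,\eta}(g):=\varphi(g)\,e^{t\eta(\log g)},$$
on the Banach space $\mathcal{B}^1(V)$. Unitarity of $\pi$ gives $|\operatorname{Tr}(\pi(g)T)|\le |T|_1$, so each $L_{t,\eta}$ is continuous with norm dominated by $\|\psi_{t,\eta}\|_{L^1(U)}$. My plan is to run the cycle (a)$\Rightarrow$(c)$\Rightarrow$(b)$\Rightarrow$(a), together with the side equivalence (c)$\Leftrightarrow$(d).

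The side equivalence (c)$\Leftrightarrow$(d) is trace--operator-norm duality: the isometric identification $\mathcal{B}^1(V)^*\cong\mathcal{B}(V)$ via $A\mapsto(T\mapsto\operatorname{Tr}(AT))$ gives
$$|\pi(\psi_{t,\eta})|_\infty=\sup_{|T|_1\le 1}|L_{t,\eta}(T)|,$$
so (c) and (d) differ only by taking a supremum over the unit ball of $\mathcal{B}^1(V)$, with the H\"{o}lder-type bound $|\operatorname{Tr}(AT)|\le |A|_\infty|T|_1$ closing the loop. The implication (c)$\Rightarrow$(b) is trivial (fix $T$ and absorb $|T|_1$ into the constant).

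For the main step (a)$\Rightarrow$(c) I would proceed as follows. Since $\widetilde{\operatorname{WF}(\pi)}$ is a closed cone in $i\mathfrak{g}^*$, pick an open conic neighborhood $\Omega\ni\xi$ disjoint from it and a small open $U\ni e$ on which $\log$ is a diffeomorphism. The short invariance argument of \cite[p.~118]{How} shows $\operatorname{WF}(\operatorname{Tr}_\pi(T))=G\times\operatorname{WF}_e(\operatorname{Tr}_\pi(T))$ under the standard trivialization of $iT^*G$, so for every $T\in\mathcal{B}^1(V)$ the full wave front set avoids $U\times\overline{\Omega'}$ whenever $\Omega'$ is a cone compactly contained in $\Omega$. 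The closed-cone variant of Definition \ref{wfdef} then supplies, for each such $T$, each $\varphi\in C_c^\infty(U)$, and each compact $K\subset\Omega'$, a constant $C_N(\varphi,T)$ with
$$\sup_{t>0,\,\eta\in K}\,t^N\,|L_{t,\eta}(T)|\le C_N(\varphi,T).$$
Thus $\{t^N L_{t,\eta}\}_{(t,\eta)\in\mathbb{R}_{>0}\times K}$ is a pointwise bounded family on $\mathcal{B}^1(V)$, and Banach--Steinhaus upgrades this to the uniform bound $|L_{t,\eta}(T)|\le C_N(\varphi)|T|_1 t^{-N}$ valid for all $T$, all $t>0$, and all $\eta\in K$, which is the content of (c).

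The remaining step (b)$\Rightarrow$(a) is the most delicate: whereas (a) requires $\xi$ outside the \emph{closure} of $\bigcup_T\operatorname{WF}_e(\operatorname{Tr}_\pi(T))$, the hypothesis (b) only furnishes, for each individual $T$, a cone $\Omega_T\ni\xi$ avoiding $\operatorname{WF}_e(\operatorname{Tr}_\pi(T))$. \emph{Extracting a single common conic neighborhood of $\xi$ from these per-$T$ data is the main obstacle of the lemma.} Once one exhibits a precompact cone $\Omega_0\ni\xi$ and a neighborhood $U\ni e$ such that $\sup_{t>0,\,\eta\in K}t^N|L_{t,\eta}(T)|<\infty$ holds pointwise in $T$ for each compact $K\subset\Omega_0$, Banach--Steinhaus again yields (c), and the open cone $\Omega_0$ is then automatically in the complement of $\widetilde{\operatorname{WF}(\pi)}$, giving (a). I would attempt this extraction by a contradiction argument that plays the $\operatorname{Ad}^*(G)$-invariance and conicity of $\widetilde{\operatorname{WF}(\pi)}$ against the closedness of each $\operatorname{WF}_e(\operatorname{Tr}_\pi(T))$, reducing the unknown cone $\Omega_T$ to the fixed $\Omega_0$ at the cost of enlarging the implicit constant in (b).
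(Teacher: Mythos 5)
First, a point of reference: the paper does not prove this lemma at all --- it is quoted as ``a subset of Theorem 1.4 of \cite{How}'' --- so your proposal is being measured against Howe's argument rather than anything in the text. Your treatment of (a)$\Rightarrow$(c) (closed-cone variant of Definition \ref{wfdef} plus Banach--Steinhaus on the functionals $T\mapsto t^N I(\varphi,\eta,T)(t)$), of (c)$\Rightarrow$(b), and of (c)$\Leftrightarrow$(d) via the duality $\mathcal{B}^1(V)^*\cong\mathcal{B}(V)$ is sound; these are exactly the mechanisms at work. One small imprecision: for an individual $T$ the identity $\operatorname{WF}(\operatorname{Tr}_{\pi}(T))=G\times\operatorname{WF}_e(\operatorname{Tr}_{\pi}(T))$ is false (translating moves you to a \emph{different} trace-class operator, e.g.\ $T\pi(g)$); what the invariance argument gives, and what you actually need, is $\operatorname{WF}(\operatorname{Tr}_{\pi}(T))\subset G\times\widetilde{\operatorname{WF}(\pi)}$, which under hypothesis (a) still clears $U\times\overline{\Omega'}$.

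The genuine gap is (b)$\Rightarrow$(a), which you correctly identify as the crux but do not prove. The strategy you gesture at --- playing $\operatorname{Ad}^*(G)$-invariance and conicity against closedness --- is not the right tool: the obstruction has nothing to do with the group action, and conicity alone cannot convert ``$\xi\notin\operatorname{WF}_e(\operatorname{Tr}_{\pi}(T))$ for each $T$, with $T$-dependent neighborhoods $U_T\times\Omega_T$'' into a single neighborhood working for all $T$. The device that closes this is Baire category on $\mathcal{B}^1(V)$ itself: fix countable neighborhood bases $\{U_n\}$ of $e$ and $\{\Omega_m\}$ of $\xi$ and, for each fixed $N$, write $\mathcal{B}^1(V)$ as the countable union over $(n,m,k)$ of the \emph{closed} sets of those $T$ satisfying $|I(\chi_n,\eta,T)(t)|\leq k(1+t)^{-N}$ for all $t>0$, $\eta\in\Omega_m$ (with $\chi_n$ a fixed cutoff on $U_n$); one of these has interior, hence by linearity contains a ball about $0$, and rescaling gives the uniform bound $C_N|T|_1t^{-N}$ on a common $U\times\Omega$. (A further bookkeeping step is needed to arrange a single $U\times\Omega$ for all $N$ simultaneously, and to pass from the single cutoff $\chi_n$ back to arbitrary $\varphi$.) This is precisely the argument the paper itself runs later --- the sets $S_n$ and $S_n(v)$ in the proof of Proposition \ref{howeusequiv} --- applied there to vectors rather than trace-class operators. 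Without this step your cycle (a)$\Rightarrow$(c)$\Rightarrow$(b)$\Rightarrow$(a) does not close, and the lemma's hidden content (that $\bigcup_{T}\operatorname{WF}_e(\operatorname{Tr}_{\pi}(T))$ is already closed, so that (b) is equivalent to (a)) remains unestablished.
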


This Lemma is a subset of Theorem 1.4 of \cite{How}. Some of the notation has been slightly altered for convenience. Next, we need an analogue of this Lemma for our first definition of the singular spectrum, Definition \ref{ssdefbi}.

\begin{lemma} \label{howelemmassbi} Suppose $G$ is a Lie group and $(\pi,V)$ is a unitary representation of $G$. The following are equivalent:\\
\begin{enumerate}[(a)]
\item $\xi\notin \widetilde{\operatorname{SS}(\pi)}$
\item For every $T\in \mathcal{B}^1(V)$ and for some (equivalently every) pair of precompact open sets $e\in U_1\subset U\subset G$ with $U_1$ compactly contained in $U$ and so that the logarithm on $U$ is a well-defined bianalytic isomorphism onto its image, there exists an open set $\xi\in \Omega\subset i\mathfrak{g}^*$ such that for some (equivalently every) $\varphi\in C_c^{\infty}(U)$ that is identically one on $U_1$, the absolute value of the integral
$$I(\varphi,\eta,T)(t)=\int_G \operatorname{Tr}_{\pi}(T)(g) e^{t\eta(\log g)}\varphi(g)\mathcal{G}_t(|\log(g)|)dg$$
is exponentially decaying in $t$ for $t>0$ uniformly for $\eta\in \Omega$.
\item For some (equivalently every) pair of precompact open sets $e\in U_1\subset U\subset G$ with $U_1$ compactly contained in $U$ and so that the logarithm on $U$ is a well-defined bianalytic isomorphism onto its image, there exists an open set $\xi\in \Omega\subset i\mathfrak{g}^*$ such that for some (equivalently every) $\varphi\in C_c^{\infty}(U)$ that is identically one on $U_1$, there exist constants $C(\varphi)>0$ and $\epsilon(\varphi)>0$ such that 
$$|I(\varphi,\eta,T)(t)|\leq C(\varphi)|T|_1e^{-\epsilon(\varphi) t}$$
for $t>0$, $\eta\in \Omega$, and $T\in \mathcal{B}^1(V)$. (The constants $C(\varphi)$ and $\epsilon(\varphi)$ may be chosen independent of both $\eta\in \Omega$ and $T\in \mathcal{B}^1(V)$).
\item For some (equivalently every) pair of precompact open sets $e\in U_1\subset U\subset G$ with $U_1$ compactly contained in $U$ and so that the logarithm on $U$ is a well-defined bianalytic isomorphism onto its image, there exists an open set $\xi\in \Omega\subset i\mathfrak{g}^*$ such that for some (equivalently every) $\varphi\in C_c^{\infty}(U)$ that is identically one on $U_1$, the quantity
$$|\pi(\varphi(g) \mathcal{G}_t(|\log(g)|) e^{t\eta(\log g)})|_{\infty}$$
is exponentially decaying in $t$ for $t>0$ uniformly in $\eta\in \Omega$.
\end{enumerate}
\end{lemma}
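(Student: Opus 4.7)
My plan is to follow the cyclic scheme $(a)\Rightarrow(b)\Rightarrow(c)\Rightarrow(d)\Rightarrow(a)$ used in Howe's proof of Lemma \ref{howelemmawf}, adapting each step by replacing ``rapid decay'' with ``exponential decay'' and inserting the Gaussian factor $\mathcal{G}_t(|\log g|)$ throughout. The implication $(a)\Leftrightarrow(b)$ is essentially a restatement: after fixing a pair $U_1\subset U$ on which $\log\colon U\to \mathfrak{g}$ is bianalytic, pushing each distribution $\operatorname{Tr}_\pi(T)$ forward through the logarithm and applying Definition \ref{ssdefbi} at the origin, the statement that $\xi\notin \operatorname{SS}_e(\operatorname{Tr}_\pi(T))$ for every trace-class $T$ is exactly the content of $(b)$. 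A useful observation is that in this step the open cone $\Omega\ni \xi$ may be chosen common for all $T$, since $\widetilde{\operatorname{SS}(\pi)}$ is closed and therefore its complement is open.

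For $(c)\Rightarrow(d)$, I set $f_t(g):=\varphi(g)\mathcal{G}_t(|\log g|)e^{t\eta(\log g)}$ and note that $I(\varphi,\eta,T)(t)=\operatorname{Tr}(\pi(f_t)T)$. The operator--trace duality $\mathcal{L}(V)\cong \mathcal{B}^1(V)^*$ gives
$$|\pi(f_t)|_\infty = \sup_{|T|_1\leq 1}|\operatorname{Tr}(\pi(f_t)T)|,$$
so the uniform estimate from $(c)$ transfers directly to the operator norm, yielding $(d)$. For $(d)\Rightarrow(a)$, the trace-norm H\"older inequality
$$|I(\varphi,\eta,T)(t)| = |\operatorname{Tr}(\pi(f_t)T)| \leq |\pi(f_t)|_\infty\cdot |T|_1$$
gives exponential decay for each trace-class matrix coefficient uniformly in $\eta\in \Omega$, placing the open set $\Omega$ disjoint from $\widetilde{\operatorname{SS}(\pi)}$.

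The main obstacle is $(b)\Rightarrow(c)$: one must upgrade pointwise-in-$T$ exponential decay to a uniform bound with $|T|_1$, and in particular extract a common exponential rate. I plan a Banach--Steinhaus style Baire argument. Fix admissible data $(U_1,U,\varphi,\Omega)$ with $\Omega$ chosen common for all $T$ as above, and for each integer $n\geq 1$ set
$$S_n = \bigl\{T\in \mathcal{B}^1(V)\,:\,\sup_{t>0,\ \eta\in \Omega} e^{t/n}|I(\varphi,\eta,T)(t)|\leq n\bigr\}.$$
Since $T\mapsto \operatorname{Tr}(\pi(f_t)T)$ is continuous on $\mathcal{B}^1(V)$ for each fixed $t,\eta$, the defining supremum is lower semicontinuous in $T$ and each $S_n$ is closed. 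By $(b)$, $\mathcal{B}^1(V)=\bigcup_n S_n$, so Baire produces some $S_N$ with nonempty interior. Exploiting the symmetry $T\in S_N\Rightarrow -T\in S_N$ and subadditivity $T_1,T_2\in S_N\Rightarrow T_1+T_2\in S_{2N}$, I conclude that $\{T\,:\,|T|_1\leq r\}\subset S_{2N}$ for some $r>0$; rescaling then produces the estimate in $(c)$ with $\epsilon=1/(2N)$ and $C=2N/r$. The remaining ``some (equivalently every)'' reductions within $(b)$--$(d)$, which allow one to vary the admissible choices of $(U_1,U,\varphi)$, are routine cutoff-function manipulations aided by the fact that the Gaussian $\mathcal{G}_t$ forces exponential smallness of the integrand away from $e$, thereby absorbing errors from the difference of two admissible cutoffs.
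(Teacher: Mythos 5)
Your proposal is correct and follows exactly the route the paper intends: the paper gives no independent proof of this lemma, remarking only that it is ``nearly identical'' to Howe's cyclic argument for Lemma \ref{howelemmawf} (Theorem 1.4 of \cite{How}) with rapid decay replaced by exponential decay and the Gaussian factor inserted, which is precisely your scheme $(a)\Rightarrow(b)\Rightarrow(c)\Rightarrow(d)\Rightarrow(a)$. Your Baire-category upgrade from $(b)$ to $(c)$ is the same device the authors themselves deploy explicitly in the singular-spectrum half of the proof of Proposition \ref{howeusequiv}, so the details you supply are consistent with the paper's conventions.
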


We note that the proof of Lemma \ref{howelemmassbi} is nearly identical to the proof of Lemma \ref{howelemmawf}. As noted before, Lemma \ref{howelemmawf} is part of Theorem 1.4 on page 122 of \cite{How}.

Next, we prove Proposition \ref{howeusequiv}.

\begin{proof} In both cases, one containment is obvious. Therefore, to prove the Lemma it is enough to show
$$\overline{\bigcup_{T\in \mathcal{B}^1(V)} \operatorname{WF}_e(\operatorname{Tr}_{\pi}(T))} \subset \overline{\bigcup_{v,w\in V} \operatorname{WF}_e(\pi(g)v,w)}$$
and 
$$\overline{\bigcup_{T\in \mathcal{B}^1(V)} \operatorname{SS}_e(\operatorname{Tr}_{\pi}(T))} \subset \overline{\bigcup_{v,w\in V} \operatorname{SS}_e(\pi(g)v,w)}.$$

In particular, it is enough to fix $$\xi\notin \overline{\bigcup_{v,w\in V} \operatorname{WF}_e(\pi(g)v,w)},\ \zeta\notin \overline{\bigcup_{v,w\in V} \operatorname{SS}_e(\pi(g)v,w)}$$ and then show that 
$$\xi\notin \overline{\bigcup_{T\in \mathcal{B}^1(V)} \operatorname{WF}_e(\operatorname{Tr}_{\pi}(T))},\ \zeta\notin \overline{\bigcup_{T\in \mathcal{B}^1(V)} \operatorname{SS}_e(\operatorname{Tr}_{\pi}(T))}.$$

By the second variant of Definition \ref{wfdef}, we may find an open neighborhood $e\in U\subset G$ on which the logarithm is well-defined and an open neighborhood $\xi\in \Omega\subset i\mathfrak{g}^*$ such that for all $N\in \mathbb{N}$ and $\varphi\in C_c^{\infty}(U)$ the quantity
$$\left|t^N\int_{U} \varphi(g)e^{t\langle \log(g),\eta\rangle}(\pi(g)v,w)dg\right|$$
is bounded as a function of $\eta\in \Omega$ and $t>0$ for every $v,w\in V$. By the uniform boundedness principle, we deduce that the family of operators $t^N\pi(\varphi(g)e^{it\langle \log(g),\eta\rangle})$ is uniformly bounded in the operator norm for $\eta\in \Omega$ and $t>0$. Therefore
$$\left|\pi(\varphi(g)e^{it\langle \log(g),\eta\rangle})\right|_{\infty}$$
is rapidly decreasing in $t$ for $t>0$ uniformly in $\eta\in \Omega$. Utilizing Lemma \ref{howelemmawf}, the first statement follows.

For the singular spectrum case, by Definition \ref{ssdefbi}, we may find a pair of precompact open neighborhoods $e\in U_1\subset U\subset G$ on which the logarithm is well-defined and an open neighborhood $\zeta\in \Omega\subset i\mathfrak{g}^*$ such that for some $\varphi\in C_c^{\infty}(U)$ with $\varphi=1$ on $U_1$, we have
$$\left|\int_{U} \varphi(g)\mathcal{G}_t(|\log(g)|)e^{t\langle \log(g),\eta\rangle}(\pi(g)v,w)dg\right|\leq C_{v,w}(\varphi)e^{-\epsilon(v,w,\varphi)t}$$
for $t>0$ and $\eta\in \Omega$. We must show that the above constants $C_{v,w}(\varphi)$ and $\epsilon(v,w,\varphi)$ are independent of $v$ and $w$ subject to the conditions $|v|=|w|=1$. Denote the above integral by $I(\varphi,\eta,v,w)(t)$ and fix $v$. Let 
$$S_n(v)=\{w\in V|\ |I(\varphi,\eta,v,w)(t)|\leq ne^{-(1/n)t}\ \text{uniformly\ for}\ \eta\in \Omega\}.$$
By the Baire Category Theorem and the linearity of $I$ in the variable $w$, we observe that $S_{n_v}(v)$ contains a $\delta$ ball, $B_{\delta}(0)$, around zero for some $n_v$. In particular, for fixed $v$, the constants $C_{v,w}(\varphi)$ and $\epsilon(v,w,\varphi)$ can be taken independent of $w$ with $|w|=1$ ($C_{v,w}(\varphi)=n_v/\delta$, $\epsilon(v,w,\varphi)=1/n_v$ in the above argument).

In particular, we may find a pair of precompact open neighborhoods $e\in U_1\subset U\subset G$ on which the logarithm is well-defined and an open neighborhood $\zeta\in \Omega\subset i\mathfrak{g}^*$ such that for some $\varphi\in C_c^{\infty}(U)$ with $\varphi=1$ on $U_1$, we have
$$\left|\int_{U} \varphi(g)\mathcal{G}_t(|\log(g)|)e^{t\langle \log(g),\eta\rangle}\pi(g)v dg\right|\leq C_v(\varphi)e^{-\epsilon(v,\varphi)t}$$
for $t>0$ and $\eta\in \Omega$. Denote the integral on the left by $I(\varphi,\eta,v)(t)$ and set
$$S_n=\{v\in V|\ |I(\varphi,\eta,v)(t)|\leq ne^{-(1/n)t}\ \text{uniformly\ for}\ \eta\in \Omega\}.$$
Utilizing the Baire Category Theorem and the linearity of $I(\varphi,\eta,v)$ in the variable $v$, we observe that there exists $N$ for which $S_N$ contains a $\delta$ ball, $B_{\delta}(0)$, about the origin. In particular, we may set $C_v(\varphi)=N/\delta$ and $\epsilon(v,\varphi)=1/N$ in the above inequality for all $v\in V$ with $|v|=1$. It follows that 
$$|\pi(\varphi(g)\mathcal{G}_t(|\log(g)|)e^{t\langle \log(g),\eta\rangle})|_{\infty}$$
is exponentially decaying in $t$ for $t>0$ uniformly for $\eta\in \Omega$. The second statement in Proposition \ref{howeusequiv} now follows from Lemma \ref{howelemmassbi}.
\end{proof}

\section{Wave Front Sets and Distribution Vectors}
\label{sec:dist_vec}

If $(\pi,V)$ is a unitary representation of a Lie group $G$, then 
$$V^{\infty}=\{v\in V| g\mapsto \pi(g)v\ \text{is\ smooth}\}.$$
The Lie algebra $\mathfrak{g}$ acts on $V^{\infty}$, and we give $V^{\infty}$ a complete, locally convex topology via the seminorms $|v|_D=|Dv|$ for each $D\in \mathcal{U}(\mathfrak{g})$. Now, given a unitary representation $(\pi,V)$, we may form the conjugate representation $(\overline{\pi},\overline{V})$ by simply giving $V$ the conjugate complex structure. Define $V^{-\infty}$ to be the dual space of $\overline{V}^{\infty}$.
% Since $(\mathcal{H}^*)^{\infty}$ is a Frechet space, $\mathcal{H}^{-\infty}$ is complete (\textbf{reference}).

Given $\zeta,\eta\in V^{-\infty}$, we wish to define a generalized matrix coefficient denoted by $(\pi(g)\zeta,\eta)$. This generalized matrix coefficient will be a generalized function on $G$. To define it, we need a couple of preliminaries.
Suppose $\mu\in C_c^{\infty}(G,\mathcal{D}(G))$ is a smooth, compactly supported section of the complex density bundle $\mathcal{D}(G)\rightarrow G$ on $G$, and suppose $\zeta\in V^{-\infty}$. Then we define
$\pi(\mu)\zeta\in V^{-\infty}$ by
$$\langle \pi(\mu)\zeta,\overline{v}\rangle=\langle \zeta,\overline{\pi}(\iota^*\mu)\overline{v}\rangle=\langle \zeta,\int_G \overline{\pi}(g)\overline{v} d\mu(g^{-1})\rangle$$
for $\overline{v}\in \overline{V}^{\infty}$. Here $\iota$ denotes inversion on the group $G$.

\begin{lemma} \label{distvec} For $\mu\in C_c^{\infty}(G,\mathcal{D}(G))$ and $\zeta\in V^{-\infty}$, we have $\pi(\mu)\zeta\in V^{\infty}$. Moreover, if $\zeta,\eta\in V^{-\infty}$, then the linear functional
$$\mu \mapsto (\pi(\mu)\zeta,\eta)$$
is continuous and therefore defines a distribution on $G$. We will denote this distribution by
$(\pi(g)\zeta,\eta)$.
\end{lemma}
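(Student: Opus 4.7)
My plan is to reduce the lemma to a duality computation based on the smoothing property of convolution with a smooth compactly supported density. First I would show that $\overline{\pi}(\iota^*\mu)$ is continuous as a map from $\overline{V}$ into $\overline{V}^{\infty}$. For any $\overline{w}\in \overline{V}$, the Bochner integral $\int_G \overline{\pi}(g)\overline{w}\, d(\iota^*\mu)(g)$ converges absolutely in the Hilbert-space norm. Because $\iota^*\mu$ is smooth and compactly supported, one may differentiate under the integral sign: for each $D\in \mathcal{U}(\mathfrak{g})$ there is an associated density $\widetilde{D}\mu \in C_c^{\infty}(G,\mathcal{D}(G))$, obtained by applying a left-invariant differential operator to $\iota^*\mu$, such that $D\cdot \overline{\pi}(\iota^*\mu)\overline{w}=\overline{\pi}(\widetilde{D}\mu)\overline{w}$. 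This yields seminorm bounds of the form $|D\cdot \overline{\pi}(\iota^*\mu)\overline{w}|\leq \|\widetilde{D}\mu\|_1 \,|\overline{w}|$, which is precisely continuity into the Fr\'echet space $\overline{V}^{\infty}$.

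Next I would extract the first claim by duality. Taking the transpose of $\overline{\pi}(\iota^*\mu):\overline{V}\to \overline{V}^{\infty}$ produces a continuous operator $(\overline{V}^{\infty})^*\to (\overline{V})^*$. Under the defining identification $(\overline{V}^{\infty})^*=V^{-\infty}$ and the Riesz identification $(\overline{V})^*\cong V$ (sending $w\in V$ to the linear functional $\overline{v}\mapsto (w,v)_V$), the transpose sends $\zeta\in V^{-\infty}$ to the element $w\in V$ characterized by $(w,v)_V = \langle \zeta,\overline{\pi}(\iota^*\mu)\overline{v}\rangle$; comparing with the defining formula in the statement shows that this $w$ represents $\pi(\mu)\zeta$ as an element of $V\subset V^{-\infty}$. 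To upgrade $\pi(\mu)\zeta$ from $V$ to $V^{\infty}$, I would use the standard criterion that $w\in V$ lies in $V^{\infty}$ iff $D\cdot w\in V$ for every $D\in \mathcal{U}(\mathfrak{g})$, where the derivation is taken distributionally on $V^{-\infty}$. A direct manipulation of the defining formula gives $D\cdot \pi(\mu)\zeta = \pi(\widetilde{D}\mu)\zeta$, and applying the previous paragraph to $\widetilde{D}\mu \in C_c^{\infty}(G,\mathcal{D}(G))$ places this in $V$; hence $\pi(\mu)\zeta \in V^{\infty}$.

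For the second assertion, now that $\pi(\mu)\zeta\in V^{\infty}$, the generalized matrix coefficient $(\pi(\mu)\zeta,\eta)$ is defined as the evaluation (conjugated appropriately to match the sesquilinearity convention for the inner product) of $\eta\in V^{-\infty}=(\overline{V}^{\infty})^*$ on the smooth vector $\overline{\pi(\mu)\zeta}\in \overline{V}^{\infty}$. Continuity of the linear functional $\mu\mapsto (\pi(\mu)\zeta,\eta)$ on $C_c^{\infty}(G,\mathcal{D}(G))$ then follows from the quantitative bounds of the first two steps, taken uniformly over $\mu$ supported in any fixed compact set, which show that $\mu\mapsto \pi(\mu)\zeta$ is continuous from $C_c^{\infty}(G,\mathcal{D}(G))$ into $V^{\infty}$, combined with the continuity of the linear functional $v\mapsto \langle \eta,\overline{v}\rangle$ on $V^{\infty}$.

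The main obstacle I anticipate is the bookkeeping of complex conjugations and the precise identifications between $V$, $\overline{V}$, and their respective smooth and distributional duals — in particular, verifying cleanly in the second step that the abstract transpose construction reproduces the original distributional definition of $\pi(\mu)\zeta$ in the statement. Beyond that identification, the analytic content reduces to standard differentiation under the integral sign and the standard $\mathcal{U}(\mathfrak{g})$-derivative criterion for smooth vectors.
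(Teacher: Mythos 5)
Your argument is essentially correct, but note that the paper does not actually prove this lemma: it defers entirely to the exposition on pages 9--13 of \cite{he}, so there is no in-paper proof to match. The standard route in that style of exposition, which the paper has set up for itself, is to invoke Lemma \ref{diffvect}: write $\zeta=D_0u$ with $D_0\in\mathcal{U}(\mathfrak{g})$ and $u\in V$, move the derivative onto the density to get $\pi(\mu)\zeta=\pi(\widetilde{D_0}\mu)u$, and conclude by the G\aa rding smoothing argument applied to the ordinary vector $u$; continuity in $\mu$ then comes for free from the same seminorm estimates. Your route instead stays entirely on the dual side: G\aa rding smoothing of $\overline{\pi}(\iota^*\mu)\colon\overline{V}\to\overline{V}^{\infty}$, transposition to land $\pi(\mu)\zeta$ in $V$, and then the upgrade to $V^{\infty}$ via the criterion that a vector all of whose distributional $\mathcal{U}(\mathfrak{g})$-derivatives lie in $V$ is smooth. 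That is a valid alternative, and it has the merit of not presupposing the structure theorem $V^{-\infty}=\mathcal{U}(\mathfrak{g})\cdot V$. The one place where you lean on a genuinely nontrivial external input is precisely that upgrade step: the implication ``$Dw\in V$ for all $D$ distributionally $\Rightarrow$ $w\in V^{\infty}$'' is Poulsen's theorem that weakly smooth vectors are smooth (one first checks, via Sobolev embedding and the uniform bounds $|w_D|\,|v|$, that all matrix coefficients $g\mapsto(\pi(g)w,v)$ are smooth, and then passes from weak to strong smoothness). You should cite that rather than call it a direct manipulation; alternatively you could bypass it entirely by writing $\mu$ as a finite sum of convolutions $\nu_i*\rho_i$ (Dixmier--Malliavin), so that $\pi(\mu)\zeta=\sum\pi(\nu_i)(\pi(\rho_i)\zeta)$ with $\pi(\rho_i)\zeta\in V$ by your step two and $\pi(\nu_i)$ mapping $V$ into $V^{\infty}$ by G\aa rding. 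With either of those repairs made explicit, the proof is complete.
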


This Lemma has been well-known to experts for some time. For a proof, see the exposition on pages 9-13 of \cite{he}.

In fact, we may define the (smooth or analytic) wave front set of a unitary representation in terms of the (smooth or analytic) wave front sets of the generalized matrix coefficients of $G$.

\begin{proposition} \label{wfdistvec} We have the equalities
$$\operatorname{WF}(\pi)=\overline{\bigcup_{\zeta, \eta\in V^{-\infty}} \operatorname{WF}_e(\pi(g)\zeta,\eta)}$$
and 
$$\operatorname{SS}(\pi)=\overline{\bigcup_{\zeta, \eta\in V^{-\infty}} \operatorname{SS}_e(\pi(g)\zeta,\eta)}.$$
\end{proposition}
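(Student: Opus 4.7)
The inclusion $\operatorname{WF}(\pi)\subset \overline{\bigcup_{\zeta,\eta\in V^{-\infty}}\operatorname{WF}_e(\pi(g)\zeta,\eta)}$ (and the analogous one for $\operatorname{SS}$) is immediate: each $v\in V$ gives an element of $V^{-\infty}$ via $\overline{u}\mapsto(u,v)$, and under this embedding the generalized matrix coefficient of Lemma \ref{distvec} reduces to the ordinary matrix coefficient. Since $\operatorname{WF}(\pi)$ is already closed, for the reverse inclusion it suffices to fix arbitrary $\zeta,\eta\in V^{-\infty}$ together with a point $\xi\notin \operatorname{WF}(\pi)$, and show $\xi\notin\operatorname{WF}_e(\pi(g)\zeta,\eta)$. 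That is the content of the argument.

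By Proposition \ref{howeusequiv}, $\xi\notin\widetilde{\operatorname{WF}(\pi)}$, so Lemma \ref{howelemmawf}(d) supplies a symmetric neighborhood $e\in U\subset G$ on which $\log$ is defined and an open cone $\xi\in\Omega\subset i\mathfrak{g}^*$ such that
$$|\pi(\varphi(g)e^{t\eta_0(\log g)})|_\infty \text{ is rapidly decaying in } t>0 \text{ uniformly for } \eta_0\in\Omega,$$
for every $\varphi\in C_c^\infty(U)$. Choose such a $\varphi$ with $\varphi(e)\neq 0$ and set $\varphi_t(g)=\varphi(g)e^{t\eta_0(\log g)}$, so that the Fourier transform to be estimated is $(\pi(\varphi_t)\zeta,\eta)$. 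Continuity of $\zeta,\eta$ on $\overline{V}^\infty$ yields $D_\zeta,D_\eta\in\mathcal{U}(\mathfrak{g})$ and constants $C_\zeta,C_\eta>0$ with $|\langle\zeta,\overline{w}\rangle|\leq C_\zeta|D_\zeta w|$ and similarly for $\eta$. Combining this with the identities $D\pi(\varphi)=\pi(\tilde D\varphi)$ (for the left-invariant extension $\tilde D$) and $\pi(\varphi)^*=\pi(\varphi^*)$ with $\varphi^*(g)=\overline{\varphi(g^{-1})}$, applied once through $\eta$ and then through $\zeta$ after passing to the adjoint, one obtains
$$|(\pi(\varphi_t)\zeta,\eta)|\leq C_\zeta C_\eta\,\bigl|\pi\bigl(\tilde D_\zeta(\tilde D_\eta\varphi_t)^*\bigr)\bigr|_\infty.$$
Because $\eta_0\in i\mathfrak{g}^*$ is imaginary-valued, $\overline{e^{t\eta_0(\log g^{-1})}}=e^{t\eta_0(\log g)}$, so the exponential factor is preserved under $\varphi\mapsto\varphi^*$; meanwhile differentiating only pulls down polynomial powers of $t$ multiplied by derivatives of $\varphi$ and $\varphi^*$, all supported in $U$ by symmetry. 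Therefore
$$\tilde D_\zeta(\tilde D_\eta\varphi_t)^* = \sum_{k\leq N}t^k\,\psi_k(g)\,e^{t\eta_0(\log g)},\qquad \psi_k\in C_c^\infty(U),$$
and Lemma \ref{howelemmawf}(d) applied term by term absorbs the factors $t^k$ into the rapid decay. This delivers rapid decay of $(\pi(\varphi_t)\zeta,\eta)$ in $t>0$ uniformly for $\eta_0$ in a subcone of $\Omega$, hence $\xi\notin\operatorname{WF}_e(\pi(g)\zeta,\eta)$.

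The singular spectrum statement runs on the same rails: invoke Lemma \ref{howelemmassbi}(d) in place of Lemma \ref{howelemmawf}(d), and work with the Bros-Iagolnitzer family $\varphi(g)\mathcal{G}_t(|\log g|)e^{t\eta_0(\log g)}$ in place of $\varphi_t$. Since $|\log g^{-1}|=|\log g|$, the Gaussian is also preserved under $\varphi\mapsto\varphi^*$, and differentiating $e^{-t|\log g|^2}$ contributes only polynomial powers of $t$ times smooth functions analytic near $e$. Polynomial prefactors are harmless against exponential decay (they cost a slightly smaller $\epsilon$), and the same estimate gives exponential decay of the analogue of $(\pi(\varphi_t)\zeta,\eta)$. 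I expect the main bookkeeping obstacle in both cases to be the twin observations that (i) $e^{t\eta_0(\log g)}$ is preserved by the combined operation of complex conjugation and $g\mapsto g^{-1}$, which hinges on $\eta_0\in i\mathfrak{g}^*$, and (ii) all the derivatives and adjoints that appear can be arranged to remain supported in $U$, which hinges on choosing $U$ symmetric. Once these are in place the estimate is an exercise in bounding by operator norms through the seminorm presentation of distribution vectors.
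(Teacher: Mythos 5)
Your route is genuinely different from the paper's. The paper proves the hard inclusion in two lines: by Lemma \ref{diffvect} one writes $\zeta=D_1u$, $\eta=D_2v$ with $D_i\in\mathcal{U}(\mathfrak{g})$ and $u,v\in V$, so that $(\pi(g)\zeta,\eta)=L_{D_2}R_{D_1}(\pi(g)u,v)$, and then H\"{o}rmander's results that differential operators with smooth (resp.\ analytic) coefficients do not enlarge $\operatorname{WF}$ (resp.\ $\operatorname{SS}$) finish the proof. You avoid that factorization lemma entirely and instead route the estimate through Proposition \ref{howeusequiv} and the operator-norm criteria of Lemmas \ref{howelemmawf}(d) and \ref{howelemmassbi}(d), using only the seminorm continuity of $\zeta,\eta$ on $\overline{V}^{\infty}$; this is in effect the reverse of the uniform-boundedness step in the proof of Proposition \ref{howeusequiv}. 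For the smooth wave front set your argument is sound: Lemma \ref{howelemmawf}(d) applies to \emph{every} $\varphi\in C_c^{\infty}(U)$, so after shrinking $U$ to a symmetric neighborhood and decomposing the $\eta_0$-dependence of the coefficients $\psi_k$ (which is polynomial in $\eta_0$ with finitely many fixed functions of $C_c^{\infty}(U)$ as coefficients, and $\Omega$ may be taken bounded), the term-by-term application is legitimate.

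The singular spectrum half, however, has a gap. Lemma \ref{howelemmassbi}(d) gives exponential decay of $|\pi(\psi\,\mathcal{G}_t(|\log g|)e^{t\eta_0(\log g)})|_{\infty}$ only for $\psi\in C_c^{\infty}(U)$ that is \emph{identically one} on $U_1$, and the functions produced by $\tilde D_\zeta(\tilde D_\eta\,\cdot\,)^*$ are not of this form, so ``applied term by term'' is not licensed by the lemma as stated. The terms in which a derivative hits the cutoff itself are harmless: those coefficients vanish on $U_1$, hence live where $\mathcal{G}_t(|\log g|)\leq e^{-\delta^2 t}$ for some $\delta>0$, and the crude bound $|\pi(\nu)|_{\infty}\leq\|\nu\|_{L^1}$ already yields exponential decay. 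The problematic terms are those where derivatives hit $\mathcal{G}_t(|\log g|)$, $e^{t\eta_0(\log g)}$, or the Jacobian and modular factors: these have the form $t^k a(g)\varphi(g)\mathcal{G}_t(|\log g|)e^{t\eta_0(\log g)}$ with $a$ analytic near $e$ but not constant, and $a\varphi$ is not an admissible cutoff in Lemma \ref{howelemmassbi}(d). This is not cosmetic: for a merely smooth prefactor the resulting FBI-type transform need not decay exponentially at all (it detects the analytic singularities of the prefactor), so the analyticity of $a$ must actually be used. The step can be repaired --- for instance by Taylor-expanding $a$ at $e$, converting monomials $(\log g)^{\alpha}e^{t\eta_0(\log g)}$ into $t^{-|\alpha|}\partial_{\eta_0}^{\alpha}$ of the admissible family and invoking Cauchy estimates in $\eta_0$, with the remainder absorbed by the Gaussian; or by rerunning the Baire-category argument of Proposition \ref{howeusequiv} for the modified matrix coefficients $a\cdot(\pi(g)v,w)$ --- but some such additional argument is required. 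The paper's reduction via Lemma \ref{diffvect} sidesteps this entirely, since stability of $\operatorname{SS}$ under differential operators with analytic coefficients is quoted directly from \cite{Hor}.
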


The key to this Proposition is the following Lemma.

\begin{lemma} \label{diffvect} If $\zeta\in V^{-\infty}$, then there exists $D\in \mathcal{U}(\mathfrak{g})$ and $u\in V$ such that $Du=\zeta$.
\end{lemma}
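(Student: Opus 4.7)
The plan is to use elliptic regularity on $V^{\infty}$ via a Laplace-type element of $\mathcal{U}(\mathfrak{g})$. Fix a basis $X_1,\ldots,X_n$ of $\mathfrak{g}$ and put $\Omega = 1 - \sum_{i=1}^n X_i^2 \in \mathcal{U}(\mathfrak{g})$. Since each $\pi(X_i)$ is skew-adjoint, $\pi(\Omega)$ is a symmetric operator on $V^{\infty}$ with $\pi(\Omega) \geq I$, and by Nelson's theorem on analytic vectors it is essentially self-adjoint. Write $A$ for its self-adjoint closure, so $A \geq I$ and the spectral calculus produces positive self-adjoint operators $A^k$ for every $k \geq 0$.

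For each integer $k \geq 0$, let $V^k := \operatorname{Dom}(A^k)$, equipped with the Hilbert norm $\|v\|_k := \|A^k v\|$. A standard Nelson--Goodman argument shows that $V^{\infty} = \bigcap_k V^k$ and that the Fr\'echet topology on $V^{\infty}$ coming from the seminorms $v \mapsto \|Dv\|$ ($D \in \mathcal{U}(\mathfrak{g})$) coincides with the one generated by the single sequence $\|\cdot\|_k$. Concretely, for each $D \in \mathcal{U}(\mathfrak{g})$ of order $m$ one has $\|Dv\| \leq C_D \|A^m v\|$ for all $v \in V^{\infty}$; this is proved by induction on $m$ using that $A$ is a positive elliptic element.

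Given $\zeta \in V^{-\infty}$, continuity on $\overline{V}^{\infty}$ together with the preceding paragraph yields $k \geq 0$ and $C > 0$ with
\begin{equation*}
|\zeta(\overline{v})| \leq C \|A^k v\| \quad \text{for all } v \in V^{\infty}.
\end{equation*}
Since $V^{\infty}$ is dense in $V^k$, $\zeta$ extends to a continuous conjugate-linear functional on $V^k$. The map $A^k\colon V^k \to V$ is an isometric isomorphism, so the Riesz representation theorem supplies a unique $u \in V$ with $\zeta(\overline{v}) = (A^k v, u)$ for every $v \in V^{\infty}$. The right-hand side is exactly the value of the distribution vector $\Omega^k u \in V^{-\infty}$ on $\overline{v}$ (using that $\Omega$ is fixed by the principal anti-involution of $\mathcal{U}(\mathfrak{g})$ so that $\Omega$ is self-adjoint as an operator on $V$), so $\zeta = Du$ with $D = \Omega^k$.

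The substantive input is the Nelson--Goodman machinery: the essential self-adjointness of $\pi(\Omega)$ on $V^{\infty}$ and the equivalence of the two Fr\'echet topologies on $V^{\infty}$. Both are classical but nontrivial; I would cite them rather than reproduce their proofs. This is where the main difficulty of the argument sits. Once they are granted, the Riesz step above is elementary and the rest of the bookkeeping concerning how $\mathcal{U}(\mathfrak{g})$ acts on $V^{-\infty}$ is just a matter of unpacking definitions.
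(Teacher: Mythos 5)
Your proof is correct and is essentially the standard argument; the paper does not prove this lemma itself but cites the exposition in \cite{he}, which proceeds along the same Nelson--Goodman lines (essential self-adjointness of $\pi(1-\sum X_i^2)$ on $V^{\infty}$, equivalence of the Fr\'echet topologies, then Riesz representation). The only quibble is cosmetic: since $A$ has order two, the sharp form of the comparison estimate is $\|Dv\|\leq C_D\|A^m v\|$ for $D$ of order at most $2m$, but this indexing does not affect the argument.
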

 
This Lemma has been well-known to experts for some time. For a proof, see the exposition on page 5 of \cite{he}.

Now, we prove the Proposition.
\begin{proof} Clearly the left hand sides are contained in the right hand sides. To show the other directions, fix $\zeta,\eta\in V^{-\infty}$. Write $\zeta=D_1u$ and $\eta=D_2v$ with $D_1,D_2\in \mathcal{U}(\mathfrak{g})$ and $u,v\in V$. Then we have
$$\operatorname{WF}(\pi(g)\zeta,\eta)=\operatorname{WF}(L_{D_2}R_{D_1}(\pi(g)u,v))$$
and 
$$\operatorname{SS}(\pi(g)\zeta,\eta)=\operatorname{SS}(L_{D_2}R_{D_1}(\pi(g)u,v)).$$
Here $R_{D_1}$ (resp. $L_{D_2}$) denotes the action of $D_1$ (resp. $D_2$) via right (resp. left) translation on $C^{-\infty}(G)$. But, by (8.1.11) on page 256 of \cite{Hor}, we deduce
$$\operatorname{WF}(L_{D_2}R_{D_1}(\pi(g)u,v))\subset \operatorname{WF}(\pi(g)u,v).$$
And from the remark on the top of page 285 of \cite{Hor}, we deduce
$$\operatorname{SS}(L_{D_2}R_{D_1}(\pi(g)u,v))\subset \operatorname{SS}(\pi(g)u,v).$$
The Proposition follows.
\end{proof}

\section{Wave Front Sets of Induced Representations}
\label{sec:wf_induced}

Now, suppose $H\subset G$ is a closed subgroup, and let $\mathcal{D}^{1/2}\rightarrow G/H$ be the bundle of complex half densities on $G/H$. Let $(\tau,W)$ be a unitary representation of $H$, and let $\mathcal{W}\rightarrow G/H$ be the corresponding invariant, Hermitian (possibly infinite-dimensional) vector bundle on $G/H$. Then we obtain a unitary representation of $G$ by letting $G$ act by left translation on
$$L^2(G/H, \mathcal{W}\otimes\mathcal{D}^{1/2}).$$ This representation is usually denoted by $\operatorname{Ind}_H^G\tau$; it is called the representation of $G$ induced from the representation $\tau$ of $H$ (sometimes the term ``unitarily induced'' is used). Let $\mathfrak{g}$ (resp. $\mathfrak{h}$) denote the Lie algebra of $G$ (resp. $H$), and let $q\colon i\mathfrak{g}^*\rightarrow i\mathfrak{h}^*$ be the pullback of the inclusion. If $S\subset i\mathfrak{h}^*$, we define
$$\operatorname{Ind}_H^G S=\overline{\operatorname{Ad}^*(G)\cdot q^{-1}(S)}.$$
If $S$ is a cone, then $\operatorname{Ind}_H^G S$ is the smallest closed, $\operatorname{Ad}^*(G)$ invariant cone in $i\mathfrak{g}^*$ that contains $q^{-1}(S)$. The purpose of this section is to prove Theorem \ref{inducedintro}. Recall that we must show $$\operatorname{WF}(\operatorname{Ind}_H^G\tau)\supset \operatorname{Ind}_H^G\operatorname{WF}(\tau)$$
and 
$$\operatorname{SS}(\operatorname{Ind}_H^G\tau)\supset \operatorname{Ind}_H^G\operatorname{SS}(\tau).$$

We note that $\operatorname{WF}(\operatorname{Ind}_H^G\tau)$ and $\operatorname{SS}(\operatorname{Ind}_H^G\tau)$ are closed, $\operatorname{Ad}^*(G)$ invariant cones in $i\mathfrak{g}^*$. Therefore, to show that $\operatorname{WF}(\operatorname{Ind}_H^G\tau)$ contains $\operatorname{Ind}_H^G\operatorname{WF}(\tau)$ (respectively $\operatorname{SS}(\operatorname{Ind}_H^G\tau)$ contains $\operatorname{Ind}_H^G\operatorname{SS}(\tau)$), it is enough to show that $\operatorname{WF}(\operatorname{Ind}_H^G\tau)$ contains $q^{-1}(\operatorname{WF}(\tau))$ (respectively $\operatorname{SS}(\operatorname{Ind}_H^G\tau)$ contains $q^{-1}(\operatorname{SS}(\tau))$).

Before proving the Theorem, we first make a few general comments and then we will prove a Lemma. Suppose $H\subset G$ is a closed subgroup of a Lie group. Let $\mathcal{D}(G)\rightarrow G$ (resp. $\mathcal{D}(H)\rightarrow H$, $\mathcal{D}(G/H)\rightarrow G/H$) denotes the complex density bundle on $G$ (resp. $H$, $G/H$). Now, suppose we are given $f\in C(H)$, a continuous function on $H$, and $\omega\in \mathcal{D}_H(G/H)^*$, an element of the dual of the fiber over $\{H\}$ in the density bundle on $G/H$. We claim that $f\omega$ defines a generalized function on $G$.\\
\indent To see this, we must show how to pair $f\omega$ with a smooth, compactly supported density, $\mu$, on $G$. Let $n=\dim G$, $m=\dim H$, and recall that for each $h\in H$, $\mu_h$ is a map
$$\mu_h\colon \mathfrak{g}^{\oplus n}\rightarrow \mathbb{C}$$
satisfying $$\mu_h(AX_1,\ldots,AX_{n})=|\det A|\mu_h(X_1,\ldots,X_{n})$$
for $A\in \operatorname{End}(\mathfrak{g})$ and $X_1,\ldots,X_{n}\in \mathfrak{g}$. 
%Let $[(\mathfrak{g}/\mathfrak{h})^{\oplus \dim \mathfrak{g}/\mathfrak{h}}]'$ be the subset of $(\mathfrak{g}/\mathfrak{h})^{\oplus \dim \mathfrak{g}/\mathfrak{h}}$ consisting of linearly independent collections of vectors in $\mathfrak{g}/\mathfrak{h}$. 
Similarly, $\omega$ is a map
$$\omega\colon (\mathfrak{g}/\mathfrak{h})^{\oplus (n-m)}\rightarrow \mathbb{C}$$
satisfying $$\omega(AX_1,\ldots,AX_{n-m})=|\det A|^{-1}\omega(X_1,\ldots,X_{n-m})$$
for $A\in \operatorname{Aut}(\mathfrak{g}/\mathfrak{h})$ and $X_1,\ldots,X_{n-m}\in \mathfrak{g}/\mathfrak{h}$.\\
\indent To pair $f\omega$ with $\mu$, we must show that $\mu \omega$ defines a smooth, compactly supported density on $H$. For each $h\in H$, we will define a map 
$$\mu_h \omega\colon \mathfrak{h}^{\oplus m}\rightarrow \mathbb{C}.$$
To do this, we fix $Y_1,\ldots,Y_{n-m}\in \mathfrak{g}$ such that $\{\overline{Y_1},\ldots,\overline{Y}_{n-m}\}$ is a basis for $\mathfrak{g}/\mathfrak{h}$. Then we define
$$(\mu_h\omega)(X_1,\ldots,X_{m})=$$
$$\mu_h(X_1,\ldots,X_{m},Y_1,\ldots,Y_{n-m})\omega(\overline{Y_1},\ldots,\overline{Y}_{n-m})$$
for any $X_1,\ldots,X_{m}\in \mathfrak{h}$.  One checks directly that this definition of $\mu_h\omega$ is independent of the choice of $Y_1,\ldots,Y_{n-m}$ and that it satisfies
$$(\mu_h\omega)(AX_1,\ldots,AX_{m})=|\det A|(\mu_h\omega)(X_1,\ldots,X_{m})$$
for $A\in \operatorname{End}(\mathfrak{h})$ and $X_1,\ldots,X_{m}\in \mathfrak{h}$. In particular, $\mu\omega$ is a smooth, compactly supported density on $H$, and the pairing
$$\langle f\omega,\mu\rangle=\langle f,\mu\omega\rangle$$
is well-defined and continuous. Thus, $f\omega$ defines a generalized function on $G$.
\bigskip

Now, recall $(\tau,W)$ is a unitary representation of $H$. For $w_1\in W$ and a non-zero $\omega_1\in (\mathcal{D}_{H}(G/H)^{1/2})^*$, we define a distribution vector $$\delta_H(w_1,\omega_1)\in C_c^{-\infty}(G/H,\mathcal{W}\otimes \mathcal{D}^{1/2})\cong C^{\infty}(G/H,\overline{\mathcal{W}}\otimes \mathcal{D}^{1/2})^*$$ by
$$\delta_H(w_1,\omega_1)\colon \varphi\mapsto \langle \varphi(H),w_1\otimes \omega_1\rangle.$$
The above pairing is the tensor product of the pairing between $W$ and $\overline{W}$ via the inner product on the Hilbert space $W$ and the pairing between $\mathcal{D}_H(G/H)^{1/2}$ and its dual. Similarly, if $w_2\in W$ and $\omega_2\in (\mathcal{D}_{H}(G/H)^{1/2})^*$ is non-zero, we define a distribution vector $$\delta_H(w_2,\omega_2)\in C_c^{-\infty}(G/H,\mathcal{W}\otimes \mathcal{D}^{1/2})\cong C^{\infty}(G/H,\overline{\mathcal{W}}\otimes \mathcal{D}^{1/2})^*.$$
%For $w_2\in \overline{W}$ and a non-zero $\omega_2\in (\mathcal{D}_H(G/H)^{1/2})^*$, we define a distribution vector 
%$$\delta_H(w_2,\omega_2)\in C_c^{-\infty}(G/H,\overline{\mathcal{W}}\otimes \mathcal{D}^{1/2})\cong C^{\infty}(G/H,\mathcal{W}\otimes \mathcal{D}^{1/2})^*$$ by
%$$\delta_H(w_2,\omega_2):\ \varphi\mapsto \langle \varphi(H),w_2\otimes \omega_2\rangle.$$
Now, we have a continuous inclusion
%$$L^2(G/H,\mathcal{W}\otimes \mathcal{D}^{1/2})^{\infty}\subset C^{\infty}(G/H,\mathcal{W}\otimes \mathcal{D}^{1/2}),$$
$$L^2(G/H,\overline{\mathcal{W}}\otimes \mathcal{D}^{1/2})^{\infty}\subset C^{\infty}(G/H,\overline{\mathcal{W}}\otimes \mathcal{D}^{1/2}).$$
To deduce continuity, we must recall the local Sobolev inequalities. Such an equality for complex valued functions on $\mathbb{R}^n$ can be found in the last displayed equation on page 124 of \cite{FJ} where we assume $s=m$ is a positive integer and we take the version of the Sobolev norm given in Theorem 9.3.3 on page 122 of \cite{FJ}. Although a global inequality is given on $\mathbb{R}^n$ in \cite{FJ}, one can derive from it a local inequality which can be transferred to any smooth manifold. In particular, if $f$ is a complex valued, smooth function on a manifold $X$ with a fixed, smooth density $\omega$, and $U\subset X$ is a precompact, open subset, we obtain
$$\sup_{x\in U} |f(x)|\leq C \sum_{j=1}^N |D_jf|_{L^2(X,\omega)}$$
for some constant $C$ and some finite collection of differential operators $\{D_j\}$, both of which may depend on $U$. One can derive the identical inequality for a function $f$ valued in a separable Hilbert space by choosing an orthonormal basis $\{e_j\}$ and deducing the equality for $f$ from the equalities for the complex valued functions $(f,e_j)$. Locally, the density bundle $\mathcal{D}^{1/2}$ is trivial; hence, we may use this inequality to deduce that the above inclusion is continuous. 
% (see for instance Theorem 4.5.13 of \cite{Hor}). To get local Sobolev inequalites, start with global equalities, see page 124 of Friedlander and Joshi. Then take u in the local Sobolev space (with respect to an open set U in R^n) and multiply by a cutoff function varphi, so that (varphi u) is in the global Sobolev space. Then use the global inequalities (the varphi part will just multiply stuff by constants). Note that everything is local so working on a manifold, only have to worry about a subset U in R^n. 
Dualizing, we obtain a continuous inclusion
$$C^{-\infty}_c(G/H,\mathcal{W}\otimes \mathcal{D}^{1/2}) \subset L^2(G/H,\mathcal{W}\otimes \mathcal{D}^{1/2})^{-\infty}.$$
%$$C^{-\infty}_c(G/H,\overline{\mathcal{W}}\otimes \mathcal{D}^{1/2}) \subset L^2(G/H,\overline{\mathcal{W}}\otimes \mathcal{D}^{1/2})^{-\infty}.$$
Therefore, since the distributions $\delta_{H}(w_1,\omega_1)$ and $\delta_H(w_2,\omega_2)$ are supported at a single point, they are compactly supported and by the above inclusion they both define distribution vectors for the representation $L^2(G/H,\mathcal{W}\otimes \mathcal{D}^{1/2})$.

\begin{lemma} \label{identityvector} The distribution on $G$ defined by the generalized matrix coefficient $$(\pi(g)\delta_H(w_1,\omega_1),\delta_H(w_2,\omega_2))$$ (see Lemma \ref{distvec}) is equal to the generalized function on $G$ defined by
$$\mu\mapsto |\det(\operatorname{Ad}(h)|_{\mathfrak{g}/\mathfrak{h}})|\cdot ((\tau(h)w_1,w_2)\omega, \mu)$$
where $\mu$ is a smooth, compactly supported section of the density bundle on $G$ and $\omega=\omega_1\omega_2\in \mathcal{D}_H(G/H)^*$.
\end{lemma}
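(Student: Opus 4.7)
The plan is to compute the distribution $\mu \mapsto (\pi(\mu)\delta_H(w_1, \omega_1), \delta_H(w_2, \omega_2))$ on $G$ by directly unwinding the induced representation. By Lemma \ref{distvec} together with the construction of the generalized matrix coefficient, this distribution equals the pairing of the smooth vector $\pi(\mu)\delta_H(w_1, \omega_1) \in L^2(G/H, \mathcal{W} \otimes \mathcal{D}^{1/2})^{\infty}$ against the compactly supported distributional section $\delta_H(w_2, \omega_2)$. Since the latter is concentrated at the identity coset, the pairing reduces to evaluating $\pi(\mu) \delta_H(w_1, \omega_1)$ at $H \in G/H$ and pairing its fiber value with $w_2 \otimes \omega_2$.

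First I would compute $[\pi(\mu) \delta_H(w_1, \omega_1)](H)$. Writing $\pi(\mu) = \int_G \pi(g)\, d\mu(g)$ formally, the distribution $\pi(g)\delta_H(w_1, \omega_1)$ is supported at the coset $gH$, so it pairs nontrivially with evaluation at $H$ only along the fiber $g \in H$. Using the identification $\mathcal{W} = G \times_H W$, bundle transport along $h \in H$ sends $w_1$ at $H$ to $\tau(h) w_1$ at $hH = H$, producing the matrix coefficient $(\tau(h)w_1, w_2)$. The two half-density pieces at the identity coset pair to produce the transverse density $\omega = \omega_1\omega_2$, and after integrating over the fiber $H$ the result is a generalized function on $G$ supported on $H$, of precisely the form described in the construction of $f\omega$ earlier in the section.

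The main technical ingredient, and the source of the factor $|\det(\operatorname{Ad}(h)|_{\mathfrak{g}/\mathfrak{h}})|$, is a correct disintegration of the density $\mu$ on $G$ over the fibration $G \to G/H$ when evaluating $\pi(\mu)\delta_H(w_1, \omega_1)$ at $H$. Concretely, one must reconcile two identifications of the transverse tangent space at $h \in H$ with $\mathfrak{g}/\mathfrak{h} = T_H(G/H)$. Using left translation, $d\pi_h \circ dL_h\colon \mathfrak{g} \to \mathfrak{g}/\mathfrak{h}$ is the map $X \mapsto \operatorname{Ad}(h) X \pmod{\mathfrak{h}}$ (since $h\exp(tX)H = \exp(t\operatorname{Ad}(h)X)H$), whereas at the identity the corresponding map is simply the projection $\mathfrak{g}\to\mathfrak{g}/\mathfrak{h}$ used in the definition of $\omega$. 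The Jacobian of this change of basis on $\mathfrak{g}/\mathfrak{h}$ is precisely $|\det(\operatorname{Ad}(h)|_{\mathfrak{g}/\mathfrak{h}})|$, and it appears inside the integral over $H$ when we convert the density $\mu_h$ restricted to $\mathfrak{h}$-directions into the density $\mu\omega$ on $H$ defined earlier.

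Modulo this careful bookkeeping of half-densities and Jacobians, the identity follows from a direct unwinding of definitions. I expect the bookkeeping step, namely the consistent choice of tangent-space trivializations at the identity versus at a general $h\in H$ and the correct placement of the $\operatorname{Ad}(h)$ factor, to be the only genuine obstacle; once a single convention is fixed consistently for $\mathcal{D}^{1/2}$, the matching of the two sides of the claimed equality becomes a formal calculation.
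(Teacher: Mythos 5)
Your proposal is correct and follows essentially the same route as the paper: pair $\pi(\mu)\delta_H(w_1,\omega_1)$ with $\delta_H(w_2,\omega_2)$ by evaluating the smooth vector at the identity coset, disintegrate the density $\mu$ over the fibration $G\to G/H$ so that only the fiber over $H$ contributes, and track the Jacobian $|\det(\operatorname{Ad}(h)|_{\mathfrak{g}/\mathfrak{h}})|$ arising from the $H$-action on the transverse (half-)density data at the identity coset. The paper books this factor as the action $h\cdot\omega_1=|\det(\operatorname{Ad}(h)|_{\mathfrak{g}/\mathfrak{h}})|\,\omega_1$ rather than in the fiber measure, but this is the same piece of bookkeeping you identify.
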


\begin{proof} We will prove the Lemma by directly analyzing the generalized matrix coefficient $(\pi(g)\delta_H(w_1,\omega_1),\delta_H(w_2,\omega_2))$. Fix $\mu\in C_c^{\infty}(G,\mathcal{D}(G))$ a smooth, compactly supported density on $G$. By Lemma \ref{distvec},
$$\pi(\mu)\delta_H(w_1,\omega_1)$$ is a smooth vector in 
$$L^2(G/H,\mathcal{W}\otimes \mathcal{D}^{1/2})^{\infty}\subset C^{\infty}(G/H,\mathcal{W}\otimes \mathcal{D}^{1/2}).$$ Pairing it with $\delta_H(w_2,\omega_2)$ means evaluating this smooth function at $\{H\}$ and pairing it with $w_2\otimes \omega_2$. First, we wish to analyze the smooth function 
$\pi(\mu)\delta_H(w_1,\omega_1)$ by pairing it with $\psi\in C_c^{\infty}(G/H,\overline{\mathcal{W}}\otimes \mathcal{D}^{1/2})$. We have
$$\langle \pi(\mu)\delta_H(w_1,\omega_1), \psi\rangle=\int_G (w_1\otimes \omega_1,L_{g^{-1}}\psi(\overline{g})) d\mu(g).$$
Now, $\omega\in \mathcal{D}_H(G/H)^*$ is a vector in the dual of fiber of the density bundle on $G/H$ above $H$. We let $\omega^*\in \mathcal{D}_H(G/H)$ be the unique vector so that $\langle \omega^*,\omega\rangle=1$. Moreover, extend $\omega^*$ to a nonvanishing section $\widetilde{\omega^*}$ of the complex density bundle on $G/H$. Now, if $\varphi\in C_c^{\infty}(G)$, then instead of integrating $\varphi \mu$ over $G$, we wish to integrate over the fibers of the fibration 
$$G\rightarrow G/H$$
which are simply the cosets $xH$ and then integrate against $\widetilde{\omega^*}$ along the base. One sees that for every $gH\in G/H$, there exists a smooth density $\eta_{gH}\in C^{\infty}(gH, \mathcal{D}(gH))$ such that
$$\int_{G} \varphi(g)\mu(g)=\int_{G/H}\left(\int_H \varphi(gh) d\eta_{gH}(h)\right)d\widetilde{\omega^*}(\overline{g}).$$
In addition, note $\eta_{H}\omega^*=\mu$ and $\eta_{H}=\mu \omega$.
We apply this integration formula for
$$\varphi(g)=(w_1\otimes \omega_1,L_{g^{-1}}\psi(\overline{g})).$$
Thus, we obtain
$$\langle \pi(\mu)\delta_H(w_1,\omega_1), \psi\rangle$$
$$=\int_{G/H}\left[\int_H (w_1\otimes \omega_1,L_{(gh)^{-1}}\psi(\overline{g})) d\eta_{gH}(h)\right]d\widetilde{\omega^*}(\overline{g})$$
$$=\int_{G/H}\left(\int_H L_g(\tau(h)w_1\otimes h\cdot \omega_1) d\eta_{gH}(h),\psi(\overline{g})\right)d\widetilde{\omega^*}(\overline{g}).$$

One sees the distribution $\pi(\mu)\delta_H(w_1,\omega_1)$ is the smooth function with values in the bundle $\mathcal{W}\otimes \mathcal{D}(G/H)^{1/2}$ given by
$$g\mapsto \left(\int_H L_g(\tau(h)w_1\otimes h\cdot \omega_1)d\eta_{gH}(h)\right)\cdot \widetilde{\omega^*}.$$
Evaluating at $\{H\}$ yields 
$$\left(\int_H \tau(h)w_1\otimes h\cdot \omega_1d\eta_{H}(h)\right)\cdot \omega^*.$$
Now, $h\cdot \omega_1=|\det(\operatorname{Ad}(h)|_{\mathfrak{g}/\mathfrak{h}})|\cdot \omega_1$. Pairing with $w_2\otimes \omega_2$ yields
$$\int_{H} |\det(\operatorname{Ad}(h)|_{\mathfrak{g}/\mathfrak{h}})|(\tau(h)w_1,w_2)\langle \omega_1\omega^*,\omega_2\rangle\eta_{H}(h)$$
$$=\langle |\det(\operatorname{Ad}(h)|_{\mathfrak{g}/\mathfrak{h}})|\cdot (\tau(h)w_1,w_2), \mu \omega\rangle.$$
Here we have used $\langle \omega_1\omega_2,\omega^*\rangle=1$ and $\eta_{H}=\mu\omega$. The Lemma follows.
\end{proof}

Now, we are ready to prove Theorem \ref{inducedintro}.

\begin{proof} Let $w_1, w_2\in W$ be two vectors, and let $(\tau(h)w_1,w_2)$ be the corresponding matrix coefficient of $(\tau,W)$. To prove the Theorem, it is enough to show
$$\operatorname{WF}(L^2(G/H,\mathcal{W}\otimes \mathcal{D}^{1/2}))\supset q^{-1}(\operatorname{WF}_e(\tau(h)w_1,w_2))$$
and
$$\operatorname{SS}(L^2(G/H,\mathcal{W}\otimes \mathcal{D}^{1/2}))\supset q^{-1}(\operatorname{SS}_e(\tau(h)w_1,w_2)).$$

Let $V=L^2(G/H,\mathcal{W}\otimes \mathcal{D}^{1/2})$ and recall the equalities
$$\operatorname{WF}(\pi)=\overline{\bigcup_{\zeta, \eta\in V^{-\infty}} \operatorname{WF}_e(\pi(g)\zeta,\eta)}$$
and 
$$\operatorname{SS}(\pi)=\overline{\bigcup_{\zeta, \eta\in V^{-\infty}} \operatorname{SS}_e(\pi(g)\zeta,\eta)}$$
from Proposition \ref{wfdistvec}.

To prove the Theorem, it is therefore enough to show
$$\operatorname{WF}_e(\pi(g)\delta_H(w_2,\omega_2),\delta_H(w_1,\omega_1))=q^{-1}(\operatorname{WF}_e( \tau(h)w_1,w_2))$$
and
$$\operatorname{SS}_e(\pi(g)\delta_H(w_2,\omega_2),\delta_H(w_1,\omega_1))=q^{-1}(\operatorname{SS}_e( \tau(h)w_1,w_2)).$$
 
By Lemma \ref{identityvector}, we know $(\pi(\mu)\delta_H(w_2,\omega_2),\delta_H(w_1,\omega_1))$ is simply 
$$\langle |\det(\operatorname{Ad}(h)|_{\mathfrak{g}/\mathfrak{h}})|\cdot(\tau(h)w_1,w_2),\omega \mu\rangle.$$

Now, to compute the wave front set and singular spectrum of this generalized function, we fix a subspace $S\subset \mathfrak{g}$ such that $S\oplus \mathfrak{h}=\mathfrak{g}$. Then we can work locally in exponential coordinates $S\times \mathfrak{h}\rightarrow \mathfrak{g}$ and forget about densities (since the density bundle is locally trivial). 
In these coordinates, our generalized function is 
$$\delta_0\otimes |\det(\operatorname{Ad}(\exp Y)|_{\mathfrak{g}/\mathfrak{h}})|\cdot(\tau(\exp Y)w_1,w_2)$$ with $Y\in \mathfrak{h}$.
Now, $|\det(\operatorname{Ad}(\exp Y)|_{\mathfrak{g}/\mathfrak{h}})|$ is an analytic, nonzero function in a neighborhood of zero. Therefore, it is enough to compute the wave front set and singular spectrum of 
$$\delta_0\otimes (\tau(\exp Y)w_1,w_2).$$
Now, suppose we have open neighborhoods $0\in U_1\subset S$, $0\in U_2\subset \mathfrak{h}$ and functions $\varphi_1\in C_c^{\infty}(U_1)$, $\varphi_2\in C_c^{\infty}(U_2)$ with $\varphi_1(0)\neq 0$, $\varphi_2(0)\neq 0$. Multiplying our distribution $\delta_0\otimes (\tau(\exp Y)w_1,w_2)$ by the tensor product $\varphi_1\otimes \varphi_2$ and taking the Fourier transform yields
$$\varphi_1(0)\otimes \mathcal{F}[\varphi_2(\tau(\exp Y)w_1,w_2)].$$
The first term is never rapidly decreasing in any direction in $iS^*$ regardless of the choice of $U_1$ and $\varphi_1$. The second term is rapidly decreasing in a direction $\xi\in i\mathfrak{h}^*$ for all $\varphi_2\in C_c^{\infty}(U_2)$ for some neighborhood $0\in U_2\subset i\mathfrak{h}^*$ if and only if $\xi\notin \operatorname{WF}_e(\tau(h)w_1,w_2)$. It follows from the discussion on page 254 of \cite{Hor} that we can compute the wave front set of $\delta_0\otimes (\tau(\exp Y)w_1,w_2)$ utilizing neighborhoods of the form $U_1\times U_2$ and smooth functions of the form $\varphi_1\otimes \varphi_2$. Hence, we deduce
$$\operatorname{WF}_0(\delta_0 \otimes (\tau(\exp Y)w_1,w_2))=iS^*\times \operatorname{WF}_e(\tau(h)w_2,w_2).$$
However, this product description of the wave front set requires a non-canonical splitting of the exact sequence 
$$0\rightarrow \mathfrak{h}\rightarrow \mathfrak{g}\rightarrow \mathfrak{g}/\mathfrak{h}\rightarrow 0.$$ 
A more canonical way of writing the same thing is
$$\operatorname{WF}_e(\pi(g)\delta_H(w_2,\omega_2),\delta_H(w_1,\omega_1))=q^{-1}(\operatorname{WF}_e(\tau(h)w_1,w_2)) .$$
The first statement of Theorem \ref{inducedintro} now follows.

To compute the singular spectrum, we work in the same non-canonical, exponential coordinates. We fix precompact, open neighborhoods $0\in U_1\times U_2\subset U'_1\times U'_2\subset S\times \mathfrak{h}$ with $U_1$ (resp. $U_2$) compactly contained in $U$ (resp. $U'$). We fix $\varphi_i\in C_c^{\infty}(U_i')$ such that $\varphi_i$ is one on $U_i$ for $i=1,2$. Let 
$$\mathcal{G}_t(s)=e^{-ts^2}$$ be the standard family of Gaussians on $\mathbb{R}$. Now, we multiply 
$$\delta_0\otimes (\tau(\exp Y)w_1,w_2)$$ 
by $\varphi_1\otimes \varphi_2$ and $\mathcal{G}_t(|Z|)\otimes \mathcal{G}_t(|Y|)=\mathcal{G}_t(|Z+Y|)$ and we take the Fourier transform and evaluate at $t\zeta$ (Here we assume that $|\cdot|$ is a norm coming from an inner product for which the subspaces $S$ and $\mathfrak{h}$ are orthogonal). We obtain
$$\varphi_1(0)\otimes \mathcal{F}[\mathcal{G}_t(|Y|)\varphi_2(\tau(\exp Y)w_1,w_2)](t\zeta).$$
The first term is never exponentially decaying anywhere in $iS^*$. The second term is exponentially decaying precisely when the singular spectrum of $(\tau(\exp Y)w_1,w_2)$ does not contain $\zeta$ by definition. Thus, we obtain
$$\operatorname{SS}_0(\delta_0 \otimes (\tau(\exp Y)w_1,w_2))=iS^*\times \operatorname{SS}_e(\tau(h)w_2,w_2).$$
However, this product description of the singular spectrum requires a non-canonical decomposition $\mathfrak{g}=S\oplus \mathfrak{h}$. A more canonical way of writing the same thing is
$$\operatorname{SS}_e(\pi(g)\delta_H(w_2,\omega_2),\delta_H(w_1,\omega_1))=q^{-1}(\operatorname{SS}_e(\tau(h)w_1,w_2)).$$
The second statement of Theorem \ref{inducedintro} now follows.
\end{proof}

\section{A Parametrization of Irreducible, Tempered Representations of a Real, Reductive Algebraic Group}
\label{sec:temp}

In order to prove Theorem \ref{regularintro}, we will need a parametrization of the set of irreducible, tempered representations of a real, reductive algebraic group. In fact, a well-thought parametrization is given in section 6 of \cite{ALTV} (see also \cite{KnZ} for the original reference). For the convenience of the reader, we briefly summarize this parametrization in this section. We also prove a Lemma in this section which will be useful in subsequent sections.

Suppose $G$ is a real, reductive algebraic group with Lie algebra $\mathfrak{g}$. We say that a triple $\Gamma=(H,\gamma,R_{i\mathbb{R}}^+)$ is a \emph{Langlands parameter} for $G$ if $H\subset G$ is a Cartan subgroup with Lie algebra $\mathfrak{h}$, $\gamma$ is a level one character of the $\rho_{\text{abs}}$ double cover of $H$ (see Section 5 of \cite{ALTV} for an explanation), and $R_{i\mathbb{R}}^+$ is a choice of positive roots among the set of imaginary roots for $\mathfrak{g}$ with respect to $\mathfrak{h}$ for which $d\gamma\in \mathfrak{h}_{\mathbb{C}}^*$ is weakly dominant. In addition, the triple $\Gamma=(H,\gamma,R_{i\mathbb{R}}^+)$ must satisfy a couple of additional conditions (see Theorem 6.1 of \cite{ALTV}).

The Cartan subgroup $H=TA$ has a decomposition into a product of a maximal compact subgroup $T\subset H$ and a maximal split subgroup $A\subset H$. There is an analogous decomposition of a Langlands parameter $\Gamma=(H,\gamma,R_{i\mathbb{R}}^+)$ into discrete and continuous pieces. In particular, a \emph{discrete Langlands parameter} is a triple $\Lambda=(T,\lambda,R_{i\mathbb{R}}^+)$ for which $\lambda$ is a level one character of the $\rho_{i\mathbb{R}}$ double cover of $T$ (see section 5 of \cite{ALTV} for a definition), and $R_{i\mathbb{R}}^+$ is a choice of positive roots for the set of imaginary roots of $\mathfrak{g}$ with respect to $\mathfrak{h}$ for which $d\lambda\in \mathfrak{h}_{\mathbb{C}}^*$ is weakly dominant. This triple must satisfy a few additional assumptions (see Definition 6.5 of \cite{ALTV} for the details). If $\Lambda$ is a discrete Langlands parameter, then a \emph{continuous parameter} for $\Lambda$ is a pair $(A,\nu)$ for which $\nu$ is a (possibly nonunitary) character of $A$. The pair $(A,\nu)$ must also satisfy some additional assumptions (see Definition 6.5 of \cite{ALTV}).

If $\Gamma$ is a Langlands parameter for $G$, then $\Gamma=(H,\gamma,R_{i\mathbb{R}}^+)$ has a decomposition into a discrete Langlands parameter $\Lambda=(T,\lambda:=\gamma|_T,R_{i\mathbb{R}}^+)$ and a continuous parameter $(A,\nu:=\gamma|_A)$. If $MA$ is the Langlands decomposition of $Z_G(A)$, then there is a limit of discrete series representation $D(\Lambda)$ of $M$ associated to $\Lambda$ (see Section 9 of \cite{ALTV} or page 430 of \cite{Kn}). Then $D(\Lambda)\otimes \nu$ is a representation of $MA$. Hence, we may choose a parabolic subgroup $P=MAN$ making the real part of $\nu$ weakly dominant for the weights of $\mathfrak{a}$, the Lie algebra of $A$, in $\mathfrak{n}$, the Lie algebra of $N$, and we may extend $D(\Lambda)\otimes \nu$ trivially on $N$ to a representation of $P$. The representation of $G$ induced from the representation $D(\Lambda)\otimes \nu$ of $P$ is denoted by $I(\Lambda)$.

The (possibly nonunitary) representation $I(\Gamma)$ has a Langlands quotient $J(\Gamma)$ which is an irreducible (possibly nonunitary) representation of $G$. In addition, every irreducible (possibly nonunitary) representation of $G$ is of the form $J(\Gamma)$ for some Langlands parameter $\Gamma$, and two parameters $\Gamma$ and $\Gamma_1$ are $G$ conjugate if, and only if the two irreducible representations $J(\Gamma)$ and $J(\Gamma_1)$ are isomorphic (see \cite{La} for the original reference and Section 6 of \cite{ALTV}, Chapter 6 of \cite{V2}, and Chapters 8 and 14 of \cite{Kn} for expositions). If $\Gamma$ has discrete part $\Lambda$ and continuous part $(A,\nu)$, then we also write $J(\Lambda,\nu)$ for $J(\Gamma)$. The irreducible representation $J(\Lambda,\nu)$ is tempered if, and only if the character $\nu$ is unitary. Every irreducible, tempered representation of $G$ is unitary.

We write $\widehat{G}$ for the set of irreducible, unitary representations of $G$ equipped with the Fell topology. We denote by $\widehat{G}_{\text{temp}}$ the closed subspace of irreducible, tempered representations. If $H\subset G$ is a Cartan subgroup of $G$ with Lie algebra $\mathfrak{h}$ and $R_{i\mathbb{R}}^+$ is a choice of positive, imaginary roots for $\mathfrak{g}$ with respect to $\mathfrak{h}$, then we have a corresponding closed Weyl chamber
$$i\mathfrak{h}_+^*=\left\{\lambda\in i\mathfrak{h}^*|\ \langle \lambda,\alpha^{\vee}\rangle\geq 0\ \text{if}\ \alpha\in R_{i\mathbb{R}}^+\right\}$$
in $i\mathfrak{h}^*$ where whenever $\alpha\in R_{i\mathbb{R}}^+$ is an imaginary root of $\mathfrak{g}$ with respect to $\mathfrak{h}$, we denote by $\alpha^{\vee}$ the corresponding coroot. Then we may define
$$\widehat{G}_{\text{temp},i\mathfrak{h}^*_+}$$
to be the set of irreducible, tempered representations of the form $J(\Gamma)$ where $\Gamma=(H,\gamma,R_{i\mathbb{R}}^+)$ and $(H,R_{i\mathbb{R}}^+)$ corresponds to the closed Weyl chamber $i\mathfrak{h}^*_+$. We therefore obtain 
$$\widehat{G}_{\text{temp}}=\bigcup_{i\mathfrak{h}^*_+}\widehat{G}_{\text{temp},i\mathfrak{h}^*_+}$$
where the union is over $G$ conjugacy classes of closed Weyl chambers $i\mathfrak{h}^*_+$ in the dual of $i$ times the collection of purely imaginary valued linear functionals on a Cartan subalgebra $\mathfrak{h}\subset \mathfrak{g}$. 

Now, if $\pi$ is a unitary representation of $G$ that is weakly contained in the regular representation, then we may write $\pi$ as a direct integral of irreducibe, tempered representations
$$\pi \cong \int^{\oplus} \sigma^{\oplus m(\pi,\sigma)}d\mu_{\pi}$$
with respect to a positive measure $\mu$ on $\widehat{G}_{\text{temp}}$ (see for instance Chapter 8 of \cite{Di} for a proof that this is always possible) and a function $m(\pi,\sigma)$ that keeps track of the multiplicity of $\sigma$ in $\pi$. For each closed Weyl chamber $i\mathfrak{h}^*_+\subset i\mathfrak{h}^*$ in $i$ times the dual of a Cartan subalgebra $\mathfrak{h}\subset \mathfrak{g}$, define $$\pi_{i\mathfrak{h}^*_+}\cong \int_{\sigma \in \widehat{G}_{\text{temp},i\mathfrak{h}^*_+}}\sigma^{\oplus m(\pi,\sigma)} d\mu_{\pi}|_{\widehat{G}_{\text{temp},i\mathfrak{h}^*_+}}.$$
Then we have an isomorphism
$$\pi\cong \bigoplus_{i\mathfrak{h}^*_+} \pi_{i\mathfrak{h}^*_+}$$
where the sum is over the finite collection of $G$ conjugacy classes of closed Weyl chambers in $i\mathfrak{h}^*$ for some Cartan subalgebra $\mathfrak{h}\subset \mathfrak{g}$.

\begin{lemma} \label{Lem:WeylChamber_Reduction} If $\pi$ is a unitary representation of a real, reductive algebraic group $G$ that is weakly contained in the regular representation, then 
$$\operatorname{WF}(\pi)=\bigcup_{i\mathfrak{h}^*_+} \operatorname{WF}(\pi_{i\mathfrak{h}^*_+}),\ \operatorname{SS}(\pi)=\bigcup_{i\mathfrak{h}^*_+} \operatorname{SS}(\pi_{i\mathfrak{h}^*_+})$$
where the unions are over the finite collection of $G$ conjugacy classes of closed Weyl chambers in $i\mathfrak{h}^*$ for some Cartan subalgebra $\mathfrak{h}\subset \mathfrak{g}$. In addition, we have
$$\operatorname{AC}\left(\mathcal{O}\operatorname{-}\operatorname{supp}\pi\right)=\bigcup_{i\mathfrak{h}^*_+} \operatorname{AC}\left(\mathcal{O}\operatorname{-}\operatorname{supp}\pi_{i\mathfrak{h}^*_+}\right)$$
where the union is again over the finite collection of $G$ conjugacy classes of closed Weyl chambers in $i\mathfrak{h}^*$ for some Cartan subalgebra $\mathfrak{h}\subset \mathfrak{g}$.
\end{lemma}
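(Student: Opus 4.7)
The plan is to reduce the lemma to three elementary distributivity properties of finite orthogonal direct sums. The decomposition $\pi\cong \bigoplus_{i\mathfrak{h}^*_+}\pi_{i\mathfrak{h}^*_+}$ displayed immediately before the statement is a \emph{finite} orthogonal direct sum, because $G$ has only finitely many Cartan subgroups up to conjugacy and each Cartan contributes only finitely many Weyl chambers in $i\mathfrak{h}^*$. So it suffices to prove that for any finite orthogonal direct sum of unitary representations $\rho=\rho_1\oplus\cdots\oplus\rho_r$, all three operations $\operatorname{WF}(\cdot)$, $\operatorname{SS}(\cdot)$, and $\operatorname{AC}(\mathcal{O}\operatorname{-}\operatorname{supp}(\cdot))$ distribute over the summands.

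For the wave front set and singular spectrum identities, I would exploit the fact that a matrix coefficient of $\rho$ takes the form $(\rho(g)u,v)=\sum_i(\rho_i(g)u_i,v_i)$ with $u=(u_i)$, $v=(v_i)$. The inclusion $\operatorname{WF}(\rho)\subset\bigcup_i\operatorname{WF}(\rho_i)$ then follows from the standard fact that the wave front set (respectively singular spectrum) of a finite sum of distributions is contained in the union of the wave front sets (respectively singular spectra) of the summands, combined with the already-proved closedness of each $\operatorname{WF}(\rho_i)$ and $\operatorname{SS}(\rho_i)$. For the reverse inclusion, take $u$ and $v$ with all components zero except in the $i$th summand; these realize arbitrary matrix coefficients of $\rho_i$ as matrix coefficients of $\rho$, so $\operatorname{WF}_e(\rho_i(g)u_i,v_i)\subset\operatorname{WF}(\rho)$ and hence $\operatorname{WF}(\rho_i)\subset\operatorname{WF}(\rho)$. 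The identical argument, with $\operatorname{SS}$ in place of $\operatorname{WF}$, settles the singular spectrum case.

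For the orbital support statement, the support of a finite direct sum of unitary representations equals the union of the supports of the summands, so $\mathcal{O}\operatorname{-}\operatorname{supp}\pi=\bigcup_{i\mathfrak{h}^*_+}\mathcal{O}\operatorname{-}\operatorname{supp}\pi_{i\mathfrak{h}^*_+}$. It then remains to verify that $\operatorname{AC}(A_1\cup\cdots\cup A_r)=\operatorname{AC}(A_1)\cup\cdots\cup\operatorname{AC}(A_r)$ for any finite collection of subsets of a finite dimensional real vector space. The containment $\supset$ is immediate from the definition of $\operatorname{AC}$; for $\subset$, if $\xi$ belongs to no $\operatorname{AC}(A_i)$, one chooses open cones $\mathcal{C}_i\ni\xi$ with $\mathcal{C}_i\cap A_i$ bounded, and the intersection $\bigcap_i\mathcal{C}_i$ is still an open cone about $\xi$ witnessing $\xi\notin\operatorname{AC}(\bigcup_i A_i)$. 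There is no real obstacle in the proof; the lemma is a packaging statement whose purpose is to permit the main arguments of the subsequent sections to be carried out chamber by chamber.
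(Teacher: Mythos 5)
Your proof is correct and follows essentially the same route as the paper: both reduce the lemma to the distributivity of $\operatorname{WF}$, $\operatorname{SS}$, and $\operatorname{AC}\left(\mathcal{O}\operatorname{-}\operatorname{supp}(\cdot)\right)$ over the finite orthogonal decomposition $\pi\cong\bigoplus_{i\mathfrak{h}^*_+}\pi_{i\mathfrak{h}^*_+}$. The only cosmetic differences are that the paper cites Howe's Proposition 1.3(b) for the wave front set identity where you argue directly from matrix coefficients, and that the paper proves $\operatorname{AC}\left(\bigcup_i S_i\right)=\bigcup_i\operatorname{AC}(S_i)$ by a nested-family argument where you use the cleaner contrapositive with a finite intersection of open cones.
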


The first statement of the Lemma regarding wave front sets follows from part (b) of Proposition 1.3 on page 121 of \cite{How}. The argument in the singular spectrum case is analogous. To show the second statement, it is enough to show that whenever $S_1,\ldots,S_n\subset W$ is a finite collection of subsets of a finite-dimensional, real vector space $W$, we have 
$$\operatorname{AC}\left(\bigcup_{i=1}^nS_i\right)=\bigcup_{i=1}^n\operatorname{AC}(S_i).$$
Indeed, $S_i\subset \cup S_i$ implies $\operatorname{AC}(S_i)\subset \operatorname{AC}(\cup S_i)$ and the right hand side is contained in the left hand side. To show the opposite inclusion, suppose $\xi$ is in the set on the left. Fix a norm on $W$, and define
$$\Gamma_{\epsilon}=\{\eta\in W|\ |t\eta-\xi|<\epsilon\ \text{some}\ t>0\}$$
for every $\epsilon>0$. Since $\xi$ is in the set on the left,
$\Gamma_{\epsilon}\cap \bigcup_{i=1}^nS_i$
is unbounded. But, then certainly $\Gamma_{\epsilon} \cap S_i$ is unbounded for some $i$. Let the subcollection $I_{\epsilon}\subset \{1,\ldots,n\}$ be the set of $i$ such that $\Gamma_{\epsilon} \cap S_i$ is unbounded. Now, $I_{\epsilon}$ is non-empty for every $\epsilon>0$ and $I_{\epsilon'}\subset I_{\epsilon}$ if $\epsilon'<\epsilon$. One deduces that there is some $i$ in every $I_{\epsilon}$ and $\xi\in \operatorname{AC}(S_i)$ for this particular $i$. The statement follows.

Since $$\mathcal{O}\operatorname{-}\operatorname{supp}\pi=\bigcup_{i\mathfrak{h}^*_+} \left(\mathcal{O}\operatorname{-}\operatorname{supp}\pi_{i\mathfrak{h}^*_+}\right),$$
we deduce $$\operatorname{AC}\left(\mathcal{O}\operatorname{-}\operatorname{supp}\pi\right)=\operatorname{AC}\left(\bigcup_{i\mathfrak{h}^*_+} \left(\mathcal{O}\operatorname{-}\operatorname{supp}\pi_{i\mathfrak{h}^*_+}\right)\right).$$
The Lemma follows.

\section{Wave Front Sets of Pieces of the Regular Representation Part I}
\label{sec:regular_part1}

Our next task is to prove Theorem \ref{regularintro}. Suppose $G$ is a real, reductive algebraic group, and suppose $\pi$ is weakly contained in the regular representation of $G$. Then we must show
$$\operatorname{SS}(\pi)=\operatorname{WF}(\pi)=\operatorname{AC}\left(\mathcal{O}\operatorname{-}\operatorname{supp}\pi\right).$$
However, given that $\operatorname{SS}(\pi)\supset \operatorname{WF}(\pi)$, it is enough to show
$$\operatorname{WF}(\pi)\supset \operatorname{AC}\left(\mathcal{O}\operatorname{-}\operatorname{supp}\pi\right)$$
and 
$$\operatorname{SS}(\pi)\subset \operatorname{AC}\left(\mathcal{O}\operatorname{-}\operatorname{supp}\pi\right).$$

This section will be devoted to proving the first inclusion. The next section will be devoted to proving the second inclusion.

\begin{proposition} \label{regularwf} Suppose $G$ is a real, reductive algebraic group, and suppose $\pi$ is a unitary representation of $G$ that is weakly contained in the regular representation of $G$. Then
$$\operatorname{WF}(\pi)\supset \operatorname{AC}\left(\mathcal{O}\operatorname{-}\operatorname{supp}\pi\right).$$
\end{proposition}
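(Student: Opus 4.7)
The plan is to prove $\operatorname{WF}(\pi)\supset\operatorname{AC}(\mathcal{O}\text{-}\operatorname{supp}\pi)$ by realizing $\pi$ as a subrepresentation of a parabolically induced representation, applying the identity-coset distribution-vector computation from the proof of Theorem \ref{inducedintro}, and reading off the asymptotic cone from the spectral parameters of the inducing data.

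First, by Lemma \ref{Lem:WeylChamber_Reduction} I may reduce to the case $\pi=\pi_{i\mathfrak{h}_+^*}$ for a single $G$-conjugacy class of Weyl chamber. This fixes a Cartan $H=TA$ and a cuspidal parabolic $P=MAN$ with $Z_G(A)=MA$; every $\sigma\in\operatorname{supp}\pi$ has the form $J(\Lambda_\sigma,\nu_\sigma)$ and occurs as a direct summand of $I(\Lambda_\sigma,\nu_\sigma)=\operatorname{Ind}_P^G(D(\Lambda_\sigma)\otimes\nu_\sigma)$, with Duflo--Rossmann orbit $\mathcal{O}_\sigma=\operatorname{Ad}^*(G)\cdot q_P^{-1}(\mathcal{O}_{D(\Lambda_\sigma)}\oplus\{d\nu_\sigma\})$, where $q_P\colon i\mathfrak{g}^*\to i\mathfrak{p}^*$ is the pullback. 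Because unitary induction commutes with direct integrals, the decomposition of $\pi$ lifts to a $G$-equivariant isometric embedding $\pi\hookrightarrow\tilde\pi\cong\operatorname{Ind}_P^G\rho$, where $\rho=\int^\oplus(D(\Lambda_\sigma)\otimes\nu_\sigma)^{m(\pi,\sigma)}\,d\mu_\pi(\sigma)$.

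Second, by Proposition \ref{wfdistvec} it suffices to build distribution vectors of $\pi$ with sufficiently rich matrix coefficients. I would take $\zeta=\int^\oplus\zeta_\sigma\,d\mu_\pi(\sigma)\in V_\pi^{-\infty}$ where each $\zeta_\sigma$ is the image under the summand embedding $\sigma\hookrightarrow I(\Lambda_\sigma,\nu_\sigma)$ of an identity-coset distribution vector $\delta_P(w_\sigma,\omega_1)$ from Section \ref{sec:wf_induced}, for measurable fields $\sigma\mapsto w_\sigma$ inside $D(\Lambda_\sigma)\otimes\nu_\sigma$; similarly for $\eta$ with fields $u_\sigma$. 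Applying Lemma \ref{identityvector} fiberwise and integrating over the spectral parameter, the matrix coefficient $(\pi(g)\zeta,\eta)$ is the distribution on $G$ supported on $P$ equal there to
\[
|\det(\operatorname{Ad}(h)|_{\mathfrak{g}/\mathfrak{p}})|\cdot\omega\cdot\int_{\operatorname{supp}\pi}((D(\Lambda_\sigma)\otimes\nu_\sigma)(h)w_\sigma,u_\sigma)\,d\mu_\pi(\sigma),
\]
and the wave-front-set computation at the end of the proof of Theorem \ref{inducedintro} identifies $\operatorname{WF}_e((\pi(g)\zeta,\eta))=q_P^{-1}(\operatorname{WF}_e(\Phi))$, where $\Phi$ denotes the $P$-side spectral integral.

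Third, the restriction of $\Phi$ to the abelian factor $A$ is a Fourier--Laplace transform of the pushforward of the weighted measure to $i\mathfrak{a}^*$ under $\sigma\mapsto d\nu_\sigma$, and a Fourier-analytic argument, analogous to the abelian fact that the wave front set at the origin of the Fourier transform of a tempered measure $\mu$ equals $\operatorname{AC}(\operatorname{supp}\mu)$, identifies the $A$-part of $\operatorname{WF}_e(\Phi)$ with the asymptotic cone of the continuous parameters in $\operatorname{supp}\pi$. Probing the discrete-series factor $D(\Lambda_\sigma)$ of $M$ with varying $w_\sigma,u_\sigma$ and invoking the known formula $\operatorname{WF}(D(\Lambda))=\operatorname{AC}(\mathcal{O}_{D(\Lambda)})$ for limits of discrete series of $M$ captures the remaining $d\lambda_\sigma$ and $M$-nilpotent directions. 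Combining, pulling back by $q_P^{-1}$, taking closures, and spreading by $\operatorname{Ad}^*(G)$ realizes the full asymptotic cone $\operatorname{AC}(\mathcal{O}\text{-}\operatorname{supp}\pi)$ inside $\operatorname{WF}(\pi)$. The main obstacle is this last step: rigorously identifying the wave front set of the spectrally-integrated matrix coefficient $\Phi$ on $P$ with the asymptotic cone of spectral parameters requires both verifying the measurability of the identity-coset distribution-vector construction across the direct integral and ruling out cancellation within the spectral integral that would shrink $\operatorname{WF}_e(\Phi)$ below the asymptotic cone; the positive-definiteness of the matrix coefficients $(\pi(g)\zeta,\zeta)$ of the form constructed should be the key input preventing such cancellation.
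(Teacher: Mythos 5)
Your approach is genuinely different from the paper's (which never re-realizes $\pi$ inside a parabolically induced representation; it works with the character integral $\int_{\sigma}\Theta_{\sigma}\,d\mu_{\pi}$, shows via a trace-class operator $(I+\Omega_K)^{-N}P$ and Howe's characterization that its wave front set at $e$ lies in $\operatorname{WF}(\pi)$, and then bounds the Fourier transform $\int_{\sigma}\mathcal{O}_{\sigma}\,d\mu_{\pi}'$ from below against Gaussian wave packets using positivity of the orbital measures and an estimate comparing canonical and Euclidean measures on orbits). Unfortunately your route has a gap that I do not see how to close as written. The identity-coset distribution vectors $\delta_P(w_{\sigma},\omega_1)$ are distribution vectors of $I(\Lambda_{\sigma},\nu_{\sigma})$, hence of $\tilde\pi=\operatorname{Ind}_P^G\rho$, not of $\pi$. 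Since $\pi$ is in general a \emph{proper} subrepresentation of $\tilde\pi$, computing $\operatorname{WF}_e$ of matrix coefficients of $\tilde\pi$ only yields $\operatorname{WF}(\tilde\pi)\supset q_P^{-1}(\operatorname{WF}_e(\Phi))$, and the containment $\operatorname{WF}(\pi)\subset\operatorname{WF}(\tilde\pi)$ runs in the wrong direction for your conclusion. If instead you project $\delta_P(w_{\sigma},\omega_1)$ onto the summand $\sigma\subset I(\Lambda_{\sigma},\nu_{\sigma})$ so as to land in $V_{\pi}^{-\infty}$, then Lemma \ref{identityvector} no longer computes the matrix coefficient: the cross terms with the complementary summands are discarded, and you are back to exactly the cancellation/smallness problem you flag at the end. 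Positive-definiteness of $(\pi(g)\zeta,\zeta)$ does not by itself rescue this, because the danger is not cancellation between $u$ and $v$ but that the $\sigma$-components of the $\delta_P$'s may fail to see the relevant directions.

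A second, independent gap is the last paragraph. Even granting a formula for $\Phi$ on $MA$, you still need (i) that $\operatorname{WF}_e(\Phi)$ contains the asymptotic cone of the spectral parameters --- this is the quantitative heart of the matter and is precisely what the paper's Lemmas \ref{tempereddist}, \ref{wfcharacter}, and \ref{canonicalmeasure} plus the wave-packet lower bound supply; the ``abelian fact'' you cite requires a positive measure and a uniform lower bound on how much mass sits in a conic neighborhood at each scale $t_m$, which must be proved, not assumed --- and (ii) a set-theoretic identity relating $\operatorname{AC}\bigl(\bigcup_{\sigma}\overline{\operatorname{Ad}^*(G)\cdot q_P^{-1}(\cdots)}\bigr)$ to $\overline{\operatorname{Ad}^*(G)\cdot q_P^{-1}(\operatorname{AC}(\cdots))}$. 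Note that already for a single $\sigma$ the set $q_P^{-1}(\mathcal{O}_{D(\Lambda_{\sigma})}\oplus\{d\nu_{\sigma}\})$ is unbounded in the $(\mathfrak{g}/\mathfrak{p})^*$ directions, so the asymptotic cone of $\mathcal{O}\operatorname{-}\operatorname{supp}\pi$ contains directions not visible in the asymptotic cone of the $MA$-parameters; commuting $\operatorname{AC}$ with the induction operation is not automatic and is not addressed. The paper sidesteps both issues by working with the full coadjoint orbits $\mathcal{O}_{\sigma}\subset i\mathfrak{g}^*$ and their canonical measures from the start, and by using the vectors $e^{\Lambda}_{\tau,i}(\nu)$ chosen inside the direct integral decomposition of $\pi$ itself (via the compact picture) so that the character integral is honestly a sum of matrix coefficients of $\pi$.
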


By Lemma \ref{Lem:WeylChamber_Reduction}, in order to prove Proposition \ref{regularwf}, it is enough to show
$$\operatorname{WF}(\pi_{i\mathfrak{h}^*_+})\supset \operatorname{AC}(\mathcal{O}\operatorname{-}\operatorname{supp}\pi_{i\mathfrak{h}^*_+})$$
for every closed Weyl chamber in $i$ times the dual of a Cartan subalgebra. In particular, we may assume that there exists a Cartan subgroup $H\subset G$ and a choice of positive imaginary roots $R_{i\mathbb{R}}^+$ for $\mathfrak{g}$ with respect to $\mathfrak{h}$ such that all of the irreducible representations in the support of $\pi$ are of the form $J(\Gamma)$ with Langlands parameter $\Gamma=(H,\gamma,R_{i\mathbb{R}}^+)$ for some $\gamma$.

Next, we note that in the direct integral decomposition of $\pi$, the positive measure $\mu_{\pi}$ on $\widehat{G}_{\text{temp},i\mathfrak{h}^*_+}$ is only well-defined up to an equivalence relation. Here two measures are equivalent if, and only if they are absolutely continuous with respect to each other. After multiplying $\mu_{\pi}$ by a suitable non-vanishing function on $\widehat{G}_{\text{temp}}$, one can find a measure $\mu_{\pi}'$ which is equivalent to $\mu_{\pi}$ and has finite volume. Therefore, we will assume without loss of generality in what follows that $\mu_{\pi}$ has finite volume.

Next, we introduce a continuous map with finite fibers 
$$\widehat{G}_{\text{temp},i\mathfrak{h}^*_+}\longrightarrow i\mathfrak{h}^*_+$$
via 
$$J(\Gamma)\mapsto d\gamma$$
for every Langlands parameter $\Gamma=(H,\gamma,R_{i\mathbb{R}}^+)$ with $d\gamma\in i\mathfrak{h}^*_+$. In particular, we may take our finite, positive measure $\mu_{\pi}$ on $\widehat{G}_{\text{temp},i\mathfrak{h}^*_+}$ and push it forward to a finite, positive measure on $i\mathfrak{h}_+^*$. From now on, we will abuse notation and write $\mu_{\pi}$ for the measure on both spaces.

Let $j_G$ be the Jacobian of the exponential map $\exp\colon \mathfrak{g}\rightarrow G$ in a neighborhood of the identity; we normalize the Lebesgue measure on $\mathfrak{g}$ and the Haar measure on $G$ so that $j_G(0)=1$. Then $j_G$ extends to an analytic function on $\mathfrak{g}$. Moreover, it has a unique analytic square root $j_G^{1/2}$ with $j_G^{1/2}(0)=1$. 

\begin{lemma} \label{tempereddist} Let $\mu_{\pi}$ be a finite, positive measure on $i\mathfrak{h}_+^*$. For each $\sigma\in \widehat{G}_{\textup{temp}}$, let $\Theta_{\sigma}$ denote the Harish-Chandra character of $\sigma$ and let $$\theta_{\sigma}=(\exp^*\Theta_{\sigma})j_G^{1/2}$$
denote the Lie algebra analogue of the character of $\sigma$. If $f\in L^1(\widehat{G}_{\textup{temp},i\mathfrak{h}^*_+},\mu_{\pi})$ is a positive $L^1$ function, then 
$$\int_{\sigma\in \widehat{G}_{\textup{temp},i\mathfrak{h}^*_+}}\theta_{\sigma}f\mu_{\pi}$$
defines a tempered distribution on $\mathfrak{g}$.
\end{lemma}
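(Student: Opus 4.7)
The plan is to show $T(\varphi) := \int_{\sigma} \langle \theta_\sigma,\varphi\rangle f(\sigma)\,d\mu_\pi(\sigma)$ defines a continuous linear functional on $\mathcal{S}(\mathfrak{g})$. Since $\mu_\pi$ is a finite positive measure and $f\in L^1(\mu_\pi)$, this reduces to producing a uniform bound
$$\sup_{\sigma\in \widehat{G}_{\textup{temp},i\mathfrak{h}^*_+}}\left|\langle \theta_\sigma,\varphi\rangle\right|\leq C(\varphi),$$
where $C(\varphi)$ is continuous in $\varphi$ in the Schwartz topology. Once this is in hand, Fubini and $L^1$-dominated convergence immediately yield $|T(\varphi)|\leq \|f\|_{L^1(\mu_\pi)}\cdot C(\varphi)$, identifying $T$ with a tempered distribution on $\mathfrak{g}$.

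To produce this uniform bound, I would invoke Rossmann's character formula (cf.\ \cite{R1},\cite{R2}): for each irreducible tempered $\sigma$, the Lie-algebra character $\theta_\sigma$ coincides with the Euclidean Fourier transform of a canonical, tempered (Liouville-type) positive measure $\mu_{\mathcal{O}_\sigma}$ supported on the coadjoint orbit datum $\mathcal{O}_\sigma\subset i\mathfrak{g}^*$. Thus for each Schwartz $\varphi$ on $\mathfrak{g}$, Parseval gives
$$\langle \theta_\sigma,\varphi\rangle=\int_{\mathcal{O}_\sigma} \hat\varphi(\xi)\, d\mu_{\mathcal{O}_\sigma}(\xi).$$
Because we have restricted the parameter to a single Weyl chamber $i\mathfrak{h}^*_+$, the orbits $\mathcal{O}_\sigma$ all share a common dimension and structural type, so one expects uniform polynomial volume growth $\mu_{\mathcal{O}_\sigma}(B_R(0))\leq C R^{d}$ with $C,d$ independent of $\sigma$. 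Picking $N>d$ and using $|\hat\varphi(\xi)|\leq C_N(\varphi)(1+|\xi|)^{-N}$ for a suitable Schwartz seminorm $C_N(\varphi)$ of $\varphi$ yields the desired uniform estimate $|\langle \theta_\sigma,\varphi\rangle|\leq C\cdot C_N(\varphi)$.

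The main obstacle is the uniform polynomial-growth bound on the family $\{\mu_{\mathcal{O}_\sigma}\}$ as $\sigma$ ranges over $\widehat{G}_{\textup{temp},i\mathfrak{h}^*_+}$. This is essentially a geometric fact about the family of coadjoint orbits passing through a fixed closed Weyl chamber in $i\mathfrak{h}^*$. I would establish it by parametrizing $\mathcal{O}_\sigma$ as $G/G^{d\gamma}$ and observing that, generically in the chamber, the stabilizer type is constant; the Liouville measure then scales in a controlled way with $d\gamma\in i\mathfrak{h}^*_+$, and the integrability of $\hat\varphi$ along the orbit is uniform. As an alternative, one can bypass Rossmann's formula and derive the required seminorm estimate by combining Harish-Chandra's theorem that each $\Theta_\sigma$ is a tempered distribution on $G$ with the (standard) continuity of $\sigma\mapsto \Theta_\sigma$ into the tempered dual, transferred to $\mathfrak{g}$ via $\exp$ on a neighborhood of the identity together with a cutoff controlled by $j_G^{1/2}$.
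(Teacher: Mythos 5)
Your overall architecture is the same as the paper's: reduce everything, via Rossmann's formula $\mathcal{F}[\mathcal{O}_{\sigma}]=\theta_{\sigma}$, to a single uniform estimate
$$\sup_{\sigma}\Bigl|\int_{\mathcal{O}_{\sigma}}\hat\varphi\,dm(\mathcal{O}_{\sigma})\Bigr|\leq C(\varphi),$$
with $C(\varphi)$ a continuous Schwartz seminorm, and then integrate against the finite measure $f\,d\mu_{\pi}$. The gap is in how you propose to get that estimate. Your claim that the orbits through a fixed closed chamber ``all share a common dimension and structural type'' is false on the walls of $i\mathfrak{h}^*_+$: singular parameters and limits of discrete series produce degenerate orbits (or finite unions of orbits obtained as limits $\mathcal{O}_{\sigma}=\lim_{\lambda\to\lambda_0}\mathcal{O}_{\lambda}$), and these need not be $\mu_{\pi}$-null. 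More seriously, the uniform polynomial volume growth $m(\mathcal{O}_{\sigma})(B_R(0))\leq CR^{d}$ with constants independent of $\sigma$ over the entire noncompact family is not a soft geometric observation one can wave at via ``the stabilizer type is generically constant'': the uniform convergence of orbital integrals of Schwartz functions over all regular orbits meeting a chamber, together with the correct behavior at singular parameters, is precisely Harish-Chandra's theorem on the invariant integral. That is the deep input; your sketch in effect assumes it.

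Your fallback (temperedness of each $\Theta_{\sigma}$ plus ``continuity of $\sigma\mapsto\Theta_{\sigma}$'') does not close the gap either: continuity of a family of tempered distributions over a noncompact parameter space gives no uniform seminorm bound. What you actually need, and what the paper uses, is Harish-Chandra's result that the invariant integral $\psi\colon\varphi\mapsto(\lambda\mapsto\langle\mathcal{O}_{\lambda},\varphi\rangle)$ extends to a continuous map $\mathcal{S}(i\mathfrak{g}^*)\to\mathcal{S}(i\mathfrak{h}^*_+)$, with the values at singular $\lambda_0$ obtained by Harish-Chandra's extension to the closed chamber; then $\langle\mathcal{O}_{\sigma},\hat\varphi\rangle=(\delta_{\lambda_{\sigma}}\circ\psi)(\hat\varphi)$ and the desired uniform bound is just $\sup_{\lambda}|\psi(\hat\varphi)(\lambda)|$, a continuous seminorm of $\varphi$. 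If you replace your volume-growth paragraph with a citation of that theorem (and handle the singular parameters as limits), your argument becomes the paper's proof.
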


In order to prove the Lemma, we require some remarks on the canonical measure on a coadjoint orbit. Let $G$ be a Lie group with Lie algebra $\mathfrak{g}$, and let $G\cdot \xi=\mathcal{O}_{\xi}\subset i\mathfrak{g}^*$ be the coadjoint orbit through $\xi\in i\mathfrak{g}^*$. Define a 2-form on $\mathcal{O}_{\xi}$ by
$$\omega_{\eta}(\operatorname{ad}_{\eta}^*X,\operatorname{ad}_{\eta}^*Y)=-\eta([X,Y])$$
for every $X,Y\in \mathfrak{g}$ and $\eta\in \mathcal{O}_{\xi}$. This 2-form makes $\mathcal{O}_{\xi}$ into a symplectic manifold (see for instance page 139 of \cite{CdS}), and the absolute value of the top dimensional form
$$\eta\mapsto \frac{\omega_{\eta}^{\wedge \dim\mathcal{O}_{\eta}/2}}{(\dim\mathcal{O}_{\xi}/2)!(2\pi)^{\dim\mathcal{O}_{\eta}/2}}$$
defines an invariant smooth density on $\mathcal{O}_{\xi}$, which is called the \emph{canonical measure} on $\mathcal{O}_{\xi}$. We will sometimes denote this density by $\eta\mapsto m(\mathcal{O}_{\xi})_{\eta}$ and we will sometimes abuse notation and simply denote by $\mathcal{O}_{\xi}$ both the orbit and the canonical invariant measure on the orbit.

\begin{proof} We will show that the above integral defines a tempered distribution on $\mathfrak{g}$ by showing that it is the Fourier transform of a tempered distribution on $i\mathfrak{g}^*$. For each $\sigma\in \widehat{G}_{\text{temp},i\mathfrak{h}^*_+}$, let $\mathcal{O}_{\sigma}$ denote the canonical invariant measure on the finite union of coadjoint orbits associated to $\sigma$ \cite{R1}, \cite{R2}. We will show that the integral
$$\int_{\sigma\in \widehat{G}_{\text{temp},i\mathfrak{h}^*_+}}\mathcal{O}_{\sigma}f \mu_{\pi}$$
defines a tempered distribution on $i\mathfrak{g}^*$ and its Fourier transform is the integral in the statement of the Lemma. Following Harish-Chandra we define a map
$$\psi\colon C_c^{\infty}(i\mathfrak{g}^*)\rightarrow C_c^{\infty}((i\mathfrak{h}_+^*)')$$
via $$\psi\colon \varphi\mapsto (\lambda\mapsto \langle \mathcal{O}_{\lambda},\varphi\rangle).$$
Here $\mathcal{O}_{\lambda}$ denotes the canonical invariant measure on the orbit $G\cdot \lambda$, which by an abuse of notation we will also denote by $\mathcal{O}_{\lambda}$. Further $(i\mathfrak{h}_+^*)'\subset i\mathfrak{h}^*$ is the set of regular elements in $i\mathfrak{h}_+^*$.
Harish-Chandra showed that this map, which he called the invariant integral, extends to a continuous map on spaces of Schwartz functions \cite{HC}. Moreover, he showed that functions in the image extend uniquely to all of $i\mathfrak{h}_+^*$ (see page 576 of \cite{HC2}). Thus, we obtain a continuous map
$$\psi\colon \mathcal{S}(i\mathfrak{g}^*)\rightarrow \mathcal{S}(i\mathfrak{h}_+^*).$$
Now, if the infinitesimal character of $\sigma$ is regular, then $\mathcal{O}_{\sigma}=\mathcal{O}_{\lambda}$ with $\lambda\in (i\mathfrak{h}_+^*)'$. Therefore,
$$\langle \mathcal{O}_{\sigma},\varphi\rangle= \delta_{\lambda}\circ \psi.$$
If the infinitesimal character of $\sigma$ is singular, then $\mathcal{O}_{\sigma}$ can be written as a limit 
$$\mathcal{O}_{\sigma}=\lim_{\lambda\in (i\mathfrak{h}^*_+)', \lambda\rightarrow \lambda_0}\mathcal{O}_{\lambda}$$
where $\lambda_0\in i\mathfrak{h}_+^*$ is singular \cite{R2}, \cite{R3}. Therefore,
$$\mathcal{O}_{\sigma}=\delta_{\lambda_0}\circ \psi$$
for some $\lambda_0\in i\mathfrak{h}_+^*$. Now, the map 
$$\varphi\mapsto \int_{\sigma\in \widehat{G}_{\text{temp},\mathfrak{h}^*_+}} f(\sigma) \langle \mathcal{O}_{\sigma},\varphi\rangle \mu_{\pi}$$
for $\varphi\in C_c^{\infty}(i\mathfrak{g}^*)$ is simply the map
$$\varphi\mapsto \int_{i\mathfrak{h}^*_+}\psi(\varphi)f \mu_{\pi}.$$
Here we have, by an abuse of notation, written $f$ for the pushforward of $f$ under the map
$$\widehat{G}_{\text{temp},i\mathfrak{h}^*}\rightarrow i\mathfrak{h}^*.$$
Note that $f\mu_{\pi}$ defines a tempered distribution on $i\mathfrak{h}^*_+$ since it is a positive, finite measure on $i\mathfrak{h}^*_+$. 
Since $\psi$ is a continuous map between Schwartz spaces and $f\mu_{\pi}$ is a tempered distribution on $i\mathfrak{h}_+^*$, we conclude that $$\int_{\widehat{G}_{\text{temp},i\mathfrak{h}^*_+}} \mathcal{O}_{\sigma}f\mu_{\pi}$$ is a tempered distribution on $i\mathfrak{g}^*$. Now, the Fourier transform of this tempered distribution is defined by
$$\omega\mapsto \langle \int_{\widehat{G}_{\text{temp},i\mathfrak{h}^*_+}} \mathcal{O}_{\sigma}f\mu_{\pi},\mathcal{F}[\omega]\rangle=\int_{\widehat{G}_{\text{temp},i\mathfrak{h}^*_+}} \langle \theta_{\sigma},\omega\rangle f \mu_{\pi}$$
for any smooth, compactly supported density $\omega$ on $i\mathfrak{g}^*$. Here we have used $\mathcal{F}[\mathcal{O}_{\sigma}]=\theta_{\sigma}$, which was proved by Rossmann \cite{R1}, \cite{R2}. Thus, the integral is the Fourier transform of a tempered distribution and is therefore tempered.
\end{proof}

Suppose $\Lambda=(T,\lambda,R_{i\mathbb{R}}^+)$ is a discrete Langlands parameter for $T\subset H$, the maximal compact subgroup of a Cartan subgroup $H$, suppose $\mathfrak{h}$ is the Lie algebra of $H$, and suppose $i\mathfrak{h}_+^*$ is the closed Weyl chamber defined by the choice of positive imaginary roots $R_{i\mathbb{R}}^+$. Then $$\widehat{G}_{\text{temp},i\mathfrak{h}^*_+,\Lambda}$$
is the collection of irreducible, tempered representations of the form $J(\Lambda,\nu)$ for some unitary character $\nu$ of $A$. Further, we have an embedding
$$\widehat{G}_{\text{temp},i\mathfrak{h}^*_+,\Lambda}\hookrightarrow i\mathfrak{a}^*$$
by 
$$J(\Lambda,\nu)\mapsto d\nu.$$
We will denote the image of the above map by $\mathfrak{a}^*_{\Lambda}$.

\begin{lemma} \label{wfcharacter} Suppose $\pi$ is a unitary representation of $G$ that is weakly contained in the regular representation, and suppose that $\pi$ decomposes as a direct integral of irreducible representations with respect to the finite, positive measure $\mu_{\pi}$ on $\widehat{G}_{\textup{temp},i\mathfrak{h}^*_+}$. Let $H$ be the Cartan subgroup with Lie algebra $\mathfrak{h}$, and let $R_{i\mathbb{R}}^+$ be the choice of positive imaginary roots that determines the Weyl chamber $i\mathfrak{h}^*_+$. Let $H=TA$ be the decomposition of $H$ into compact and split pieces, and let $\mathfrak{t}$ denote the Lie algebra of $T$. Assume that there exists a polynomial $p$ on $i\mathfrak{t}^*$ such that for every discrete Langlands parameter $\Lambda=(T,\lambda,R_{i\mathbb{R}}^+)$, we have
$$\int_{\nu\in i\mathfrak{a}^*} d\mu_{\pi}|_{\widehat{G}_{\textup{temp},i\mathfrak{h}^*_+,\Lambda}}\leq |p(d\lambda)|.$$
Then 
$$\operatorname{WF}(\pi)\supset \operatorname{WF}_e\left(\int_{\sigma\in \widehat{G}_{\textup{temp},i\mathfrak{h}^*_+}}\Theta_{\sigma}\mu_{\pi}\right).$$
From this, we immediately deduce
$$\operatorname{WF}(\pi)\supset \operatorname{WF}_0\left(\int_{\sigma\in \widehat{G}_{\textup{temp},i\mathfrak{h}^*_+}}\theta_{\sigma}\mu_{\pi}\right).$$
\end{lemma}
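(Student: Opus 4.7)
The plan is to prove the first inclusion $\operatorname{WF}(\pi) \supset \operatorname{WF}_e(\Theta_\pi)$ (writing $\Theta_\pi := \int \Theta_\sigma \, d\mu_\pi$) by contrapositive, using Proposition \ref{howeusequiv} to replace $\operatorname{WF}(\pi)$ by Howe's trace-based variant $\widetilde{\operatorname{WF}(\pi)}$ and then applying Lemma \ref{howelemmawf}. The second inclusion, for $\theta_\pi$ on $\mathfrak{g}$, follows at once: the exponential map is an analytic diffeomorphism near $e$, so pullback under $\exp$ matches $\operatorname{WF}_e$ with $\operatorname{WF}_0$, while $j_G^{1/2}$ is analytic and nonvanishing at $0$ and hence does not affect wave front sets there.

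First I would show that under the polynomial hypothesis on $\mu_\pi$ the operator $\pi(\omega)$ is of trace class for every $\omega \in C_c^\infty(G)$ with $|\pi(\omega)|_1$ dominated by a continuous seminorm of $\omega$. Factoring $\omega = \omega_1 * \omega_2^*$ inside $C_c^\infty(G)$ yields $\pi(\omega) = \pi(\omega_1)\pi(\omega_2)^*$, so it suffices to check that each $\pi(\omega_i)$ is Hilbert-Schmidt. In the direct integral,
\[
\|\pi(\omega_i)\|_{\mathrm{HS}}^2 = \int \Theta_\sigma(\omega_i * \omega_i^*)\, d\mu_\pi(\sigma),
\]
and Harish-Chandra's polynomial bound on tempered characters, combined with the hypothesized bound $\int d\mu_\pi|_{\widehat{G}_{\text{temp},i\mathfrak{h}^*_+,\Lambda}} \leq |p(d\lambda)|$ on the fiber masses, makes the integral converge. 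It follows that $\operatorname{Tr}(\pi(\omega)) = \int \Theta_\sigma(\omega)\, d\mu_\pi = \Theta_\pi(\omega)$, identifying $\Theta_\pi$ with the trace functional $\omega \mapsto \operatorname{Tr}(\pi(\omega))$.

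For the contrapositive, suppose $\xi_0 \notin \widetilde{\operatorname{WF}(\pi)}$. By Lemma \ref{howelemmawf}(d), there exist neighborhoods $e \in U \subset G$ and $\xi_0 \in \Omega \subset i\mathfrak{g}^*$ such that for every $\varphi \in C_c^\infty(U)$ the operator norm $|\pi(\varphi(g) e^{t\eta(\log g)})|_\infty$ decays rapidly in $t > 0$, uniformly for $\eta \in \Omega$. Using the identification from the previous paragraph,
\[
\mathcal{F}[\varphi \Theta_\pi](t\eta) = \Theta_\pi(\varphi \cdot e^{t\eta \log}) = \operatorname{Tr}\bigl(\pi(\varphi e^{t\eta \log})\bigr).
\]
I would then factor $\varphi = \chi_1 * \chi_2$ with $\chi_i \in C_c^\infty(U)$ and write $\pi(\varphi e^{t\eta \log}) = \pi(\chi_1) \pi(\chi_2 e^{t\eta \log}) + R_{t,\eta}$, where $R_{t,\eta}$ is a Baker-Campbell-Hausdorff remainder. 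Applying the H\"older-type bound $|\operatorname{Tr}(AB)| \leq |A|_1 |B|_\infty$ to the main term, together with $|\pi(\chi_1)|_1$ finite by the first step and $|\pi(\chi_2 e^{t\eta \log})|_\infty$ rapidly decaying by Lemma \ref{howelemmawf}(d), shows that the main term decays rapidly. Iterating the factorization to higher order controls the remainder, giving $\mathcal{F}[\varphi \Theta_\pi](t\eta) = O(t^{-N})$ for every $N$, so $\xi_0 \notin \operatorname{WF}_e(\Theta_\pi)$.

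The principal obstacle is the Baker-Campbell-Hausdorff remainder $R_{t,\eta}$: on a non-abelian Lie group the multiplicative oscillation $e^{t\eta(\log g)}$ is not exactly compatible with the convolution structure, and each commutator correction threatens to introduce extra factors of $t$ from differentiating the oscillation against quadratic BCH terms. Two natural strategies resolve this. One is to transfer everything to exponential coordinates on $\mathfrak{g}$ via Lemma \ref{tempereddist}, where the oscillation $e^{t\eta \cdot X}$ separates multiplicatively from any fixed cutoff and the analysis reduces to abelian harmonic analysis. The other is to apply Lemma \ref{howelemmawf}(c) rather than (d), taking $T = \pi(\omega_0)$ for a suitable $\omega_0 \in C_c^\infty$ and exploiting the crucial feature of (c) that the constant $C(\varphi)$ in the bound $|I(\varphi,\eta,T)(t)| \leq C(\varphi) |T|_1 t^{-N}$ is independent of $T$, so the uniform trace-class control from the first step can be pushed through the limiting procedure.
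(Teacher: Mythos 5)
Your first step --- that $\pi(\omega)$ is trace class for every $\omega\in C_c^{\infty}(G)$ with $\operatorname{Tr}(\pi(\omega))=\int\Theta_{\sigma}(\omega)\,d\mu_{\pi}$ --- is where the argument breaks. The hypothesis of the lemma constrains only the measure $\mu_{\pi}$, not the multiplicity function $m(\pi,\sigma)$, which may be unbounded or even infinite. Already for $\pi=\sigma^{\oplus\infty}$ with $\sigma$ a single discrete series (so that $\mu_{\pi}$ is a point mass and the polynomial hypothesis holds trivially), $\pi(\omega)$ is never trace class unless $\sigma(\omega)=0$; and even when $\pi(\omega)$ happens to be trace class, one has $\operatorname{Tr}(\pi(\omega))=\int m(\pi,\sigma)\,\Theta_{\sigma}(\omega)\,d\mu_{\pi}$, which is not the multiplicity-free distribution $\int\Theta_{\sigma}\,\mu_{\pi}$ appearing in the conclusion. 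Your Hilbert--Schmidt identity $\|\pi(\omega_i)\|_{HS}^2=\int\Theta_{\sigma}(\omega_i*\omega_i^*)\,d\mu_{\pi}$ silently assumes multiplicity one. This is not a repairable detail within your framework: the lemma is precisely about extracting the multiplicity-free character integral from $\pi$, and no operator of the form $\pi(\omega)$ does that.

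The paper's proof handles both this issue and your self-identified ``principal obstacle'' at once. Inside the multiplicity-one subrepresentation $\sum_{\Lambda}\int J(\Lambda,\nu)\,d\mu_{\pi}$ it chooses an orthonormal family $\{e^{\Lambda}_{\tau,i}(\nu)\}$ adapted to $K$-types (using the compact picture to make the basis independent of $\nu$), lets $P$ be the projection onto their span, so that $\operatorname{Tr}(\pi(g)P)=\int\Theta_{\sigma}\,\mu_{\pi}$ exactly, and then damps $P$ to the genuinely trace-class operator $T=(I+\Omega_K)^{-N}P$; the $K$-type counting estimate that makes $T$ trace class is exactly where the polynomial hypothesis on the fiber masses enters. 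By Proposition \ref{howeusequiv} one then has $\operatorname{WF}_e(\operatorname{Tr}(\pi(g)T))\subset\operatorname{WF}(\pi)$ directly from the definition of $\widetilde{\operatorname{WF}(\pi)}$ --- no contrapositive, no factorization $\varphi=\chi_1*\chi_2$, and no Baker--Campbell--Hausdorff remainder --- and the original distribution is recovered as $L_{(I+\Omega_K)^{N}}\operatorname{Tr}(\pi(g)T)$, so one concludes because differential operators can only shrink wave front sets. I would add that your fallback of applying Lemma \ref{howelemmawf}(c) with $T=\pi(\omega_0)$ and letting $\omega_0\to\delta_e$ does not obviously close either, since the bound there is $C(\varphi)\,|T|_1\,t^{-N}$ and $|\pi(\omega_0)|_1$ blows up in that limit.
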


\begin{proof} First, we note that our hypothesis and Lemma \ref{tempereddist} together with the relation $\exp^*\Theta_{\sigma}=\theta_{\sigma}j_G^{-1/2}$ imply that the above integral defines a distribution in a neighborhood of the identity $e\in G$. Therefore, the right hand side is at least well defined. 

Now, let us break up the integral
$$\int_{\sigma\in \widehat{G}_{\text{temp},i\mathfrak{h}^*_+}}\Theta_{\sigma}\mu_{\pi}=\sum_{\Lambda} \int_{d\nu\in i\mathfrak{a}_{\Lambda}^*}\Theta_{J(\Lambda,\nu)}\mu_{\pi}|_{\widehat{G}_{\text{temp},i\mathfrak{h}^*_+,\Lambda}}$$
where the sum is over discrete Langlands parameters $\Lambda=(T,\lambda,R_{i\mathbb{R}}^+)$ for which $T\subset H$ is a maximal compact subgroup and the choice of positive roots $R_{i\mathbb{R}}^+$ determines the Weyl chamber $\mathfrak{h}^*_+$.
%Here the sum over $S$ is over subsets of positive roots as in the proof of Lemma \ref{radonmeasure}. 
If $V(\Lambda,\nu)$ denotes the Hilbert space on which $J(\Lambda,\nu)$ acts, then utilizing the compact picture for induced representations (see page 169 of \cite{Kn}), for fixed discrete Langlands parameter $\Lambda$, we may identify the spaces $V(\Lambda,\nu)$ varying over all unitary characters $\nu$ of $A$ as unitary representations of $K$. 
Thus, for a fixed discrete Langlands parameter $\Lambda$, we may fix an orthonormal basis for $V(\Lambda,\nu)$ that is independent of $d\nu\in i\mathfrak{a}_{\Lambda}^*$, which we will call $\{e_{\tau,i}^{\Lambda}(\nu)\}$. We choose this basis in such a way that each vector $e^{\Lambda}_{\tau,i}(\nu)$ is contained in the isotypic component of $\tau\in \widehat{K}$. Now, since 
$$\pi\simeq \sum_{\Lambda} \int_{d\nu\in i\mathfrak{a}_{\Lambda}^*}J(\Lambda,\nu)^{\oplus m(\pi,J(\Lambda,\nu))}d\mu_{\pi}|_{\widehat{G}_{\text{temp},i\mathfrak{h}^*_+,\Lambda}},$$
the representation 
$$\sum_{\Lambda}\int_{d\nu\in i\mathfrak{a}^*_{\Lambda}}J(\Lambda,\nu) d\mu_{\pi}|_{\widehat{G}_{\text{temp},i\mathfrak{h}^*_+,\Lambda}}$$
is a subrepresentation of our representation $\pi$. Now, the map $\nu\mapsto e_{\tau,i}^{\Lambda}(\nu)$ is contained in the above direct integral representation since the measure $\mu_{\pi}|_{\widehat{G}_{\text{temp},i\mathfrak{h}^*_+,\delta}}$ is finite. Thus, for fixed $i$, we may view $e_{\tau,i}^{\Lambda}(\nu)$ as a vector in our representation $\pi$. Now, we observe that the weighted sum of matrix coefficients
$$ \sum_{\Lambda}\sum_{i,\tau}\int_{\nu\in i\mathfrak{a}_{\Lambda}^*}(J(\Lambda,\nu)(g)e_{\tau,i}^{\Lambda}(\nu),e_{\tau,i}^{\Lambda}(\nu))\mu_{\pi}|_{\widehat{G}_{\text{temp},i\mathfrak{h}^*_+,\Lambda}}$$
is simply our integral $$\int_{\sigma\in \widehat{G}_{\text{temp},i\mathfrak{h}^*_+}}\Theta_{\sigma}\mu_{\pi}.$$

Let $V$ denote the Hilbert space on which $\pi$ acts, and let $P$ be the orthogonal projection of $V$ onto the subspace generated by the vectors $\{e_{\tau,i}^{\Lambda}\}$. Define $T_N=(I+\Omega_K)^{-N}P$ where $\Omega_K$ is the Casimir operator for $K$. 

First, observe $$\operatorname{Tr}(\pi(g)P)=\int_{\sigma\in \widehat{G}_{\text{temp},i\mathfrak{h}^*_+}}\Theta_{\sigma}\mu_{\pi}$$
as a distribution. Next, we claim that $T_N$ is a trace class operator for sufficiently large $N$.

Observe
$$\left|(I+\Omega_K)^{-N}P\right|_1=\sum_{\Lambda}\sum_{i,\tau}\frac{1}{(1+|\tau|^2)^N}\left|\int_{d\nu\in i\mathfrak{a}_{\Lambda}^*}d\mu_{\pi}|_{\widehat{G}_{\text{temp},i\mathfrak{h}^*_+,\Lambda}}\right|$$
$$\leq \sum_{\Lambda=(T,\lambda,R_{i\mathbb{R}}^+)}\sum_{i,\tau}\frac{1}{(1+|\tau|^2)^N}|p(d\lambda)|$$

\noindent where $|\cdot|_1$ denotes the norm on the Banach space of trace class operators. We recall that the multiplicity of $\tau$ in any irreducible $J(\Lambda,\nu)$ is at most $(\dim\tau)^2$ (see page 205 of \cite{Kn} for an exposition or \cite{HC3} for the original reference). Now, fix an inner product on the vector space $i\mathfrak{t}^*$, and let $|\cdot|$ be the associated norm. By Weyl's dimension formula, we have $(\dim\tau)^2\leq C(1+|\tau|^2)^r$ where $r$ is the number of positive roots of $K$ with respect to a maximal torus and $C$ is a positive constant. Moreover, a limit of discrete series $J(\Lambda,\nu)$ can only contain $\tau$ as a $K$ type if $|d\lambda|\leq |\tau|+C_1$ where $C_1>0$ is a constant independent of $\tau$ (see page 460 of \cite{Kn} for an exposition and \cite{HS} for the original reference). Counting lattice points, this means that the number of such $\delta$ is bounded by $C_2(1+|\tau|^2)^k$ where $k$ is the rank of $G$ and $C_2>0$ is a positive constant. The relationship between $|d\lambda|$ and $|\tau|$ also implies that we may bound $|p(d\lambda)|\leq C_3(1+|\tau|^2)^M$ for some positive integer $M$ and some constant $C_3>0$ whenever $\tau$ is a $K$ type of $J(\Lambda,\nu)$. Combining these facts, the above expression becomes
$$\leq CC_2C_3\sum_{\tau}\frac{(1+|\tau|^2)^{r+k+M}}{(1+|\tau|^2)^N}.$$ If $N$ is sufficiently large, this sum will converge and therefore $(I+\Omega_K)^{-N}P$ is a trace class operator on $V$. Now, using Howe's original definition of the wave front set involving trace class operators (see Proposition \ref{howeusequiv}), we observe
$$WF(\pi)\supset \operatorname{WF}_e\left(\operatorname{Tr}(\pi(g)(I+\Omega_K)^{-N}P)\right).$$

To finish the argument, we first recall 
$$\langle \int_{\sigma\in \widehat{G}_{\text{temp},i\mathfrak{h}^*_+}}\Theta_{\sigma}\mu_{\pi},\omega\rangle=\operatorname{Tr}(\pi(\omega)P)$$
for any smooth, compactly supported density $\omega$ on $\mathfrak{g}$. Then we observe
$$\operatorname{Tr}(\pi(\omega)P)=\operatorname{Tr}(\pi(\omega)(I+\Omega_K)^{N}(I+\Omega_K)^{-N}P)$$
$$=\operatorname{Tr}(\pi(L_{(I+\Omega_K)^N}\omega)(I+\Omega_K)^{-N}P)=L_{(I+\Omega_K)^N}\operatorname{Tr}(\pi(\omega)(I+\Omega_K)^{-N}P).$$

Since differential operators can only decrease the wave front set, we obtain
$$WF(\pi)\supset \operatorname{WF}_e\left(\int_{\sigma\in \widehat{G}_{\text{temp},i\mathfrak{h}^*_+}}\Theta_{\sigma}\mu_{\pi}\right)$$
and the Lemma has been verified.
\end{proof} 

Next, we need a Lemma involving the canonical measure on regular, coadjoint orbits. We fix an arbitrary inner product $(\cdot,\cdot)$ on $i\mathfrak{g}^*$, and we denote by $|\cdot|$ the corresponding norm. If $M\subset i\mathfrak{g}^*$ is any submanifold we denote by $\operatorname{Eucl}(M)$ the following density on $M$. If $\xi\in M$ and $\dim T_{\xi}M=k$, we fix an orthonormal basis $e_1,\ldots,e_k$ of $T_{\xi}M$, and for every $v_1,\ldots,v_k\in T_{\xi}M$, we define
$$\operatorname{Eucl}(M)_{\xi}(v_1,\ldots,v_k)=\left|\det((v_i,e_j)_{i,j})\right|.$$
One notes that this definition is independent of the orthonormal basis $\{e_j\}$.

\begin{lemma} \label{canonicalmeasure} Let $G$ be a Lie group, and let $i\mathfrak{g}^*$ be $i$ times the dual of the Lie algebra of $G$. If $\xi\in i\mathfrak{g}^*$, let $m(\mathcal{O}_{\xi})$ denote the canonical measure on the $G$ orbit through $\xi$ and let $\operatorname{Eucl}(\mathcal{O}_{\xi})$ denote the measure on the $G$ orbit through $\xi$ that is induced from a fixed inner product on $i\mathfrak{g}^*$. For every $\xi\in \mathcal{O}_{\xi}$, we have
$$F(\xi)m(\mathcal{O}_{\xi})_{\xi}=\operatorname{Eucl}(\mathcal{O}_{\xi})_{\xi}$$
for some function $F$ on $i\mathfrak{g}^*$. Then there exists a positive constant $C>0$ (depending on $G$) such that
$$|F(\xi)|\leq C(1+|\xi|)^{\dim G/2}$$
for all $\xi\in i\mathfrak{g}^*$. 
\end{lemma}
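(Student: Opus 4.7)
The plan is to express $F(\xi)$ explicitly as an elementary symmetric function of a matrix linear in $\xi$, and then invoke polynomial growth. Fix a basis $e_1,\ldots,e_m$ of $\mathfrak{g}$ chosen so that the dual basis $f_j=ie_j^*$ of $i\mathfrak{g}^*$ is orthonormal with respect to the fixed inner product. Associate to each $\xi\in i\mathfrak{g}^*$ the $m\times m$ real skew-symmetric matrix $M(\xi)$ with entries $M(\xi)_{ji}=-\xi([e_i,e_j])$. These entries are linear in the coordinates of $\xi$, and $M(\xi)$ is the matrix, in the chosen bases, of the coadjoint differential $X\mapsto\operatorname{ad}^*_\xi X$, so its rank equals $\dim\mathcal{O}_\xi=2n$.

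First I would pick an index set $I\subset\{1,\ldots,m\}$ with $|I|=2n$ for which $\{\operatorname{ad}^*_\xi e_i\}_{i\in I}$ is a basis of $T_\xi\mathcal{O}_\xi$, then compute both densities on this basis. The equivariance formula $\omega_\xi(\operatorname{ad}^*_\xi X,\operatorname{ad}^*_\xi Y)=-\xi([X,Y])$ combined with the definition of the canonical measure from $\omega^{\wedge n}/(n!(2\pi)^n)$ gives
\[
m(\mathcal{O}_\xi)_\xi\bigl(\{\operatorname{ad}^*_\xi e_i\}_{i\in I}\bigr)=\frac{|\operatorname{Pf} M(\xi)_{I,I}|}{(2\pi)^n}.
\]
For the Euclidean density, the Gram matrix of $\{\operatorname{ad}^*_\xi e_i\}_{i\in I}$ in $(i\mathfrak{g}^*,(\cdot,\cdot))$ equals $M(\xi)_{*,I}^T M(\xi)_{*,I}$, and the Cauchy--Binet formula yields
\[
\operatorname{Eucl}(\mathcal{O}_\xi)_\xi\bigl(\{\operatorname{ad}^*_\xi e_i\}_{i\in I}\bigr)^2=\sum_{|J|=2n}\bigl(\det M(\xi)_{J,I}\bigr)^2.
\]

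Using the identity $(\operatorname{Pf} A)^2=\det A$ for even-dimensional skew $A$, the defining relation $F(\xi)\,m=\operatorname{Eucl}$ translates into $F(\xi)^2\det M(\xi)_{I,I}=(2\pi)^{2n}\sum_J(\det M(\xi)_{J,I})^2$ for every admissible $I$. Summing this over all size-$2n$ subsets $I$ gives
\[
F(\xi)^2\cdot e_{2n}(M(\xi))=(2\pi)^{2n}\,\|\Lambda^{2n}M(\xi)\|_F^2,
\]
where $e_{2n}(M(\xi))=\sum_{|I|=2n}\det M(\xi)_{I,I}$ and $\|\cdot\|_F$ denotes the Frobenius norm of the $(2n)$-th compound matrix. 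Because $M(\xi)$ is real skew-symmetric of rank exactly $2n$, its nonzero singular values come in equal pairs $\sigma_1,\sigma_1,\ldots,\sigma_n,\sigma_n$; a direct eigenvalue count then yields both $e_{2n}(M(\xi))=\prod_k\sigma_k^2$ and $\|\Lambda^{2n}M(\xi)\|_F^2=\bigl(\prod_k\sigma_k^2\bigr)^2$, so the right-hand side above is $(2\pi)^{2n}\,e_{2n}(M(\xi))^2$. Cancelling one factor of $e_{2n}(M(\xi))$ produces the closed form
\[
F(\xi)^2=(2\pi)^{2n}\,e_{2n}(M(\xi)).
\]

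Finally, since the entries of $M(\xi)$ are linear in $\xi$ with coefficients controlled by the structure constants of $\mathfrak{g}$, each $2n\times 2n$ principal minor is a polynomial of degree $2n$ in $\xi$ with bounded coefficients, giving $|e_{2n}(M(\xi))|\leq C_n(1+|\xi|)^{2n}$. Because $2n\leq\dim G$ and only finitely many orbit dimensions occur, taking $C=\max_n\sqrt{C_n}(2\pi)^n$ produces $|F(\xi)|\leq C(1+|\xi|)^{\dim G/2}$ uniformly. The step requiring the most care is the compound-matrix identity $\|\Lambda^{2n}M\|_F^2=e_{2n}(M)^2$ for skew $M$ of rank exactly $2n$; once that rank-analysis is in place, the remaining steps are standard multilinear algebra and elementary bounds.
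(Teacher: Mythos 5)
Your argument is correct. I verified the Pfaffian formula for the canonical density, the Cauchy--Binet expression for the Euclidean density, the singular-value computation showing $\sum_{I,J}(\det M_{J,I})^2=e_{2n}(M)^2$ for a real skew-symmetric $M$ of rank exactly $2n$, and the final degree count; the one point worth making explicit is that the identity $F(\xi)^2\det M(\xi)_{I,I}=(2\pi)^{2n}\sum_J(\det M(\xi)_{J,I})^2$ also holds (trivially, as $0=0$) for index sets $I$ whose columns are dependent, which is what licenses summing over \emph{all} $I$ of size $2n$. Your route differs from the paper's in an instructive way. The paper sidesteps Pfaffians, Cauchy--Binet, and compound matrices entirely by choosing a $\xi$-adapted basis of the tangent space: it takes an orthonormal basis $\eta_1,\dots,\eta_k$ of $T_\xi\mathcal{O}_\xi$ and the elements $X_i\in\mathfrak{g}$ dual to the $\eta_i$ under the inner product, so that $\xi([X_i,X_j])=\operatorname{ad}^*_{X_i}\xi(X_j)=(\operatorname{ad}^*_{X_i}\xi,\eta_j)$; with that choice the matrix of the symplectic form and the matrix computing the Euclidean volume literally coincide, giving $F(\xi)$ as a constant times $\left|\det\left((\operatorname{ad}^*_{X_i}\xi,\eta_j)\right)\right|^{1/2}$ in one stroke, and the bound then follows from $|\operatorname{ad}^*_{X_i}\xi|\leq C_1|X_i|\,|\xi|$ with $|X_i|=1$. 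What your extra multilinear algebra buys is an exact, basis-independent closed form $F(\xi)^2=(2\pi)^{\dim\mathcal{O}_\xi}\,e_{\dim\mathcal{O}_\xi}(M(\xi))$, exhibiting $F^2$ on each stratum of fixed orbit dimension as the restriction of an explicit polynomial in $\xi$ of degree $\dim\mathcal{O}_\xi$; this is sharper than the inequality the Lemma asserts, whereas the paper's adapted basis reaches the required estimate faster.
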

 
\begin{proof} In order to simplify our notation, we prove the Lemma for coadjoint orbits in $\mathcal{O}_{\xi}$ contained in $\mathfrak{g}^*$ instead of $i\mathfrak{g}^*$. Multiplying by $i$ everywhere, one will obtain the above Lemma. Observe that we must define the 2 form $\omega_{\xi}$ on the coadjoint orbit $G\cdot \xi=\mathcal{O}_{\xi}\subset \mathfrak{g}^*$ (instead of $i\mathfrak{g}^*$) by 
$$\omega_{\xi}(\operatorname{ad}_{\xi}^*X,\operatorname{ad}_{\xi}^*Y)=\xi([X,Y])$$
(dividing by $i$ twice removes the negative sign).

Now, fix $\xi\in \mathfrak{g}^*$, and choose a basis $\{\eta_1,\ldots,\eta_k\}$ of $T_{\xi}\mathcal{O}_{\xi}$ that is orthonormal with respect to the restriction of the inner product on $\mathfrak{g}^*$ to $T_{\xi}\mathcal{O}_{\xi}$. For $i=1,\ldots,k$, define $X_i\in \mathfrak{g}$ by $\eta(X_i)=(\eta,\eta_i)$ for all $\eta\in \mathfrak{g}^*$. Note that we also have $(X_i,W)=\eta_i(W)$ for all $W\in \mathfrak{g}$ (where the inner product on $\mathfrak{g}$ is the one induced from our fixed inner product on $\mathfrak{g}^*$). We claim that $\operatorname{ad}_{X_1}^*\xi,\ldots,\operatorname{ad}_{X_{k}}^*\xi$ is a basis of $T_{\xi}\mathcal{O}_{\xi}$. To show this, we need only show that $\{X_i\}$ is a linearly independent set in $\mathfrak{g}/Z_{\mathfrak{g}}(\xi)$. Write $\eta_i=\operatorname{ad}_{Y_i}^*\xi$. If $W\in Z_{\mathfrak{g}}(\xi)$, then $$(X_i,W)=\eta_i(W)=\operatorname{ad}_{Y_i}^*\xi(W)=-\operatorname{ad}_W^*\xi(Y_i)=0.$$
Since each $X_i$ is orthogonal to $Z_{\mathfrak{g}}(\xi)$, the set $\{X_i\}$ must remain linearly independent in $\mathfrak{g}/Z_{\mathfrak{g}}(\xi)$.

Next, we compute
$$\operatorname{Eucl}(\mathcal{O}_{\xi})_{\xi}(\operatorname{ad}_{X_1}^*\xi,\ldots,\operatorname{ad}_{X_k}^*\xi)=\left|\det((\operatorname{ad}_{X_i}^*\xi,\eta_j))\right|$$
and $$m(\mathcal{O}_{\xi})_{\xi}(\operatorname{ad}_{X_1}^*\xi,\ldots,\operatorname{ad}_{X_k}^*\xi)=c\left|\det(\xi([X_i,X_j]))\right|^{1/2}$$
$$=c\left|\det(\operatorname{ad}_{X_i}^*\xi(X_j))\right|^{1/2}=c\left|\det((\operatorname{ad}_{X_i}^*\xi,\eta_j))\right|^{1/2}$$
where $$c=\frac{1}{(2\pi)^{\dim\mathcal{O}_{\xi}/2}}.$$
Thus, we obtain
$$F(\xi)=\frac{1}{c}\left|\det((\operatorname{ad}_{X_i}^*\xi,\eta_j))\right|^{1/2}.$$ 
Now, we note that $$\mathfrak{g}\otimes \mathfrak{g}^*\rightarrow \mathfrak{g}^*,\ \text{by}\ (X,\xi)\mapsto \operatorname{ad}^*_X\xi$$
is a linear map between finite-dimensional vector spaces. In particular, it is a bounded, linear map, and there exists a constant $C_1$ (depending on $G$) such that 
$$|\operatorname{ad}_X^*\xi|\leq C_1|X||\xi|\ \text{for\ all}\ X\in \mathfrak{g},\ \xi\in \mathfrak{g}^*.$$
Therefore, we estimate,
$$\left|\det((\operatorname{ad}_{X_i}^*\xi,\eta_j))\right|\leq (\dim G)^2 \prod_{i=1}^k C_1 |X_i||\xi|=(\dim G)^2C_1^k|\xi|^{k/2}.$$
And for $c_k=(1/c)(\dim G)C_1^{k/2}$, we obtain
$$|F(\xi)|\leq c_k|\xi|^k$$
whenever $\dim\mathcal{O}_{\xi}=k$. Since the dimension of every coadjoint orbit is less than or equal to the dimension of $G$, we obtain
$$|F(\xi)|\leq C(1+|\xi|)^{\dim G/2}$$
where $C$ is the maximum of the constants $c_k$. The Lemma follows.
\end{proof}

Next, we prove Proposition \ref{regularwf}.

\begin{proof} Suppose $\xi\in \operatorname{AC}(\mathcal{O}\operatorname{-}\operatorname{supp}\pi)$. We must show $\xi\in \operatorname{WF}(\pi)$. As in the last Lemma, we fix an inner product $(\cdot,\cdot)$ on $i\mathfrak{g}^*$, and we let $|\cdot|$ denote the corresponding norm. Without loss of generality, we may assume $|\xi|=1$. By Lemma \ref{wfcharacter}, to show $\xi\in \operatorname{WF}(\pi)$, it is enough to show 
$$\xi\in \operatorname{WF}_0\left(\int_{\sigma\in \widehat{G}_{\text{temp},i\mathfrak{h}^*_+}}\theta_{\sigma}d\mu_{\pi}'\right)$$
for some finite positive measure $\mu_{\pi}'$ that is equivalent to $\mu_{\pi}$. Now, to check this fact, we fix an even Schwartz function $\mathcal{F}[\varphi]\in \mathcal{S}(i\mathfrak{g}^*)$ such that $\mathcal{F}[\varphi](x)\geq 0$ for all $x$ and $\mathcal{F}[\varphi](x)=1$ if $|x|\leq 1$. Then $\mathcal{F}[\varphi]$ is the Fourier transform of an even Schwartz function $\varphi\in \mathcal{S}(\mathfrak{g})$.

By Theorem 3.22 on page 155 of \cite{Fo}, if $\xi$ is not in the wave front set of $$\int_{\sigma\in \widehat{G}_{\text{temp},i\mathfrak{h}^*_+}}\theta_{\sigma}d\mu_{\pi}'$$ at 0, then there must exist an open cone $\xi\in \mathcal{C}$ such that for $\eta\in \mathcal{C}$ with $||\xi|-|\eta||<\epsilon$, there exist constants $C_{N,\epsilon}$ for every $0<\epsilon<1$ and $N\in \mathbb{N}$ such that
$$\left|\left(\mathcal{F}\left[\int_{\sigma\in \widehat{G}_{\text{temp},i\mathfrak{h}^*_+}}\theta_{\sigma}d\mu_{\pi}'\right]*t^{-n/4}\mathcal{F}[\varphi](t^{-1/2}\cdot)\right)(t\eta)\right|\leq C_{N,\epsilon}t^{-N}.$$
Here $\mathcal{F}$ denotes the Fourier transform and $n=\dim G$. Taking this Fourier transform, the left hand side becomes
$$\left(\int_{\sigma\in \widehat{G}_{\text{temp},i\mathfrak{h}^*_+}}\mathcal{O}_{\sigma} d\mu_{\pi}' * t^{-n/4}\mathcal{F}[\varphi](t^{-1/2}\cdot)\right) (t\eta)$$
$$=\int_{\sigma\in \widehat{G}_{\text{temp},i\mathfrak{h}^*_+}}\left(\int_{\mathcal{O}_{\sigma}}t^{-n/4}\mathcal{F}[\varphi]\left(\frac{t\eta-\zeta}{\sqrt{t}}\right)d(\mathcal{O}_{\sigma})_{\zeta}\right)d\mu_{\pi}'.$$
Thus, to prove a contradiction and conclude that $\xi$ is indeed in the wave front set, we must find a suitable measure $\mu_{\pi}'$, a constant $C$, and an integer $M$ such that 
$$\left|\int_{\sigma\in \widehat{G}_{\text{temp},i\mathfrak{h}^*_+}}\left(\int_{\mathcal{O}_{\sigma}}t_m^{-n/4}\mathcal{F}[\varphi]\left(\frac{t_m\eta_m-\zeta}{\sqrt{t_m}}\right)d(\mathcal{O}_{\sigma})_{\zeta}\right)d\mu_{\pi}'\right|\geq Ct_m^{-M}$$
for a sequence $(t_m,\eta_m)$ with $\eta_m\in \mathcal{C}$, $||\xi|-|\eta_m||<\epsilon$, and $t_m\rightarrow \infty$.

To do this, we first take our open cone $\mathcal{C}$, and we note that there exists $\delta<\epsilon$ such that $\mathcal{C}\supset \mathcal{C}_{\delta}$ where
$$\mathcal{C}_{\delta}=\{\eta\in i\mathfrak{g}^*|\ |\xi-t\eta|<\delta\ \text{some}\ t>0\}.$$
Since $\xi\in \operatorname{AC}(\mathcal{O}\operatorname{-}\operatorname{supp}\pi)$, we know that $(\mathcal{O}\operatorname{-}\operatorname{supp}\pi)\cap \mathcal{C}_{\delta}$ is noncompact. Therefore, we may find a sequence $\{t_m\eta_m\}$ inside this intersection such that $t_m>t_{m-1}+2$ and $|\eta_m|=1$ for every $m$.

Let $\mathcal{O}_{t_m\eta_m}=\mathcal{O}_{\sigma_m}$ and for $\sigma_m'$ near $\sigma_m$, consider the set 
$$S_{m,\sigma_m'}=\{\zeta\in \mathcal{O}_{\sigma_m'}\cap \mathcal{C}_{\delta}|\ ||\zeta|-|t_m\eta_m||<1\}.$$
Let $$F_m(\sigma_m')=\langle \operatorname{Eucl}(\mathcal{O}_{\sigma_m'}),S_{m,\sigma_m'}\rangle$$
be the volume of this set with respect to the Euclidean measure induced on the corresponding orbit. Since $t_m\eta_m\in \mathcal{C}_{\delta/2}$ and $t_m\delta/2\geq 1$ for sufficiently large $m$, we deduce that $F_m(\sigma_m)\geq t_m^{-k_1}$ for sufficiently large $m$ and some $k_1>0$. Since $F_m(\sigma_m')$ is a continuous function of $\sigma_m'$, we can find a neighborhood $N_m$ of $\sigma_m$ in $\widehat{G}_{\text{temp},i\mathfrak{h}^*_+}$ for each $m$ such that $F_m(\sigma_m')\geq (1/2)t_m^{-k_1}$ for every $\sigma_m'\in N_m$. In addition, observe that the sets
$$\bigcup_{\sigma'\in N_m} S_{m,\sigma_m'}$$
are disjoint.

Now, since $\sigma_m$ is in the support of $\mu_{\pi}$ and $N_m$ is an open neighborhood containing $\sigma_m$, we must have $$\mu_{\pi}(N_m)>0.$$ We may choose a positive, finite measure $\mu_{\pi}'$ equivalent to $\mu_{\pi}$ for which 
$$\int_{N_m} \mu_{\pi}'\geq t_m^{-M_0}$$
for some fixed, sufficiently large integer $M_0$.

Next, we must estimate 
$$F_m'(\sigma_m')=\langle m(\mathcal{O}_{\sigma_m'}),S_{m,\sigma_m'}\rangle$$
from $F_m$ where the measure on the orbit is now the canonical invariant measure. To estimate this volume, we use Lemma \ref{canonicalmeasure}. Recall that we wrote
$$F(\eta)m(\mathcal{O}_{\eta})_{\eta}=\operatorname{Eucl}(\mathcal{O}_{\eta})_{\eta}$$
By Lemma \ref{canonicalmeasure}, there exist constants $C>0$ and $N>0$ such that 
$$F(\eta)\geq C(1+(t_m-1))^{-N}=Ct_m^{-N}$$
whenever $\eta\in S_{m,\sigma_m'}$ with $\sigma_m'\in N_m$.
%first consider 
%$$S^1_{m,\sigma_m'}=\{\zeta\in \mathcal{O}_{\sigma_m'/t_m}\cap \Gamma_{\delta}|\ ||\zeta/t_m|-|\eta_m||<1/t_m\}.$$
%And we note
%$$\langle m((1/t_m)\cdot \mathcal{O}_{\sigma_m'}),S^1_{m,\sigma_m'}\rangle=t_m^{-k}F_m'(\sigma_m')$$
%for some integer $k$ (because of how the canonical measure scales under dilation). Utilizing Lemma 5, we observe
%$$\langle m((1/t_m)\cdot \mathcal{O}_{\sigma_m'}),S^1_{m,\sigma_m'}\rangle\geq c\langle %\operatorname{Eucl}((1/t_m)\cdot \mathcal{O}_{\sigma_m'}),S^1_{m,\sigma_m'}\rangle.$$
%But, then since the Euclidean measure also scales by a power of $t$ under dilation, we observe that the right hand side is just
%$$t^{-k'}F_m(\sigma_m').$$
%Thus, we get $$F_m'(\sigma_m')\geq ct^{k'-k}F_m(\sigma_m')\geq \frac{c}{2}t^{k_1+k'-k}.$$
Thus, we obtain $$F_m'(\sigma'_m)\geq Ct_m^{-N}F_m(\sigma_m')\geq (C/2)t_m^{-N-k_1}.$$
Putting all of this together, we estimate

\begin{align*} & \left|\int_{\sigma\in \widehat{G}_{\text{temp},i\mathfrak{h}^*_+}}\left(\int_{\mathcal{O}_{\sigma}}t_m^{-n/4}\mathcal{F}[\varphi]\left(\frac{t_m\eta_m-\zeta}{\sqrt{t_m}}\right)d(\mathcal{O}_{\sigma})_{\zeta}\right)d\mu_{\pi}'\right| \\
& \geq \left|\int_{\sigma\in N_m}\left(\int_{S_{m,\sigma}}t_m^{-n/4}\mathcal{F}[\varphi]\left(\frac{t_m\eta_m-\zeta}{\sqrt{t_m}}\right)d(\mathcal{O}_{\sigma})_{\zeta}\right)d\mu_{\pi}'\right| \\
& \geq \left|\int_{\sigma\in N_m}\left(\int_{S_{m,\sigma}}t_m^{-n/4}\cdot 1d(\mathcal{O}_{\sigma})_{\zeta}\right)d\mu_{\pi}'\right| \\
& \geq \left(\int_{\sigma\in N_m}d\mu_{\pi}'\right) \cdot t_m^{-n/4}\cdot \langle m(\mathcal{O}_{\sigma}),S_{m,\sigma}\rangle \\
& \geq (C/2) t_m^{-M_0-N-k_1-n/4}.
\end{align*}
This is what we needed to prove. The Proposition now follows.
\end{proof}

\section{Wave Front Sets of Pieces of the Regular Representation Part II}
\label{sec:regular_part2}

As explained in the beginning of the last section, we now prove the second inclusion necessary for the proof of Theorem \ref{regularintro}.

\begin{proposition} \label{ssregular} If $G$ is a real, reductive algebraic group and $\pi$ is weakly contained in the regular representation of $G$, then
$$\operatorname{SS}(\pi)\subset \operatorname{AC}\left(\mathcal{O}\operatorname{-}\operatorname{supp}\pi\right).$$
\end{proposition}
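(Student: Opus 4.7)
The strategy is a Fourier-analytic argument on the Lie algebra, leveraging Rossmann's identification of the pulled-back tempered character $\theta_\sigma$ with the Euclidean Fourier transform of the canonical measure on the associated coadjoint orbit $\mathcal{O}_\sigma$. The idea is to realize the distributions attached to $\pi$ (via Howe's trace class formulation) as Fourier transforms of tempered distributions on $i\mathfrak{g}^*$ that are supported in $\mathcal{O}\text{-}\operatorname{supp}\pi$, and then to invoke a general principle that forces the singular spectrum at the origin into the asymptotic cone of the Fourier support.

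First I would reduce and pass to traces. By Lemma \ref{Lem:WeylChamber_Reduction} it suffices to prove the inclusion for a single Weyl-chamber summand $\pi=\pi_{i\mathfrak{h}^*_+}$, and by Proposition \ref{howeusequiv} it is enough to show
$$\operatorname{SS}_e(\operatorname{Tr}_\pi(T))\subset \operatorname{AC}(\mathcal{O}\text{-}\operatorname{supp}\pi)$$
for every trace class operator $T$. Decomposing $T$ as a measurable field $\{T_\sigma\}$ along the direct integral $\pi\cong\int^\oplus \sigma\, d\mu_\pi$, we have, in exponential coordinates near the identity,
$$j_G^{1/2}(X)\,\operatorname{Tr}_\pi(T)(\exp X)=\int_\sigma j_G^{1/2}(X)\,\operatorname{Tr}(\sigma(\exp X)T_\sigma)\,d\mu_\pi(\sigma).$$
The crucial step is an operator-weighted variant of Rossmann's formula: each integrand is, as a tempered distribution on $\mathfrak{g}$, the Fourier transform of a tempered distribution on $i\mathfrak{g}^*$ supported in $\mathcal{O}_\sigma$. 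For $T_\sigma$ proportional to a $K$-isotypic projection this follows from Rossmann's classical identity for partial characters, and for general trace class $T_\sigma$ one reduces to that case by expanding in a $K$-basis and using the polynomial bounds on $K$-type multiplicities and sizes developed in the proof of Lemma \ref{wfcharacter}. After multiplying by a smooth cutoff to obtain a globally tempered object, integration over $\sigma$ realizes the localized distribution as the Fourier transform of a tempered distribution on $i\mathfrak{g}^*$ whose support lies in the closure of $\mathcal{O}\text{-}\operatorname{supp}\pi$.

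Second, I would establish and apply the following Fourier-support lemma: \emph{if $u$ is a tempered distribution on a finite-dimensional real vector space $W$ whose Fourier transform is supported in a closed set $S\subset iW^*$, then $\operatorname{SS}_0(u)\subset \operatorname{AC}(S)$.} Using the Bros-Iagolnitzer characterization of Definition \ref{ssdefbi}, for $\xi\notin \operatorname{AC}(S)$ fix open cones $\xi\in \mathcal{C}'\Subset \mathcal{C}$ with $\mathcal{C}\cap S$ bounded. The windowed Fourier transform $\mathcal{F}[\mathcal{G}_t(|X|)\varphi(X)u](t\eta)$ is a convolution of $\hat u$ with a Gaussian kernel of width $\sim t^{-1/2}$ evaluated at $t\eta$ (the real-analytic bump $\varphi$ contributes only a rapidly decaying correction). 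Splitting $S=(S\cap \mathcal{C})\cup(S\setminus \mathcal{C}')$, the factor $e^{-|t\eta-\zeta|^2/(4t)}$ achieves exponential decay of order $e^{-ct|\eta|^2}$ on both pieces, uniformly for $\eta\in \mathcal{C}'$ near $\xi$: on the bounded piece because $|\zeta|\le R$ forces $|t\eta-\zeta|\gtrsim t|\eta|$, and on the other piece by an angular-separation inequality $|t\eta-\zeta|^2\ge \sin^2(\alpha)\max(t^2|\eta|^2,|\zeta|^2)$. Applying this to the distribution constructed in the previous paragraph yields $\operatorname{SS}_e(\operatorname{Tr}_\pi(T))\subset \operatorname{AC}(\mathcal{O}\text{-}\operatorname{supp}\pi)$, and hence the claimed bound on $\operatorname{SS}(\pi)$ by $\operatorname{Ad}^*(G)$-invariance of both sides.

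The principal obstacle is the operator-weighted Rossmann identification and the measurability/integrability needed to push it through the $\mu_\pi$-integral. Rossmann's theorem is classically stated for the full character $\Theta_\sigma$, so extending $\mathcal{F}[\mathcal{O}_\sigma]=\theta_\sigma$ to the trace-weighted functions $\operatorname{Tr}(\sigma(\exp X)T_\sigma)$ in a way that is measurable in $\sigma$ and uniform enough to integrate will require a careful $K$-isotypic decomposition combined with the polynomial growth estimates on $K$-type data from Lemma \ref{wfcharacter}. The Fourier-support lemma of the third paragraph is elementary in principle but should be isolated and proved as a self-contained result, since it is precisely the step that converts Fourier support into the analytic (rather than merely smooth) wave front set at the origin, which is what distinguishes this direction of Theorem \ref{regularintro} from its smooth analogue.
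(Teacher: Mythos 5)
Your second step---the Fourier-support lemma asserting that a tempered distribution whose Fourier transform is supported in a closed set $S$ has singular spectrum at the origin contained in $\operatorname{AC}(S)$---is correct and is exactly what the paper uses (it is Lemma 8.4.17 of H\"{o}rmander, invoked to obtain the containment (\ref{eq:SS_bound})). The problem is the first step. The ``operator-weighted variant of Rossmann's formula'' that you identify as the principal obstacle is not a bookkeeping issue: it is false. Rossmann's identity $\mathcal{F}[\mathcal{O}_{\sigma}]=\theta_{\sigma}$ holds for the \emph{full} character, and there is no version of it for $\operatorname{Tr}(\sigma(\exp X)T_{\sigma})$ with $T_{\sigma}$ a general trace class operator, nor even for a single $K$-isotypic projection. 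If such partial traces were Fourier transforms of distributions supported exactly on $\mathcal{O}_{\sigma}$, then already for a rank-one $T_{\sigma}$ every matrix coefficient of $\sigma$ would (after localization) have Fourier transform concentrated on the orbit, which would make the hard inclusion of Theorem \ref{regularintro} essentially trivial and would say far more about individual matrix coefficients than is true. What one actually knows is only the asymptotic statement $\operatorname{WF}_e(\sigma(g)u,v)\subset\operatorname{AC}(\mathcal{O}_{\sigma})$, which is (part of) the conclusion you are trying to prove, so the argument as written is circular at its core.

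The paper escapes this by never taking the Fourier transform of an operator-weighted trace. Instead it bounds $\bigl|\int_G(\pi(g)u,v)\varphi_{N,U_1,U}(g)e^{t\eta(\log g)}dg\bigr|$ by Cauchy--Schwarz in the direct integral, which produces $\int_{\sigma}|\sigma(\varphi_{N,U_1,U}e^{t\eta(\log)})|_{HS}^2|u_{\sigma}|^2d\mu_{\pi}$, and then applies Howe's identity
$$\int_G \overline{\varphi}(g^{-1})e^{t\eta(\log)}\langle \Theta_{\sigma}, l_g[\varphi e^{t\eta(\log)}]\rangle\, dg=|\sigma(\varphi e^{t\eta(\log)})|_{HS}^2$$
to convert this into a pairing of the \emph{scalar}-weighted character integral $\int_{\sigma}\Theta_{\sigma}|u_{\sigma}|^2d\mu_{\pi}$ (whose Fourier transform genuinely is $\int_{\sigma}\mathcal{O}_{\sigma}|u_{\sigma}|^2d\mu_{\pi}$, supported in $\mathcal{O}\operatorname{-}\operatorname{supp}\pi$, by Lemma \ref{tempereddist}) against the family of left-translated test functions $l_g[\varphi e^{t\eta(\log)}]$. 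The price of this maneuver is that one must establish the singular-spectrum decay estimate uniformly in the translation parameter $g$; this is the entire content of the technical Lemma \ref{boundaryvalues}, proved via boundary values of analytic functions. If you want to salvage your outline, you should replace the operator-weighted Rossmann step with this Cauchy--Schwarz/Howe-identity reduction and then confront the uniformity-in-$g$ issue, which your proposal does not address.
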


Utilizing Lemma \ref{Lem:WeylChamber_Reduction}, we may assume $\operatorname{supp}\pi\subset \widehat{G}_{\text{temp},i\mathfrak{h}^*_+}$
for a fixed Weyl chamber $i\mathfrak{h}^*_+$ in $i$ times the dual of a fixed Cartan subalgebra $\mathfrak{h}\subset \mathfrak{g}$. We will make this assumption throughout this section.

First, we require a technical Lemma. Suppose $W$ is a finite dimensional, real vector space, and suppose $0\in U_1\subset U_2\subset W$ are precompact, open subsets of $W$ with $\overline{U_1}\subset U_2$. Recall from Section 2 that we have fixed a family of functions $\varphi_{N,U_1,U_2}$ satisfying certain properties including (\ref{eq:varphi_family_bound}).

\begin{lemma} \label{boundaryvalues} Suppose $W$ is a finite-dimensional real vector space, suppose $\widetilde{W}$ is an open neighborhood of zero in another finite-dimensional real vector space, and suppose we have an analytic map $$\psi\colon \widetilde{W}\times W\rightarrow W$$ such that for each $p\in \widetilde{W}$, $\psi_p$ is locally bianalytic and $\psi_0=I$ is the identity. Suppose $u$ is a distribution on $W$, suppose $(x,\xi)\notin \operatorname{SS}(u)$, and suppose $a$ is an analytic function on $W$. Then one can find an open set $0\in \widetilde{U}\subset \widetilde{W}$, an open set $\xi\in \Omega\subset iW^*$, and an open set $x\in U_2\subset W$ such that for every pair of precompact open sets $x\in U_1\subset U\subset U_2\subset W$ with $U_1$ compactly contained in $U$, there exists a constant $C_{U_1,U}>0$ such that 
$$\left|\mathcal{F}\left[a\left(\psi_p^*u\right)\varphi_{N,U_1,U}\right](t\eta)\right|\leq C_{U_1,U}^{N+1}(N+1)^Nt^{-N}$$
whenever $p\in \widetilde{U}$, $\eta\in \Omega$, and $t>0$.
\end{lemma}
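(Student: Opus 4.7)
The plan is to reduce Lemma \ref{boundaryvalues} to the boundary-value characterization of the analytic wave front set (Sections 8.4--8.5 of \cite{Hor}), combined with a uniform-in-$p$ perturbation argument exploiting the real-analyticity of $\psi$ and the condition $\psi_0 = I$. First, since $(x,\xi)\notin \operatorname{SS}(u)$, the boundary-value representation furnishes a complex open neighborhood $V_0 \subset W_{\mathbb{C}}$ of $x$, finitely many open convex cones $\Gamma_1,\ldots,\Gamma_m \subset W$ whose closed dual cones do not contain $\xi$, a ball $B_\epsilon \subset W$ of small radius, and holomorphic functions $F_j$ on the wedges $V_0 + i(\Gamma_j \cap B_\epsilon)$ with temperate growth near the edge, such that $u = \sum_{j=1}^m \mathrm{bv}(F_j)$ on $V_0 \cap W$.

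Next, real-analyticity of $\psi$ in both variables allows a holomorphic extension $\widetilde{\psi}\colon \widetilde{U}_0 \times V_1 \to V_0$ on suitable complex neighborhoods, after shrinking $\widetilde{W}$ to $\widetilde{U}_0$ and $V_0$ to $V_1 \ni x$. Because $\psi_0 = I$, the derivative $d\widetilde{\psi}_p$ depends continuously on $p$ with $d\widetilde{\psi}_0 = I$. A standard perturbation of open convex cones then yields slightly narrower cones $\Gamma_j' \subset \Gamma_j$ whose closed duals still miss $\xi$, together with a smaller ball $B_{\epsilon'}$, such that after further shrinking to a neighborhood $\widetilde{U}$ of $0$,
$$\widetilde{\psi}_p\bigl((V_1 \cap W) + i(\Gamma_j' \cap B_{\epsilon'})\bigr) \subset V_0 + i(\Gamma_j \cap B_\epsilon), \qquad p \in \widetilde{U},\ j = 1,\ldots,m.$$
Hence each $F_j \circ \widetilde{\psi}_p$ is holomorphic on the fixed wedge $(V_1 \cap W) + i(\Gamma_j' \cap B_{\epsilon'})$ with temperate bounds uniform in $p \in \widetilde{U}$. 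Multiplication by $a$, which extends to a bounded holomorphic function on a complex neighborhood of $\overline{V_1 \cap W}$, preserves this structure, yielding the uniform boundary-value representation $a \cdot \psi_p^* u = \sum_{j=1}^m \mathrm{bv}\bigl(a \cdot (F_j \circ \widetilde{\psi}_p)\bigr)$ on $V_1 \cap W$ with $p$-uniform holomorphic bounds.

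I would then choose an open neighborhood $\xi \in \Omega \subset iW^*$ small enough to remain bounded away from each closed dual cone of $\Gamma_j'$, and set $U_2 := V_1 \cap W$. For any pair $U_1 \subset U \subset U_2$ with $U_1$ compactly contained in $U$, the standard contour-shift argument used to derive Definition \ref{ssdefh} from the boundary-value representation (as carried out in Section 8.4 of \cite{Hor}) deforms the contour of the oscillatory integral defining $\mathcal{F}[a\psi_p^* u \, \varphi_{N,U_1,U}](t\eta)$ into $W + i(\Gamma_j' \cap B_{\epsilon'/2})$; integration by parts combined with the derivative bounds (\ref{eq:varphi_family_bound}) then produces the desired estimate $C^{N+1}(N+1)^N t^{-N}$. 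The constant $C = C_{U_1,U}$ depends only on $U_1$, $U$, on the cones $\Gamma_j'$, on the separation of $\Omega$ from the dual cones, and on the uniform sup-norms of $a \cdot (F_j \circ \widetilde{\psi}_p)$ on the wedges; in particular it is independent of $p \in \widetilde{U}$, $\eta \in \Omega$, and $t > 0$.

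The main obstacle is securing the uniformity in $p$ throughout. The delicate point is the cone-perturbation step: the cones $\Gamma_j$ must be shrunk to $\Gamma_j'$ without losing the separation of their closed duals from $\xi$, and the holomorphic extension $\widetilde{\psi}_p$ must map the fixed real wedge into the original complex wedge of $F_j$. Both conditions can be arranged simultaneously because the original dual cones are closed sets not containing $\xi$, so a small cone perturbation preserves the separation, and because $\widetilde{\psi}_p$ converges to the identity uniformly on compacta in the complex domain as $p \to 0$.
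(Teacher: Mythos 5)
Your proposal is correct and follows essentially the same route as the paper's proof: decompose $u$ into boundary values of holomorphic functions on wedges via Corollary 8.4.13 and Theorem 8.4.15 of H\"{o}rmander, complexify $\psi$ and use $\psi_0=I$ to map slightly shrunken wedges into the original ones uniformly for $p$ near $0$, multiply by $a$, and then track that the constants in the estimate of Theorem 8.4.8 depend only on the temperate bounds, the cones, and the cut-off family $\varphi_{N,U_1,U}$, hence are uniform in $p$.
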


The thing that makes this Lemma non-trivial is the uniformity of the bound in the variable $p\in \widetilde{U}$. We will prove it by relating the singular spectrum to boundary values of analytic functions, utilizing Sections 8.4 and 8.5 of \cite{Hor}.

\begin{proof} Since $\operatorname{SS}(u)\subset iT^*W$ is a closed set, we may choose an open set $x\in U_3\subset W$ and an open cone $\xi\in \mathcal{C}(1)$ such that $U_3\times \mathcal{C}(1)\subset iT^*W-\operatorname{SS}(u)$. Next, fix an open cone $$\xi\in \mathcal{C}(2)\subset \overline{\mathcal{C}(2)}\subset \mathcal{C}(1).$$ If $\mathcal{C}\subset W$ is an open convex cone, we may form the dual cone 
$$\mathcal{C}^0=\{\xi\in iW^*|\ i\langle \xi,y\rangle\leq 0\ \forall\ y\in W\}.$$ If $\eta\in iW^*-\overline{\mathcal{C}(2)}$, we may find a cone of the form $\mathcal{C}^0$, which is the dual cone of an open convex cone $\mathcal{C}$, such that $\eta\in \mathcal{C}^0\subset iW^*-\overline{\mathcal{C}(2)}$. Fixing an inner product on the finite-dimensional real vector space $W$ and using the compactness of $\mathbb{S}^{\dim W-1}\cap (iW^*-\mathcal{C}(1))$, we may choose a finite subcover $\mathcal{C}_1^0,\ldots,\mathcal{C}_k^0$ of $iW^*-\mathcal{C}(1)$. Here each $\mathcal{C}_j^0$ is the dual cone of an open convex cone $\mathcal{C}_j$. In particular, we have
$$\bigcup_{w\in U_3} \operatorname{SS}_w(u)\subset U_3\times \bigcup_{i=1}^k \mathcal{C}_i^0,\ \xi\in iW^*-\bigcup_{i=1}^k \overline{\mathcal{C}_i^0}.$$
Now, in addition, we may choose $(\mathcal{C}_j')^0$ such that $\mathcal{C}_j^0$ is contained in the interior of $(\mathcal{C}_j')^0$, $\xi\notin (\mathcal{C}_j')^0$ for any $j$, and $(\mathcal{C}_j')^0$ is the dual cone of an open convex cone $\mathcal{C}_j'\subset \mathcal{C}_j$. 
Utilizing Corollary 8.4.13 of \cite{Hor}, we may write $u=\sum_{j=1}^k u_j$ with 
$$\bigcup_{w\in U_3} \operatorname{SS}_w(u_j)\subset U_3\times \mathcal{C}_j^0$$ We note that to obtain the estimate in the Lemma for the distribution $u$, it is enough to obtain the estimate for each distribution $u_j$. 

Next, choose $x\in U_4\subset U_3$ an open subset with $\overline{U_4}\subset U_3$ a compact subset. If $\gamma>0$ is a real number, define $$\mathcal{C}_j(\gamma)=\{y\in \mathcal{C}_j|\ |y|<\gamma\}.$$ By the remark after Theorem 8.4.15 of \cite{Hor}, for some $\gamma_j>0$, we may find an analytic function $F_j$ in $U_4+i\mathcal{C}_j(\gamma_j)\subset W_{\mathbb{C}}$, where $W_{\mathbb{C}}=W\otimes_{\mathbb{R}}\mathbb{C}$ is the complexification of the vector space $W$, such that $F_j$ satisfies an estimate
$$|F_j(x+iy)|\leq C_j|y|^{-N_j}$$
in $U_4+i\mathcal{C}_j(\gamma)$ and 
$$u_j=\lim_{y\rightarrow 0,\ y\in \mathcal{C}_j(\gamma_j)}F_j(\cdot+iy).$$
Here the limit is taken in the space of distributions on $W$. Next, we may complexify the maps $\psi_p$ to attain the map
$$\psi_{\mathbb{C}}\colon \widetilde{W}\times W_{\mathbb{C}}\rightarrow W_{\mathbb{C}}$$
which is real analytic in the first variable and complex analytic in the second. Taylor expand each $\psi_{\mathbb{C}}$ at $(0,x)\in \widetilde{W}\times W_{\mathbb{C}}$ as a function of $v\in W_{\mathbb{C}}$ with coefficients that are real analytic functions in $p\in \widetilde{W}$. One sees from this expansion that we may find open sets $x\in U_2\subset U_4$ and $0\in \widetilde{U}\subset \widetilde{W}$ together with positive constants $\gamma_j'>0$ such that 
$$\psi_p(U_2+i\mathcal{C}_j'(\gamma_j'))\subset U_4+i\mathcal{C}_j(\gamma_j)$$
for every $p\in \widetilde{U}$ and every $j=1,\ldots,k$. After possibly shrinking $U_2$, $\widetilde{U}$ and decreasing $\gamma_j'$, we see from the Taylor expansion that we may in addition assume 
$$|y|/2\leq |\operatorname{Im}\psi_p(x+iy)|\leq 2|y|$$
for all $p\in \widetilde{U}$, $x+iy\in U_2+i\mathcal{C}_j'(\gamma_j')$. From now on, we will write $u_j$ for the restriction of $u_j$ to $U_2$ and $F_j$ for the restriction of $F_j$ to $U_2+i\mathcal{C}_j'(\gamma_j')$. As in the proof of Theorem 8.5.1 of \cite{Hor}, we now have
$$\psi_p^*u_j=\lim_{y\rightarrow 0,\ y\in \mathcal{C}_j'(\gamma_j')}\psi_p^*F_j(\cdot+iy)$$
for $p\in \widetilde{U}$ and $j=1,\ldots,k$. In addition, we obtain the bounds
$$|(\psi_p^*F_j)(x+iy)|\leq 2^NC_j|y|^{-N_j}=C_j'|y|^{-N_j}$$
uniform in $p\in \widetilde{U}$. 

Of course, we may multiply through by our analytic function $a$ to obtain
$$a \psi_p^*u_j=\lim_{y\rightarrow 0,\ y\in \mathcal{C}_j'(\gamma_j')}a \psi_p^*F_j(\cdot+iy)$$
for $p\in \widetilde{U}$ and $j=1,\ldots,k$ and 
$$|a(\psi_p^*F_j)(x+iy)|\leq C_j''|y|^{-N_j}$$
uniform in $p\in \widetilde{U}$. 

Now, we use these uniform bounds on $a\psi_p^*F_j$ to obtain uniform bounds on $$\mathcal{F}\left[a\left(\psi_p^*u\right)\varphi_{N,U_1,U}\right].$$ To do this, we utilize the proof of Theorem 8.4.8 of \cite{Hor}. We observe that the constant $C_4$ in (8.4.9) on the top of page 286 of \cite{Hor} depends only on the constants $C_j'$, $N_j$ in the above bound on $a\psi^*F_j$ and on the functions $\varphi_{N,U_1,U}$. Since these constants are uniform in $p$, we obtain the necessary bounds on $\mathcal{F}\left[a\left(\psi_p^*u\right)\varphi_{N,U_1,U}\right]$ uniform in $p\in \widetilde{U}$ and the Lemma has been proven.
\end{proof}

Now, suppose $(\pi,V)$ is a unitary representation of a real, reductive algebraic group $G$ that is weakly contained in the regular representation. Decompose
$$\pi\cong \int_{\sigma\in \widehat{G}_{\text{temp}}} \sigma^{\oplus m(\pi,\sigma)}d\mu_{\pi}$$
into irreducibles. As noted before, by Lemma \ref{Lem:WeylChamber_Reduction}, we may assume without loss of generality that $\mu_{\pi}$ is a finite, positive measure on $\widehat{G}_{\text{temp},i\mathfrak{h}^*_+}$ for a fixed Weyl chamber $i\mathfrak{h}^*_+$. By Lemma \ref{tempereddist}, if $\mu_{\pi}$ is a finite positive measure on $\widehat{G}_{\text{temp},i\mathfrak{h}^*_+}$ and $f\in L^1(\widehat{G}_{\text{temp},i\mathfrak{h}^*_+},\mu_{\pi})$, then the integral 
$$\int_{\sigma\in \widehat{G}_{\text{temp},i\mathfrak{h}^*_+}} \theta_{\sigma} f(\sigma) d\mu_{\pi}$$
defines a tempered distribution on $i\mathfrak{g}^*$. Moreover, by the proof of Lemma \ref{tempereddist}, we see that the Fourier transform of this tempered distribution is
$$\int_{\sigma\in \operatorname{supp}\pi} \mathcal{O}_{\sigma} f(\sigma) d\mu_{\pi}.$$
Clearly this distribution is supported in $\mathcal{O}\operatorname{-}\operatorname{supp}\pi$. Therefore, by Lemma 8.4.17 on page 194 of \cite{Hor}, we deduce 
\begin{equation}\label{eq:SS_bound}
\operatorname{SS}_0\left(\int_{\sigma\in \operatorname{supp}\pi} \theta_{\sigma} f(\sigma) d\mu_{\pi}\right)\subset \operatorname{AC}(\mathcal{O}\operatorname{-}\operatorname{supp}\pi)
\end{equation}
whenever $\mu_{\pi}$ is a finite, positive measure associated to the representation $\pi$.

Next, we prove Proposition \ref{ssregular}. In this argument, we will set $W=\mathfrak{g}$ and form sequences of the form $\varphi_{N,U_1,U}$ satisfying the properties outlined in Section \ref{sec:def_wf} including (\ref{eq:varphi_family_bound}).  
\begin{proof} As in the statement of Proposition \ref{ssregular}, fix a unitary representation $(\pi,V)$ of a real, reductive algebraic group $G$ that is weakly contained in the regular representation. Choose $\xi\notin \operatorname{AC}(\mathcal{O}\operatorname{-}\operatorname{supp}\pi)$. We must show that for every $u,v\in V$, there exists an open set $\xi\in \Omega\subset i\mathfrak{g}^*$ and a constant $C>0$ such that

$$\left|\int_{G}(\pi(g)u,v)\varphi_{N,U_1,U}(g)e^{t\eta(\log g)}dg\right|\leq C^{N+1}(N+1)^Nt^{-N}$$
for every $\eta\in \Omega$ and $t>0$.

For each $\sigma\in \widehat{G}_{\text{temp.}}$, we abuse notation and write $(\sigma,V_{\sigma})$ for a representative of this equivalence class of irreducible tempered representations. We have a direct integral decomposition
$$V\cong \int_{\operatorname{supp}\pi} V_{\sigma}^{\oplus m(\pi,\sigma)}d\mu_{\pi}(\sigma).$$
%For each $\sigma\in \operatorname{supp}\pi$, $M_{\sigma}$ is a multiplicity space on which $G$ acts trivially.

Now, if $u=(u_{\sigma})$ and $v=(v_{\sigma})$ in our direct integral decompositions, then we have
$$(\pi(g)u,v)=\int_{\sigma\in \operatorname{supp}\pi}(\sigma(g)u_{\sigma},v_{\sigma})d\mu_{\pi}(\sigma).$$
Thus our integral becomes
\begin{align*} & \left|\int_{G}(\pi(g)u,v)\varphi_{N,U_1,U}(g)e^{t\eta(\log g)}dg\right|\\
& =\left|\int_{G}\varphi_{N,U_1,U}(g)e^{t\eta(\log g)}\int_{\sigma\in \operatorname{supp}\pi}(\sigma(g)u_{\sigma},v_{\sigma})d\mu_{\pi}(\sigma)dg\right| \\
& =\left|\int_{\sigma\in \operatorname{supp}\pi}\int_{G}\varphi_{N,U_1,U}(g)e^{t\eta(\log g)}(\sigma(g)u_{\sigma},v_{\sigma})dgd\mu_{\pi}(\sigma)\right| \\
& =\left|\int_{\sigma\in \operatorname{supp}\pi}(\sigma(\varphi_{N,U_1,U}e^{t\eta(\log)})u_{\sigma},v_{\sigma})\mu_{\pi}(\sigma)\right| \\
& \leq \int_{\sigma\in \operatorname{supp}\pi}\left|\sigma(\varphi_{N,U_1,U}e^{t\eta(\log)})u_{\sigma}\right| \cdot \left|v_{\sigma}\right|d\mu_{\pi}(\sigma)\\
& \leq \left(\int_{\sigma\in \operatorname{supp}\pi} \left|\sigma(\varphi_{N,U_1,U}e^{t\eta(\log)})\right|_{HS}^2 \cdot \left|u_{\sigma}\right|^2d\mu_{\pi}(\sigma)\right)^{1/2}\left(\int_{\sigma\in \operatorname{supp}\pi} |v_{\sigma}|^2d\mu_{\pi}(\sigma)\right)^{1/2}.
\end{align*}

Here $|\cdot|_{HS}$ denotes the Hilbert-Schmidt norm of an operator on $V_{\sigma}$. Moreover, we are abusing notation and writing $\sigma(\varphi_{N,U_1,U})$ for the action of $\varphi_{N,U_1,U}$ on $V_{\sigma}\otimes M_{\sigma}$ as well as $V_{\sigma}$. The second integral is a constant. Therefore, we may focus on the first integral.

Next, we use a calculation of Howe (see page 128 of \cite{How}). For $\sigma\in \widehat{G}_{\text{temp}}$, we have
$$\int_G \overline{\varphi_{N,U_1,U}}(g^{-1})e^{\eta(\log)}\langle \Theta_{\sigma}, l_g[\varphi_{N,U_1,U}e^{\eta(\log)}]\rangle dg=|\sigma(\varphi_{N,U_1,U}e^{\eta(\log)})|_{HS}^2.$$
Integrating both sides over $\sigma\in \operatorname{supp}\pi$ with respect to $|u_{\sigma}|^2 d\mu_{\pi}(\sigma)$ yields
$$\int_G \overline{\varphi_{N,U_1,U}}(g^{-1})e^{\eta(\log)}\langle \int_{\sigma\in \operatorname{supp}\pi}\Theta_{\sigma}|u_{\sigma}|^2d\mu_{\pi}(\sigma), l_g[\varphi_{N,U_1,U}e^{\eta(\log)}]\rangle dg$$
$$=\int_{\sigma\in \operatorname{supp}\pi}|\sigma(\varphi_{N,U_1,U}e^{\eta(\log)})|_{HS}^2|u_{\sigma}|^2d\mu_{\pi}(\sigma).$$
We observe that getting the proper bounds for the right hand side is what we need in order to prove our Proposition. We will obtain them by bounding the left hand side utilizing Lemma \ref{boundaryvalues} together with the remarks afterwards.

Choose an open set $0\in \widetilde{V}\subset \mathfrak{g}$ such that $\exp\colon \widetilde{V}\rightarrow \exp(\widetilde{V})$ is a bianalytic isomorphism onto its image. We apply Lemma \ref{boundaryvalues} with $V=\mathfrak{g}$, $\widetilde{V}$ as above,
$$\psi\colon \widetilde{V}\times \mathfrak{g}\rightarrow \mathfrak{g}$$
by $(Y,X)\mapsto \log(\exp Y\exp X)$, $a=j_G^{1/2}$, and  
$$u=\int_{\sigma\in \operatorname{supp}\pi} \theta_{\sigma}|u_{\sigma}|^2d\mu_{\pi}(\sigma).$$
Moreover, utilizing (\ref{eq:SS_bound}) with $f(\sigma)=|u_{\sigma}|^2$, we obtain $(0,\xi)\notin \operatorname{SS}_0(u)$ since by hypothesis $\xi\notin \operatorname{AC}(\mathcal{O}\operatorname{-}\operatorname{supp}\pi)$. Then Lemma \ref{boundaryvalues} assures us of the existence of open sets $0\in \log(U_1)\subset \log(U)\subset \log(\widetilde{U})\subset \widetilde{V}$ such that the closure of $\log(U_1)$ is contained in the interior of $\log(U)$ together with an open set $\xi\in \Omega\subset i\mathfrak{g}^*$ and a constant $C>0$ such that

$$\left|\int_{\sigma\in \operatorname{supp}\pi} \left(\int_{\mathfrak{g}} j_G^{1/2}(X)\theta_{\sigma}(\exp Y \exp X) (\exp^*\varphi_{N,U_1,U})(X)e^{t\eta(X)}dX\right) |u_{\sigma}|^2d\mu_{\pi}(\sigma)\right|$$
$$\leq C^{N+1}(N+1)^Nt^{-N}$$
whenever $\eta\in \Omega$, $Y\in \log(\widetilde{U})$, and $t>0$.

Pulling back to the group, we obtain
$$\left|\int_{\sigma\in \operatorname{supp}\pi} \left(\int_{G} \Theta_{\sigma}(gh) \varphi_{N,U_1,U}(h)e^{t\eta(\log(h))}dh\right) |u_{\sigma}|^2d\mu_{\pi}(\sigma)\right|$$
$$\leq C(C(N+1))^Nt^{-N}$$
whenever $g\in \widetilde{U}$, $\eta\in \Omega$, and $t>0$. Substituting and changing the order of integration yields
$$\left|\langle \int_{\sigma\in \operatorname{supp}\pi} \Theta_{\sigma}|u_{\sigma}|^2d\mu_{\pi}(\sigma), l_g\left[\varphi_{N,U_1,U}(h)e^{t\eta(\log(h))}\right]\rangle \right|$$
$$\leq C^{N+1}(N+1)^Nt^{-N}$$
whenever $g\in \widetilde{U}$, $\eta\in \Omega$, and $t>0$. Finally, if we integrate over $g$ in a precompact set with respect to a smooth density multiplied by a bounded function, this will simply multiply the bound by a constant, which we may incorporate into $C$. Thus, we obtain
$$\left|\int_G \overline{\varphi_{N,U_1,U}}(g^{-1})e^{\eta(\log)}\langle \int_{\sigma\in \operatorname{supp}\pi}\Theta_{\sigma}|u_{\sigma}|^2\mu_{\pi}(\sigma), l_g[\varphi_{N,U_1,U}e^{\eta(\log)}]\rangle dg\right|$$
$$\leq C^{N+1}(N+1)^Nt^{-N}$$
for $\eta\in \Omega$ and $t>0$. Tracing back through our calculations, we see that we obtain
$$\left|\int_{G}(\pi(g)u,v)\varphi_{N,U_1,U}(g)e^{t\eta(\log g)}dg\right|\leq C^{(N+1)/2}(N+1)^{N/2} t^{-N/2}$$
for $\eta\in \Omega$ and $t>0$. We simply replace $N$ by $2N$ and note that the sequence $\varphi_{2N,U_1,U}$ still satisfies the necessary conditions needed for Definition \ref{ssdefh}. Then we obtain
$$\left|\int_{G}(\pi(g)u,v)\varphi_{2N,U_1,U}(g)e^{t\eta(\log g)}dg\right|\leq (C')^{N+1}(N+1)^{N} t^{-N}$$
for $\eta\in \Omega$ and $t>0$. Proposition \ref{ssregular} and Theorem \ref{regularintro} now follow.
\end{proof}

\section{Examples and Applications}
\label{sec:examples_apps}

In this section, we will give examples of our results in the case $G=\operatorname{SL}(2,\mathbb{R})$. Then we will briefly mention applications to branching problems and harmonic analysis questions.

\subsection{Wave Front Sets of Representations of $G=\operatorname{SL}(2,\mathbb{R})$}
\label{sl(2)}

First, we consider the special case of the group $G=\operatorname{SL}(2,\mathbb{R})$. We identify $\mathfrak{g}=\mathfrak{sl}(2,\mathbb{R})$ with $\mathbb{R}^3$ via
$$(x,y,z)\mapsto \left(\begin{matrix} x & y-z\\ y+z & -x \end{matrix}\right).$$ 
In addition, we identify $\mathfrak{g}\cong \mathfrak{g}^*$ using the trace form,
$$X\mapsto (Y\mapsto \operatorname{Tr}(XY)).$$
Dividing by $i$, we obtain a (non-canonical) isomorphism $i\mathfrak{g}^*\cong \mathbb{R}^3$ which is useful for drawing pictures.
The coadjoint orbits of $\operatorname{SL}(2,\mathbb{R})$ come in several classes. First, we have the hyperbolic orbits, 
$$\mathcal{O}_{\nu}=\{(x,y,z)|\ x^2+y^2-z^2=\nu^2\}$$
for $\nu>0$. Next, we have two classes of elliptic orbits,
$$\mathcal{O}_n^+=\{(x,y,z)|\ z^2-x^2-y^2=n^2,\ z>0\},$$
$$\mathcal{O}_n^-=\{(x,y,z)|\ z^2-x^2-y^2=n^2,\ z<0\}$$
for any real number $n>0$. Then we have the two large pieces of the nilpotent cone
$$\mathcal{N}^+=\{(x,y,z)|\ x^2+y^2=z^2,\ z>0\},$$
$$\mathcal{N}^-=\{(x,y,z)|\ x^2+y^2=z^2,\ z<0\}.$$
And finally we have the zero orbit, $\{0\}$.

The irreducible, unitary representations of $\operatorname{SL}(2,\mathbb{R})$ also come in several classes. First, we have the spherical unitary principal series $\operatorname{\sigma}_{\nu,+}$ for $\nu\geq 0$ as well as the non-spherical unitary principal series $\sigma_{\nu,-}$ for $\nu>0$. Next, we have the holomorphic discrete series representations $\sigma_n^+$ and the antiholomorphic discrete series representations $\sigma_n^-$ for $n\in \mathbb{Z}_{>0}$. Here we have parametrized the discrete series by infinitesimal character. In addition, the terms `holomorphic' and `antiholomorphic' come from the standard holomorphic structure on the upper half plane and the standard identification of $\operatorname{SL}(2,\mathbb{R})/\operatorname{SO}(2,\mathbb{R})$ with the upper half plane. Finally, we have the limits of discrete series, $\sigma^+$ and $\sigma^-$.

There is also the trivial representation of $\operatorname{SL}(2,\mathbb{R})$ as well as the complementary series, but these representations are not tempered; hence, we will not consider them in this paper. 

Now, the representations $\sigma_{\nu,+}$ and $\sigma_{\nu,-}$ are associated to the orbit $\mathcal{O}_{\nu}$ for $\nu>0$, and the representation $\sigma_{0,+}$ is associated to the nilpotent cone $\mathcal{N}=\mathcal{N}^+\cup \mathcal{N}^-\cup \{0\}$. The representation $\sigma_n^+$ (respectively $\sigma_n^-$) is associated to the orbit $\mathcal{O}_n^+$ (respectively $\mathcal{O}_n^-$). And the representation $\sigma^+$ (respectively $\sigma^-$) is associated to the orbit $\mathcal{N}^+$ (respectively $\mathcal{N}^-$). 

Next, we utilize Theorem \ref{regularintro} to compute the wave front sets of some representations. One notes
$$\operatorname{WF}(\sigma_n^+)=\operatorname{AC}(\mathcal{O}_n^+)=\mathcal{N}^+,$$
$$\operatorname{WF}(\sigma_n^-)=\operatorname{AC}(\mathcal{O}_n^-)=\mathcal{N}^-,$$
$$\operatorname{WF}(\sigma_{\nu,+})=\operatorname{WF}(\sigma_{\nu,-})=\operatorname{AC}(\mathcal{O}_{\nu})=\mathcal{N}$$
for $\nu>0$. In addition,
$$\operatorname{WF}(\sigma_{0,+})=\operatorname{AC}(\mathcal{N})=\mathcal{N},$$
$$\operatorname{WF}(\sigma^+)=\operatorname{AC}(\mathcal{O}^+)=\mathcal{N}^+,$$
$$\operatorname{WF}(\sigma^-)=\operatorname{AC}(\mathcal{O}^-)=\mathcal{N}^-.$$

Of course, all of these computations of wave front sets of irreducible, unitary representations have been well-known for sometime because of the work of Barbasch-Vogan \cite{BV} and Rossmann \cite{R5}. What is new in this paper is our ability to compute wave front sets of representations that are far from irreducible. 

Suppose $A\subset \operatorname{SL}(2,\mathbb{R})$ is the set of diagonal matrices. Utilizing Theorem \ref{inducedintro}, we observe
$$\operatorname{WF}(L^2(\operatorname{SL}(2,\mathbb{R})/A))\supset \overline{\operatorname{Ad}^*(G)\cdot i(\mathfrak{g}/\mathfrak{a})^*}=i\mathfrak{g}^*.$$
Therefore, $\operatorname{WF}(L^2(\operatorname{SL}(2,\mathbb{R})/A))=i\mathfrak{sl}(2,\mathbb{R})^*$. Similarly, if $\Gamma\subset \operatorname{SL}(2,\mathbb{R})$ is a discrete subgroup, then 
$$\operatorname{WF}(L^2(\operatorname{SL}(2,\mathbb{R})/\Gamma))=i\mathfrak{g}^*.$$
Of course, one could deduce these first two facts from Theorem \ref{regularintro} together with the well-known decomposition of $L^2(G/A)$ and the existence of sufficiently many Poincare series and Eisenstein series for $\Gamma$. However, the authors like that we are able to compute these wave front sets without knowledge of these decompositions.

Next, we utilize Theorem \ref{regularintro}. Let $i\mathfrak{g}_{\text{hyp}}^*$ denote the set of hyperbolic elements in $i\mathfrak{g}^*$. Identifying $i\mathfrak{g}^*$ with $\mathbb{R}^3$ as above, we have
$$i\mathfrak{g}^*_{\text{hyp}}=\{(x,y,z)|\ x^2+y^2-z^2>0\}.$$ Let $i\mathfrak{g}^*_{\text{ell}}$ denote the set of elliptic elements in $i\mathfrak{g}^*$. Break this set up into two by
$$(i\mathfrak{g}^*_{\text{ell}})^+=\{(x,y,z)|\ z^2-x^2-y^2>0,\ z>0\},$$
$$(i\mathfrak{g}^*_{\text{ell}})^-=\{(x,y,z)|\ z^2-x^2-y^2>0,\ z<0\}.$$
If $K=\operatorname{SO}(2,\mathbb{R})$, then we have 
$$\operatorname{WF}\left(L^2(G/K)\right)=\operatorname{WF}\left(\int_{\nu>0} \sigma_{\nu,+}\right)=\operatorname{AC}\left(\bigcup_{\nu>0} \mathcal{O}_{\nu}\right)=\overline{i\mathfrak{g}^*_{\text{hyp}}}.$$
Similarly, we have
$$\operatorname{WF}\left(\int_{\nu>0} \sigma_{\nu,-}\right)=\overline{i\mathfrak{g}^*_{\text{hyp}}}.$$
In addition, we have
$$\operatorname{WF}\left(\bigoplus_{n>0} \sigma_n^+\right)=\operatorname{AC}\left(\bigcup_{n>0} \mathcal{O}_n^+\right)=\overline{(i\mathfrak{g}^*_{\text{ell}})^+},$$
$$\operatorname{WF}\left(\bigoplus_{n>0} \sigma_n^-\right)=\operatorname{AC}\left(\bigcup_{n>0} \mathcal{O}_n^-\right)=\overline{(i\mathfrak{g}^*_{\text{ell}})^-}.$$

\subsection{Wave Front Sets and Branching Problems}
\label{branching}

Next, we say a few words about branching problems. We recall the statement of Corollary \ref{resintro1}.

Suppose $G$ is a real, reductive algebraic group, suppose $H\subset G$ is a closed reductive algebraic subgroup, and suppose $\pi$ is a discrete series representation of $G$. Let $\mathfrak{g}$ (resp. $\mathfrak{h}$) denote the Lie algebra of $G$ (resp. $H$), and let $q\colon i\mathfrak{g}^*\rightarrow i\mathfrak{h}^*$ be the pullback of the inclusion. Then 
$$\operatorname{AC}(\mathcal{O}\operatorname{-}\operatorname{supp}(\pi|_H))\supset q(\operatorname{WF}(\pi)).$$

This Corollary follows directly from Theorem \ref{regularintro}, Proposition 1.5 of \cite{How}, and the fact that the restriction of a discrete series to a reductive subgroup is weakly contained in the regular representation (see for instance Theorem 3 of \cite{OV}, though this is neither the first nor the easiest proof of this fact). 

We show how to utilize this Corollary in a simple example. First, let $G=\operatorname{SU}(2,1)$ and let $H=\operatorname{SO}(2,1)_e\cong \operatorname{PSL}(2,\mathbb{R})$ be the identity component of the subgroup of $G$ consisting of real matrices. If $\pi$ is a quaternionic discrete series of $G$, then one can show $$\operatorname{WF}(\pi)=\mathcal{N}_G$$
where $\mathcal{N}_G$ is the nilpotent cone in $i\mathfrak{g}^*$. One checks via a simple linear algebra calculation that $\overline{q(\mathcal{N}_G)}=i\mathfrak{h}^*$. Thus, we obtain
$$\operatorname{AC}(\mathcal{O}\operatorname{-}\operatorname{supp}(\pi|_H))= i\mathfrak{h}^*.$$
One notes that the irreducible, tempered representations of $\operatorname{PSL}(2,\mathbb{R})$ are the irreducible, tempered representations of $\operatorname{SL}(2,\mathbb{R})$ in which the center of $\operatorname{SL}(2,\mathbb{R})$ acts trivially. These are the spherical unitary principal series and ``half'' of the holomorphic and antiholomorphic discrete series representations.
We then deduce that
\begin{itemize}
\item $\pi|_H$ contains an integral of spherical unitary principal series with unbounded support.
\item $\pi|_H$ contains infinitely many distinct holomorphic discrete series.
\item $\pi|_H$ contains infinitely many distinct antiholomorphic discrete series. 
\end{itemize}

The authors believe that the last two facts are non-trivial. For comparison, one can see utilizing arguments in \cite{OO} that whenever $\pi$ is a holomorphic discrete series of $G$, the restriction $\pi|_H$ contains at most finitely many holomorphic and antiholomorphic discrete series representations. 

Next, we recall the statement of Corollary \ref{resintro2}.  Suppose $G$ is a real, reductive algebraic group, $H\subset G$ is a closed reductive algebraic subgroup, and $\pi$ is a discrete series representation of $G$. Let $\mathfrak{g}$ (resp. $\mathfrak{h}$) denote the Lie algebra of $G$ (resp. $H$), and let $q\colon i\mathfrak{g}^*\rightarrow i\mathfrak{h}^*$ be the pullback of the inclusion. If $\pi|_H$ is a Hilbert space direct sum of irreducible representations of $H$, then 
$$q(\operatorname{WF}(\pi))\subset \overline{i\mathfrak{h}^*_{\text{ell}}}.$$
Here $i\mathfrak{h}^*_{\text{ell}}\subset i\mathfrak{h}^*$ denotes the subset of elliptic elements.

This statement follows from Corollary \ref{resintro1} together with the fact that only discrete series of $H$ can occur discretely in $\pi|_H$ when $\pi$ is a discrete series of $G$ (see Corollary 8.7 on page 131 of \cite{Ko5}) and the fact that discrete series correspond to elliptic caodjoint orbits \cite{R1}. 

To illustrate Corollary \ref{resintro2}, we consider tensor products of discrete series representations of $\operatorname{SL}(2,\mathbb{R})$. This particular example has been well understood for a long time (see \cite{Re}). We use it because it is simple and it illustrates our ideas well. 

The exterior tensor product $\sigma_n^+\boxtimes \sigma_m^+$ (resp. $\sigma_n^-\boxtimes \sigma_m^-$) corresponds to the product of orbits $\mathcal{O}_n^+\times \mathcal{O}_m^+$ (resp. $\mathcal{O}_n^-\times \mathcal{O}_m^-$) as a representation of $\operatorname{SL}(2,\mathbb{R})\times \operatorname{SL}(2,\mathbb{R})$. The projection 
$$i\operatorname{sl}(2,\mathbb{R})^*\oplus i\operatorname{sl}(2,\mathbb{R})^*\rightarrow i\operatorname{sl}(2,\mathbb{R})^*$$
is given by the sum $(\xi,\eta)\mapsto \xi+\eta$. One checks that
$$\mathcal{O}_n^++\mathcal{O}_m^+\subset (i\mathfrak{g}^*_{\text{ell}})^+,\ \ \mathcal{O}_n^-+\mathcal{O}_m^-\subset (i\mathfrak{g}_{\text{ell}}^*)^-.$$
In fact, $\sigma_n^+\otimes \sigma_m^+$ is a discrete sum of holomorphic discrete series and $\sigma_n^-\otimes \sigma_m^-$ is a discrete sum of antiholomorphic discrete series (see Theorem 1 and Example 5 of \cite{Re}). Therefore, the Corollary told us that these sums of orbits would be contained in the elliptic set.

On the other hand, the exterior tensor product $\sigma_n^+\boxtimes \sigma_m^-$ corresponds to the product of orbits $\mathcal{O}_n^+\times \mathcal{O}_m^-$. Their sum contains the set of hyperbolic elements $i\mathfrak{g}^*_{\text{hyp}}$. Utilizing the contrapositive of Corollary \ref{resintro2}, we deduce that $\sigma_n^+\otimes \sigma_m^-$ is not a discrete sum of irreducible representations. In fact, utilizing Corollary \ref{resintro1}, one deduces that it must contain an unbounded integral of unitary principal series. One checks that this is the case (see Theorem 2 and Example 5 of \cite{Re}). 

\subsection{Wave Front Sets and Harmonic Analysis}
\label{harmonic_analysis}

Next, we consider applications to harmonic analysis questions. Recall Corollary \ref{BKintro}. If $L^2(G/H)$ is weakly contained in the regular representation, then 
$$\operatorname{AC}(\mathcal{O}\operatorname{-}\operatorname{supp}L^2(G/H))\supset \overline{\operatorname{Ad}^*(G)\cdot i(\mathfrak{g}/\mathfrak{h})^*}.$$
We need several remarks on how to use this result. First, it will be helpful to introduce the following notation.
%The second question is how to deduce concrete facts about $\operatorname{supp}L^2(G/H)$ from the above expression. 
%For this, it is helpful to introduce the following notation. 
If $\mathfrak{h}\subset \mathfrak{g}$ is a Cartan subalgebra and $\pi$ is a representation of $G$ that is weakly contained in the regular representation, then we define
$$i\mathfrak{h}^*\operatorname{-}\operatorname{supp}\pi=\bigcup_{\sigma\in \operatorname{supp}\pi} (\mathcal{O}_{\sigma}\cap i\mathfrak{h}^*)\subset i\mathfrak{h}^*$$
We note that only irreducible, tempered representations with regular infinitesimal character contribute to $i\mathfrak{h}^*\operatorname{-}\operatorname{supp}\pi\cap (i\mathfrak{h}^*)'$, where $(i\mathfrak{h}^*)'$ denotes the set of regular elements in $i\mathfrak{h}^*$. Further, any irreducible, tempered representation with regular infinitesimal character contributes exactly one orbit of a real Weyl group in $i\mathfrak{h}^*$ for a Cartan subalgebra $\mathfrak{h}\subset \mathfrak{g}$, unique up to conjugacy by $G$.

We deduce from the above discussion that for $\pi$ weakly contained in the regular representation
$$\overline{\operatorname{AC}(\mathcal{O}\operatorname{-}\operatorname{supp}\pi)\cap (i\mathfrak{h}^*)'}\subset \operatorname{AC}(i\mathfrak{h}^*\operatorname{-}\operatorname{supp}\pi)\subset \operatorname{AC}(\mathcal{O}\operatorname{-}\operatorname{supp}\pi)\cap i\mathfrak{h}^*.$$
In particular, if $\operatorname{AC}(\mathcal{O}\operatorname{-}\operatorname{supp}\pi)=i\mathfrak{g}^*$, then 
$$\operatorname{AC}(i\mathfrak{h}^*\operatorname{-}\operatorname{supp}\pi)=i\mathfrak{h}^*$$ for every Cartan subalgebra $\mathfrak{h}\subset \mathfrak{g}$. The authors feel that this is ample justification for saying that $\operatorname{supp}\pi$ is \emph{asymptotically identical} to $\operatorname{supp}L^2(G)$ if 
$$\operatorname{AC}(\mathcal{O}\operatorname{-}\operatorname{supp}\pi)=i\mathfrak{g}^*.$$

Second, we recall the recent work of Benoist and Kobayashi \cite{BK}. Suppose $G$ is a real, reductive algebraic group, and suppose $H\subset G$ is a real, reductive algebraic subgroup. Let $\mathfrak{g}$ (resp. $\mathfrak{h}$) denote the Lie algebras of $G$ (resp. $H$). Let $\mathfrak{a}\subset \mathfrak{h}$ be a maximal split abelian subspace, and recall that we have Lie algebra maps $\mathfrak{a}\rightarrow \operatorname{End}(\mathfrak{h})$ and $\mathfrak{a}\rightarrow \operatorname{End}(\mathfrak{g})$ given by the adjoint actions. If $Y\in \mathfrak{a}$, define $\mathfrak{h}_{+,Y}$ (resp. $\mathfrak{g}_{+,Y}$) to be the sum of the positive eigenspaces for the adjoint action of $Y$ on $\mathfrak{h}$ (resp. $\mathfrak{g}$), and define
$$\rho_{\mathfrak{h}}(Y)=\operatorname{Tr}_{\mathfrak{h}_{+,Y}}(Y),\ \rho_{\mathfrak{g}}(Y)=\operatorname{Tr}_{\mathfrak{g}_{+,Y}}(Y).$$
In Theorem 4.1 of \cite{BK}, Benoist and Kobayashi show that $L^2(G/H)$ is weakly contained in the regular representation of $G$ if and only if
$$2\rho_{\mathfrak{h}}(Y)\leq \rho_{\mathfrak{g}}(Y)\ \text{for\ every}\ Y\in \mathfrak{a}.$$
Now, suppose $H\subset G$ are real, reductive algebraic groups satisfying the above condition. Then Corollary \ref{BKintro} implies
$$\operatorname{AC}(\mathcal{O}\operatorname{-}\operatorname{supp}L^2(G/H))\supset\overline{\operatorname{Ad}^*(G)\cdot i(\mathfrak{g}/\mathfrak{h})^*}.$$
We note that the right hand side is quite computable. Let $\mathfrak{q}$ be the orthogonal complement of $\mathfrak{h}$ with respect to a nondegenerate, invariant form (the Killing form will due if $G$ is simple). After dividing by $i$ and identifying $\mathfrak{g}^*$ with $\mathfrak{g}$ via this form, we need only ask ``which elements of $\mathfrak{g}$ are conjugate to elements of $\mathfrak{q}$'' in order to compute the right hand side of the above expression. In particular, the right hand side is $i\mathfrak{g}^*$ if $\mathfrak{q}$ contains representatives of every conjugacy class of Cartan subalgebra in $\mathfrak{g}$.

Benoist and Kobayashi give large families of examples of pairs $H\subset G$ satisfying their condition in Example 5.6 and Example 5.10 of \cite{BK}. We will focus on Example 5.6. We see that if $G=\operatorname{SO}(p,q)$ and $H=\prod_{i=1}^r \operatorname{SO}(p_i,q_i)$ with $p=\sum_{i=1}^r p_i$, $q=\sum_{i=1}^r q_i$, and $2(p_i+q_i)\leq p+q+2$ whenever $p_iq_i\neq 0$, then $L^2(G/H)$ is weakly contained in the regular representation. To the best of the authors' knowledge, Plancherel formulas are not known for the vast majority of these cases. One checks using parametrizations of conjugacy classes of Cartan subalgebras (see \cite{Kos}, \cite{Su}) and an explicit description of $\mathfrak{q}$, that if in addition, $2p_i\leq p+1$,$2q_i\leq q+1$ for every $i$ and $p+q>2$, then $$i\mathfrak{g}^*=\overline{\operatorname{Ad}^*(G)\cdot i(\mathfrak{g}/\mathfrak{h})^*}.$$
Utilizing Corollary \ref{BKintro}, we deduce $\operatorname{supp} L^2(G/H)$ is asymptotically equivalent to $\operatorname{supp}L^2(G)$. In particular, suppose $p$ and $q$ are not both odd and $\mathcal{F}$ is one of the families of discrete series of $G=\operatorname{SO}(p,q)$ corresponding to a fixed Weyl chamber. Under these assumptions, if $\mathfrak{h}_0$ is a compact Cartan subalgebra of $\mathfrak{g}$, then we observe
$$\operatorname{AC}(i\mathfrak{h}_0^*\operatorname{-}\operatorname{supp}L^2(G/H))=i\mathfrak{h}_0^*.$$
In particular, we deduce that for every family $\mathcal{F}$ of discrete series of $G$,
$$\operatorname{Hom}_G(\sigma,L^2(G/H))\neq \{0\}$$
for infinitely many different $\sigma\in \mathcal{F}$.
A particularly nice example is when $G=\operatorname{SO}(4n,2)$ and $H=\operatorname{SO}(n,1)\times\operatorname{SO}(n,1)\times \operatorname{SO}(2n)$. In this case, one deduces
$$\operatorname{Hom}_G(\sigma,L^2(G/H))\neq \{0\}$$
for infinitely many distinct (possibly vector valued) holomorphic discrete series $\sigma$ of $G$. We note that when $n=1$, this statement can be deduced from Theorem 7.5 on page 126 of Kobayashi's paper \cite{Ko5}. 

\section{Acknowledgements}
The authors would like to thank Todd Quinto for providing us with advice and references for fundamental facts in microlocal analysis. The authors would also like to thank David Vogan for a few comments on a previous draft.

\bibliographystyle{amsplain}

\end{document}